\theoremstyle{plain}
\newtheorem{thm}{Theorem}[section]
\theoremstyle{plain}
\newtheorem{lem}[thm]{Lemma}
\newtheorem{prop}[thm]{Proposition}
\newtheorem{cor}[thm]{Corollary}
\theoremstyle{definition}
\newtheorem{defi}[thm]{Definition}
\numberwithin{equation}{section}
\newcommand{\lb}{\langle}
\newcommand{\rb}{\rangle}
\newcommand\addbnew[1]{{\color{black} #1}}
\def\epsilon{\varepsilon}
\def\v{\mathbf{v}}
\def\u{\mathbf{u}}
\def\Hh{\mathbb{H}^1}
\def\R3{\mathbb{R}^3}
\def\RN{\mathbb{R}^N}
\def\L{\mathbb{L}^2}
\newcommand{\eps}{\varepsilon}
\newcommand{\A}{\mathcal{A}}
\newcommand{\Rr}{\mathbb{R}}
\newcommand{\Na}{\mathbb{N}}
\newcommand{\M}{\mathbf{M}}
\def\e{{\text{e}}}
\numberwithin{equation}{section} \allowdisplaybreaks
\newcommand{\rA}{\mathrm{ A}}
\newcommand{\rV}{\mathrm{ V}}
\begin{document}

\title[LLGEs: controllability by Low Modes Forcing]{Landau-Lifshitz-Gilbert equations: Controllability by Low Modes Forcing for deterministic version and Support Theorems for Stochastic version}

\author[Mrinmay Biswas]{Mrinmay Biswas}

\address{%
   Department of Mathematics,
	Montanuniversit\"at Leoben,
	Austria.}
\email{mrinmay.biswas@unileoben.ac.at}

\author{Erika Hausenblas}
   \address{%
   Department of Mathematics,
	Montanuniversit\"at Leoben,
	Austria.}
\email{erika.hausenblas@unileoben.ac.at}

\author[Debopriya Mukherjee]{Debopriya Mukherjee}

\address{%
   Department of Mathematics,
	Montanuniversit\"at Leoben,
	Austria.}
\email{debopriya.mukherjee@unileoben.ac.at}

\thanks{The second author of the paper is supported by Austrian Science
	Foundation, project number P 32295. The third author is supported by
	Marie Sk{\l}odowska-Curie Individual Fellowships H2020-MSCA-IF-2020,
	888255. 
}

\date{\today}

\begin{abstract}
In this article, we study the controllability issues of the Landau-Lifshitz-Gilbert Equations (LLGEs), accompanied with non-zero exchange energy only, in an interval in one spatial dimension with Neumann boundary conditions. The paper is of twofold. In the first part of the paper, we study the controllability issues of the LLGEs. The control force acting here is degenerate i.e., it acts through a few numbers of low mode frequencies. We exploit the Fourier series expansion of the solution. We borrow methods of differential geometric control theory (Lie bracket generating property) to establish the global controllability of the finite-dimensional Galerkin approximations of LLGEs. We show $L^2$ approximate controllability of the full system. In the second part, we consider the LLGEs with lower-dimensional degenerate random forcing (finite-dimensional Brownian motions) and study support theorems.

\end{abstract}


\maketitle

\textbf{Keywords and phrases:} {Landau-Lifshitz-Gilbert equations, controllability, geometric control, Lie algebraic control.}

\textbf{AMS subject classification (2020):} {Primary 93Cxx, 82D40; Secondary 60H10, 60H15.}

\section{Introduction}
Let $\mathcal{O}\subset\mathbb R^d \, (1 \leq d \leq 3)$ be an open bounded region and $\mathbb{S}^2$ represents the two dimensional unit sphere in $\R3$. The magnetic behavior of any ferromagnetic material is determined by the special property called magnetization $\M:[0,\infty) \times \mathcal{O} \rightarrow \mathbb{S}^2$, with the assumption that the material is saturated at the initial time, i.e.,
\begin{align} \label{Mnorm1}
|\M_0(x)|_{\R3}=1 \quad \mbox{for a.e.}\,\, x \in  \mathcal{O}.
\end{align}
The scientific theory illustrating the magnetic behavior in ferromagnetic materials has been initiated by Weiss \cite{Weiss_1907}, and further developed by Landau and Lifshitz \cite{LanLif} and Gilbert \cite{Gil}. The magnetization $\M$ at elevated temperature which is below the critical temperature (called Curie temperature) satisfies 
the following Landau-Lifshitz-Gilbert Equations (LLGEs):
\begin{align} \label{eq.M}
\frac{\partial \M}{\partial t}(t, x) =\mu_1 \M(t,x) \times H_{eff}(t,x) -\mu_2 \M(t,x) \times (\M(t,x) \times H_{eff}(t,x)),\,\,t>0,\,\, x \in \mathcal{O},
\end{align}
where $\times$ is the vector cross product in $\R3$, and the parameters $\mu_1 \neq 0,\mu_2>0$ are constants. If $\mathcal{E}$ is the total energy of the magnetic energy functional, then, $$H_{eff}= H_{eff}(\M): = - \nabla_{\M} \mathcal{E}(\M)$$ is the so-called effective field which acts on spins in the ensemble. This total energy $\mathcal{E}$ is the sum of the exchange energy $\mathcal{E}_{ex}(\M)$, anisotropy energy $\mathcal{E}_{an}(\M)$,  and other energies, see Visintin \cite{Visintin_1985}.  
 There are evidences that a large part of the mathematical difficulty of the
problem seems to stem from the exchange energy. In such situation, when the energy
functional consists of the exchange energy only,
$\mathcal{E}=\frac{1}{2} \int_\mathcal{O} | \nabla\M(x)|^2 \,dx,$ we have $H_{eff} = \Delta\M$
and we obtain the following version of the LLGEs:
\begin{equation} \label{M1}
\left\{\begin{array}{ll}
\frac{\partial \M}{\partial t}(t,x) &=\mu_1 \M(t,x) \times \Delta\M(t,x) -\mu_2 \M(t,x) \times (\M(t,x) \times \Delta\M(t,x)),\,\, t>0,\,\, x\in \mathcal{O}, \\
\frac{\partial \M}{\partial \nu}(t,x) & =0,\,\, t>0,\,\, x\in\partial \mathcal{O},\\
\M(0,x)&=\M_0(x),\,\, x \in \mathcal{O}.
\end{array}\right.
\end{equation}

If both the exchange and anisotropy energies are present in the system, the total energy $\mathcal{E}$ of the LLGEs is of the following form
$$\mathcal{E}(\M) = \mathcal{E}_{an}(\M) + \mathcal{E}_{ex}(\M) =\int_\mathcal{O} \Big(\psi(\M(x))+\dfrac{1}{2} | \nabla\M(x)|^2 \Big)\,dx$$ where $\mathcal{E}_{an}(\M) :=\int_\mathcal{O} \psi(\M(x))\, dx$ represents the anisotropy energy, the energy which is directionally dependent. The exchange energy is given by $\mathcal{E}_{ex}(\M) := \int_\mathcal{O} |\nabla\M(x)|^2 dx$,
and the effective field $H_{eff}$ takes the form $\Delta\M - \nabla\psi(\M).$
\par
The LLGEs describe the behavior of the magnetic property of the ferromagnetic materials. These materials are used in electro-magnetic storage devices such as memory cards, computer hard disks, debit and credit cards, audio-video tape recordings, large scientific computing devices, etc. to store digital data contents. Each data content is stored in a memory device as a uniquely assigned stable magnetic state of the ferromagnet. For the computational purpose, it is desirable to move from one stable state to another stable state by using minimum requirements as well as not affecting the data which is already stored in it. The magnetic state of a ferromagnet can be changed by changing the magnetic field, $H_{eff},$ which is viewed as the control (Carbou et al. \cite{Car1, Car2, Car3}, Alouges-Beauchard \cite{AB}, Noh 
et al. \cite{Noh}).
According to the author's knowledge, there are only a few results on the control of magnetization described by the LLGEs. From the control-theoretic point of view, most of the articles are either experimental or they are numerically simplified. In Alouges-Beauchard \cite{AB}, the controlled LLGEs are considered on a
special domain which allows the spatial variable to be fixed, and hence the control problem simplifies to an ordinary differential equation. Experiments demonstrating the control of domain walls in a nano-wire are presented in Noh et al. \cite{Noh}. Numerical simulations have also been conducted
on the control of domain walls in nanowires (Wieser et al. \cite{Wies}).

In this article, we study the controllability properties of the LLGEs in the one-dimensional spatial domain $\mathcal{O}=(0,2\pi)$ by means of low mode forcing. We consider the following deterministic controlled LLGEs
\begin{align}\label{M1.N} 
\left\{\begin{array}{ll} 
\M_t(t,x) & =\mu_1 \M(t,x) \times \M_{xx}(t,x) -\mu_2 \M(t,x) \times (\M(t,x) \times \M_{xx}(t,x))+ \M(t,x) \times\v(t,x),  \\
& \qquad \qquad\qquad\qquad\qquad\qquad\qquad\qquad \qquad\qquad\qquad t\in (0,T],\,\, x\in (0,2\pi), \\
\M_x(t,0) & =0=\M_x(t,2\pi),\,\, t\in (0,T],  \\
\M(0,x) & =\M_0(x),\,\,x \in(0,2\pi),
\end{array}\right.
\end{align}
with  
\begin{equation*}
\M_0\in\mathbb{H}^1(0,2\pi;\mathbb{S}^2):=\Bigl\{\M \in \mathrm{H}^1(0,2\pi; \mathbb R^3) \mbox{ such that }|\M(x)|_{\R3}=1  \mbox{ for a.a. } x \in (0,2\pi)  \Bigr\},
\end{equation*}
where $\v$ is the control at our disposal. We are interested to show the controllability of the above nonlinear system by using a control as degenerate forcing. Here, degenerate means $\v$ lies in a finite dimensional subspace $\hbox{span}\{\varphi_ke_j\,:\, (k,j)\in\mathcal{K}^1\}$ for some finite subset $\mathcal{K}^1\subset \mathbb{N}_0\times \{1,2,3\},$ i.e., $\v$ 
is acting upon the system through the finite number of Fourier modes with possibly lower frequencies.
\par 
The paper is twofold. Firstly, we aim to establish the global controllability of the Galerkin approximated system of \eqref{M1.N}. Next, we prove $\mathbb{L}^2$-approximate controllability of the 
system \eqref{M1.N}. In the second part of the paper, we consider the system \eqref{M1.N} with lower-dimensional degenerate random forcing (finite-dimensional Brownian motions) and study support theorems.
\par
\subsection{Controllability of deterministic LLGEs  by low mode forcing}
We shall study the following two controllability aspects:
\begin{enumerate}[(i)]
\item
Global controllability of Galerkin approximation system (finite dimensional);
\item
$L^2$-approximate controllability of the original system (infinite dimensional).
\end{enumerate} 
In system \eqref{M1.N}, control $\v$ is degenerate, i.e., $\v$ is a finite sum of harmonics. In particular, $\v$ is of the form (see \cite{Agra_Sary.3} and references cited therein) 
\begin{align}\label{control.1}
\v(t,x)=\sum_{(k,j)\in \mathcal{K}^1}v_k^j(t)\varphi_k(x) e_j,\quad t\in [0,T],\,\,x\in [0,2\pi],
\end{align}
where $\{\varphi_k: k \in \mathbb N\}$ are eigenfunctions of the Neumann Laplacian in $L^2(0,2\pi),$ 
$\mathcal{K}^1\subset \mathbb{N}_0\times \{1,2,3\}$ is finite subset and 
$v_k^j \in L^\infty(0,T)$ are control functions.
\par
Consider any subset $\mathcal{G} \subset \mathbb{N}_0 \times \{1,2,3\}$ and introduce the Galerkin $\mathcal{G}$-approximation of the system \eqref{M1.N} by projecting this equation onto the linear space spanned by the
harmonics $\{\varphi_k(x)e_j\} $ with $k,j \in \mathbb{N}_0 \times \{1,2,3\}$. It corresponds to keeping in the system \eqref{M1.N} only the equations for the variables $v_k^j$ with $k \in \mathcal{G}$ and changing the condition $k,j \in \mathbb{N}_0 \times \{1,2,3\}$ to $k,j \in \mathcal{G}$.
We now provide formal definition of controllability and approximate controllability.
\begin{defi}
A Galerkin $\mathcal{G}$-approximation of the system \eqref{M1.N} is globally controllable if for any two points $\mathbf{m}_0, \mathbf{m}_1\in \hbox{span}\{\varphi_k e_j: (k,j) \in \mathcal{G}\}$, there exists $T>0$ and a control $\v$ of the form \eqref{control.1} which steers the solution of the Galerkin $\mathcal{G}$-approximation from 
$\mathbf{m}_0$ to $ \mathbf{m}_1$ in time $T$. The Galerkin $\mathcal{G}$-approximation of the system  \eqref{M1.N} is time-$T$ globally controllable if $T$ can be chosen the same for all $\mathbf{m}_0, \mathbf{m}_1\in \hbox{span}\{\varphi_k e_j: (k,j) \in \mathcal{G}\}$.
\end{defi}

We will consider a Galerkin $\mathcal{G}$-approximation of the system \eqref{M1.N} which is the system \eqref{inf.sys.g1}-\eqref{inf.sys.g3} given in Section \ref{cont.5}.

\begin{thm}\label{glo.cont.1}
Let $T > 0$. Then, the Galerkin $\mathcal{G}$-approximation \eqref{inf.sys.g1}-\eqref{inf.sys.g3}
 of the system \eqref{M1.N} is time-$T$ globally controllable.
\end{thm}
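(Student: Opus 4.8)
The plan is to prove time-$T$ global controllability of the Galerkin $\mathcal{G}$-approximation by invoking the classical geometric control machinery for control-affine systems on compact manifolds, following the Agrachev--Sarychev approach. First I would write the Galerkin system \eqref{inf.sys.g1}-\eqref{inf.sys.g3} in the control-affine form $\dot{\ml} = f_0(\ml) + \sum_{(k,j)\in\mathcal{K}^1} v_k^j(t)\, g_{k,j}(\ml)$, where $f_0$ is the drift coming from the projected LLG nonlinearity (the $\mu_1 \M\times\M_{xx}$ and $-\mu_2\M\times(\M\times\M_{xx})$ terms) and the control vector fields $g_{k,j}(\ml) = \ml\times(\varphi_k e_j)$ come from the degenerate forcing $\M\times\v$. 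The crucial structural observation is that each $g_{k,j}$ is tangent to the constraint set: since the cross product with $\ml$ preserves $|\ml|$ pointwise, the flow stays on the (finite-dimensional, compact, connected) manifold $\mathcal{M}_{\mathcal{G}}$ of configurations in $\mathrm{span}\{\varphi_k e_j : (k,j)\in\mathcal{G}\}$ satisfying the saturation constraint; one should check the drift $f_0$ is tangent too (this is the standard fact that the LLG flow preserves $|\M|$, which underlies the well-posedness on $\mathbb{H}^1(0,2\pi;\mathbb{S}^2)$).

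The key steps, in order, are: (1) verify the manifold $\mathcal{M}_{\mathcal{G}}$ is compact and connected, so that the Rashevsky--Chow / Krener-type results apply and, more importantly, so that controllability plus the orbit theorem will upgrade to the whole manifold; (2) establish the Lie algebra rank condition (bracket-generating / Hörmander-type condition) for the family $\{f_0, g_{k,j} : (k,j)\in\mathcal{K}^1\}$ at every point of $\mathcal{M}_{\mathcal{G}}$ — this is where the ``low modes forcing'' is essential, because iterated Lie brackets of the $g_{k,j}$ with the quadratic/cubic drift $f_0$ generate forcing in the higher Fourier modes that were not directly actuated, and one computes these brackets explicitly using the product formulas for $\varphi_k\varphi_\ell$ (products of cosines are sums of cosines) so that saturating the set $\mathcal{K}^1$ eventually reaches all of $\mathcal{G}$; (3) conclude global controllability via the fact that on a compact manifold, the Lie algebra rank condition together with some recurrence/Poisson-stability property of the drift (or the fact that $f_0$ itself is in the span of controlled directions up to brackets) yields controllability — and then handle the ``time-$T$'' refinement by a time-reparametrization or scaling argument, exploiting that the controls $v_k^j$ are unbounded in $L^\infty(0,T)$, so any trajectory realizable in some time can be realized in time $T$ by speeding up.

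For the time-$T$ uniformity specifically, I would argue as follows: because the controls are not amplitude-constrained, given any admissible trajectory on $[0,T_1]$ steering $\ml_0$ to $\ml_1$, the time-rescaled control $\tilde{v}_k^j(s) = \frac{T_1}{T} v_k^j(\frac{T_1}{T}s)$ produces the reparametrized trajectory on $[0,T]$ — but one must be careful that the drift $f_0$ does not rescale the same way, so instead the cleaner route is to show that the reachable set from $\ml_0$ in time exactly $T$ is open (by the rank condition and the orbit theorem, since the drift's own flow keeps one in the closure), has nonempty interior, and by compactness-connectedness of $\mathcal{M}_{\mathcal{G}}$ together with a standard connectedness argument on the equivalence classes ``reachable in time $T$'', one gets it equals all of $\mathcal{M}_{\mathcal{G}}$; alternatively one absorbs the drift by noting $-f_0$ is also reachable as a limit of controlled directions (a saturation argument à la Agrachev--Sarychev), which kills the time-direction asymmetry entirely.

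I expect the main obstacle to be step (2): carefully organizing the iterated Lie bracket computation so that, starting from the finitely many actuated modes in $\mathcal{K}^1$, the brackets with the drift $f_0$ genuinely span the full tangent space $T_{\ml}\mathcal{M}_{\mathcal{G}}$ at every point, without degeneracies. The bookkeeping is delicate because $f_0$ is not linear — it mixes modes quadratically (from $\M\times\M_{xx}$) and cubically (from the double cross product) — so the brackets produce a proliferating family of vector fields whose components are polynomials in the Fourier coefficients, and one must exhibit a specific finite sequence of brackets that, via the trigonometric product identities, reaches every target mode $e_j\varphi_k$ with $(k,j)\in\mathcal{G}$; showing this ``saturation'' terminates and is nondegenerate pointwise (not just generically) is the technical heart of the proof.
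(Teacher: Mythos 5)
Your general framework (control-affine form, Lie brackets, Chow--Rashevskii) is the one the paper uses, but your plan misplaces the decisive computation and therefore has a genuine gap. The heart of the paper's argument (Lemma \ref{lie_brac}) is that the drift plays \emph{no} role in the bracket computation: the control vector fields ${\bf f}^{k,l}(\ml)$, being the projections of $\ml\mapsto\ml\times(\varphi_k e_l)$, are linear in the state, and their \emph{mutual} brackets close up into the same family with shifted mode indices, $[{\bf f}^{p,1},{\bf f}^{q,2}]={\bf f}^{|p-q|,3}+{\bf f}^{p+q,3}$ and cyclic permutations (see \eqref{lie1}--\eqref{lie3}), because $e_a\times e_b=e_c$ and $\cos(px)\cos(qx)=\tfrac12\bigl(\cos((p+q)x)+\cos(|p-q|x)\bigr)$. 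Starting from $\mathcal{K}^1=\{(0,1),(0,2),(1,1)\}$ these identities generate every ${\bf f}^{k,l}$ with $(k,l)\in\mathcal{G}_K$, so the family of controlled directions is already bracket generating and symmetric, and Chow--Rashevskii then gives exact time-$T$ controllability directly. Your step (2) instead proposes to saturate the unactuated modes by bracketing the controls with the quadratic/cubic drift $f_0$ --- the Agrachev--Sarychev route for Navier--Stokes, where the control fields are \emph{constant} and their mutual brackets vanish, so bracketing with the drift is forced. Here it is not needed; you do not carry out the ``proliferating'' polynomial bracket computation that you yourself identify as the technical heart; and your step (3) machinery (Poisson stability of the drift, absorbing $-f_0$ by saturation, openness of time-$T$ reachable sets) exists only to compensate for not noticing that the controls alone do the job. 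As written, the proposal is a plan whose central step is both unexecuted and aimed in a harder direction than necessary.

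A secondary point: the invariant manifold you set up is not the right one for the truncated system. The Galerkin projection destroys the pointwise saturation constraint $|\M(x)|_{\R3}=1$ (a finite cosine sum of a unit-length field is not unit length), which is why the paper formulates controllability on the linear span $S_K$ rather than on a saturation manifold. What the projected control fields do preserve is the $\mathbb{L}^2$ norm of $\ml$ (they are skew-symmetric linear fields on $S_K$), which is the relevant conserved quantity to keep in view when checking where the rank condition can hold; it is a different constraint from the one your step (1) verifies.
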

Theorem \ref{glo.cont.1} provides the result of global exact controllability of the Galerkin $\mathcal{G}$-approximation of the system \eqref{M1.N}. Since, Galerkin $\mathcal{G}$-approximation is a finite dimensional approximation of the main system \eqref{M1.N}, a natural question arises about the controllability of the original system \eqref{M1.N}. We prove that the system \eqref{M1.N} is approximately controllable in $\mathbb{L}^2$-topology.
\begin{defi}[$\mathbb{L}^2$-approximately controllability]
The system \eqref{M1.N} is globally $\mathbb{L}^2$-approximately controllable, if for any $\M_0,\M_1 \in \mathbb{H}^1(0,2\pi;\mathbb{S}^2)$ and any $\epsilon>0$, there exists a $T > 0$ and a control $\v \in \mathbb{L}^2$ which steers the system \eqref{M1.N} from $\M$ to the $\epsilon$ neighbourhood of $\M_1$ in the $\mathbb{L}^2$ norm in time $T$.
\end{defi}
We have the following $\mathbb{L}^2$-approximate controllability
results.
\begin{thm}\label{L2.app.cont.1}
Let $T > 0$. Then, the system  \eqref{M1.N}
is time-$T$ globally $\mathbb{L}^2$-approximate controllable.
\end{thm}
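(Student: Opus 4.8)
The plan is to deduce approximate controllability of the full PDE \eqref{M1.N} from the exact controllability of its Galerkin truncations (Theorem \ref{glo.cont.1}) by a standard three-step density/continuity argument. Fix $\M_0,\M_1\in\mathbb{H}^1(0,2\pi;\mathbb{S}^2)$ and $\epsilon>0$. First, I would show that the $\mathbb{S}^2$-valued target $\M_1$ can be approximated, in the $\mathbb{L}^2$-norm, by the orthogonal (Fourier) projection onto $\hbox{span}\{\varphi_k e_j:(k,j)\in\mathcal{G}_n\}$ for a sufficiently large finite set $\mathcal{G}_n\supset\mathcal{K}^1$; since $\M_1\in\mathbb{H}^1\subset\mathbb{L}^2$, its Fourier series converges in $\mathbb{L}^2$, so there is $n$ with $\|\M_1-P_n\M_1\|_{\mathbb{L}^2}<\epsilon/3$. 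The same truncation applied to $\M_0$ gives $P_n\M_0$, but here one must be slightly careful: $P_n\M_0$ is a legitimate initial datum for the Galerkin $\mathcal{G}_n$-system only after projection, and one controls the error $\|\M_0-P_n\M_0\|_{\mathbb{L}^2}$ in the same way.

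Second, I would invoke Theorem \ref{glo.cont.1}: the Galerkin $\mathcal{G}_n$-approximation is (time-$T$) globally controllable, so there is a control $\v_n$ of the form \eqref{control.1} (supported on the low modes $\mathcal{K}^1\subset\mathcal{G}_n$) steering the finite-dimensional system from $P_n\M_0$ to $P_n\M_1$ exactly in time $T$. Denote by $\ml_n(t)$ the resulting Galerkin trajectory. Third — and this is the analytic heart of the argument — I would compare the solution $\M(t)$ of the \emph{full} system \eqref{M1.N} driven by this same control $\v_n$, started from the \emph{true} datum $\M_0$, with the Galerkin trajectory $\ml_n(t)$. Writing the equation for the difference, testing against $\M-\ml_n$, and using the antisymmetry of the cross product together with the parabolic regularization coming from the $-\mu_2\,\M\times(\M\times\M_{xx})$ term (a damping term, $\mu_2>0$), one obtains a Gronwall-type estimate showing $\|\M(T)-\ml_n(T)\|_{\mathbb{L}^2}\le C(T)\,r_n$, where $r_n\to0$ is the truncation error of the initial data together with the Galerkin-projection defect in the nonlinearity. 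Choosing $n$ large enough makes this last quantity $<\epsilon/3$, and the triangle inequality $\|\M(T)-\M_1\|_{\mathbb{L}^2}\le\|\M(T)-\ml_n(T)\|+\|\ml_n(T)-P_n\M_1\|+\|P_n\M_1-\M_1\|<\epsilon$ closes the proof; the time $T>0$ is the one fixed at the outset, so the controllability is genuinely time-$T$.

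The main obstacle is the third step: the LLGE nonlinearity is quasilinear and the ``extra'' term $\M\times(\M\times\M_{xx})$ contains the top-order operator, so one must argue that the a priori bounds used for Gronwall (in particular $\mathbb{L}^\infty_t\mathbb{H}^1_x\cap\mathbb{L}^2_t\mathbb{H}^2_x$ estimates, and the preservation of $|\M|=1$) hold uniformly in $n$ along the controlled trajectories; the saturation constraint $|\M|=1$ is what keeps these energy estimates closed. I expect these a priori estimates to be either quoted from the well-posedness theory for \eqref{M1.N} established earlier in the paper or reproduced by the usual energy method, after which the difference estimate is routine. One should also note that the controls $\v_n$ are uniformly bounded in $\mathbb{L}^2((0,T)\times(0,2\pi))$ — indeed they live in a \emph{fixed} finite-dimensional space spanned by $\{\varphi_k e_j:(k,j)\in\mathcal{K}^1\}$ — so no compactness issue arises on the forcing side, and the constant $C(T)$ in the Gronwall bound can be taken independent of $n$.
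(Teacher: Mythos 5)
Your overall strategy differs from the paper's, and the difference matters: the paper does \emph{not} compare the full solution with the Galerkin trajectory over a fixed time $T$. Instead it fixes $K$ so that $|\Pi_K\M_i-\M_i|_{\mathbb{L}^2}\le\epsilon/3$, uses Theorem \ref{glo.cont.1} to steer the low-mode component from $\Pi_K\M_0$ to $\Pi_K\M_1$ in a time $T_1$ that is chosen \emph{arbitrarily small}, and then only needs a crude bound on the high-mode component: an energy estimate (using the cross-product identities, the saturation condition and the a priori $\mathbb{H}^1$ bound \eqref{exist.mw}) gives $\frac{d}{dt}|\mathbf{m}^\perp(t)|_{\mathbb{L}^2}^2\le C$, hence $|\mathbf{m}^\perp(T_1)|_{\mathbb{L}^2}^2\le|(I-\Pi_K)\M_0|_{\mathbb{L}^2}^2+CT_1$, which is $\le 2\epsilon/3$ for $T_1$ small. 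No convergence of Galerkin approximations to the full solution is ever invoked.

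Your third step, by contrast, contains a genuine gap. You need $\|\M(T)-\mathbf{m}_n(T)\|_{\mathbb{L}^2}\le C(T)\,r_n$ with $r_n\to0$ and $C(T)$ \emph{uniform in $n$}, but the constant in any Gronwall estimate for the difference depends on the controlled trajectories, hence on the controls $\v_n$. Your claim that the $\v_n$ are uniformly bounded in $\mathbb{L}^2$ ``because they live in a fixed finite-dimensional space'' is not valid: membership in a fixed finite-dimensional span bounds nothing about the size of the coefficients, and the Chow--Rashevskii theorem is purely qualitative --- it gives existence of a steering control with no norm estimate, and the controls needed to connect $P_n\M_0$ to $P_n\M_1$ may grow with $n$. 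Moreover the ``Galerkin-projection defect in the nonlinearity'' involves $(I-P_n)$ applied to terms containing $\M_{xx}$, so making it small requires uniform-in-$n$ higher regularity of the controlled trajectories, which is not available from the well-posedness theory quoted in the paper. Without these uniformity statements the triangle inequality does not close. The paper's small-time trick is precisely the device that avoids having to prove any such uniform Galerkin convergence; note also that the price it pays is that the steering time is small rather than the prescribed $T$, consistent with the ``there exists $T>0$'' formulation in the definition of $\mathbb{L}^2$-approximate controllability.
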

We will prove Theorem \ref{glo.cont.1} and Theorem \ref{L2.app.cont.1} in Section \ref{cont.5}. However, we briefly outline the sketch of the main ideas of the proof below. To prove the Galerkin $\mathcal{G}$-approximated system \eqref{inf.sys.g1}-\eqref{inf.sys.g3} is globally exact controllable, we apply Chow-Rashevskii Theorem (see Theorem \ref{ChowR} in Appendix \ref{app geo cont}). To do so, we need to find an appropriate set $\mathcal{K}^1$ with minimum number of control modes such that the control vector fields have Lie bracket generating property. 
\par 
To prove Theorem \ref{L2.app.cont.1}, we first decompose the original system into two parts. The first one is a finite-dimensional system where the dimension corresponds to a large but finite number of Fourier modes and the rest is an infinite-dimensional system that corresponds to the remaining Fourier modes. Now, by Theorem \ref{glo.cont.1}, the finite-dimensional system is globally exact controllable. Using apriori $\mathbb{L}^2$ estimate of the solution of the system \eqref{M1.N}  and Theorem \ref{glo.cont.1}, we prove that the system \eqref{M1.N} is  $\mathbb{L}^2$-approximately controllable.
\par
Next, we consider the 
stochastic LLGEs system \eqref{sM1.N}. By the above controllability results of a deterministic system and an application of Girsanov transformation, we prove a support theorem for the system \eqref{sM1.N}.
\subsection{Support theorems for stochastic LLGEs}
In this article, we study small ball probabilities for the degenerate (forcing) noise term of the stochastic LLGEs. Let $(\Omega, \mathcal{F}, (\mathcal{F}_t)_{t\geq 0}, \mathbb{P})$ be a filtered probability space where the filtration $(\mathcal{F}_t)_{t\geq 0}$ satisfies the usual conditions i.e.,
\begin{itemize}
\item[(i)] $\mathbb{P}$ is complete on $(\Omega, \mathcal{F})$,
\item[(ii)] for each $t\geq 0$, $\mathcal{F}_t$ contains all $(\mathcal{F},\mathbb{P})$-null sets and
 \item[(iii)] the filtration $(\mathcal{F}_t)_{t\geq 0}$ is right-continuous.
  \end{itemize}
We consider the following system
\begin{align} \label{sM1.N}
\left\{\begin{array}{ll} 
d\M(t,x) & =(\mu_1 \M(t,x) \times \M_{xx}(t,x) -\mu_2 \M(t,x) \times (\M(t,x) \times \M_{xx}(t,x)))dt   \\
& \qquad \qquad\qquad\qquad +  \sum_{(k,j)\in \mathcal{K}^1}\left(\M(t,x) \times (\varphi_k(x)  e_j)\right)\circ d\beta_k^j(t), \quad t\in (0,T],\,\, x\in (0,2\pi),  \\
\M_x(t,0) & =0=\M_x(t,2\pi),\,\, t\in (0,T],  \\
\M(0,x) & =\M_0(x),\,\,x \in(0,2\pi) ,
\end{array}\right.
\end{align}
with Neumann boundary condition, 
$\M_0\in \mathbb{H}^1(0,2\pi;\mathbb{S}^2)$, 
$\mathcal{K}^1\subset \mathbb{N}_0\times \{1,2,3\}$ is finite subset and 
$\beta_k^j(t)$ are independent real valued Brownian motions with respect to the filtration $(\mathcal{F}_t)_{t\geq 0}$. 
Our main results in this directions are the following:

\begin{thm}\label{s.glo.cont.1}
 Let $T > 0$ and $K\in\Na_0$. Let us consider the set $$
 \mathcal{G}_K:=\{(k,l)\,|\,k\leq K,\, l=1,2,3\}\subset \mathbb{N}_0\times \{1,2,3\}
\quad{and} \quad  
S_K:=\mbox{span}\{\varphi_ie_j|\,
(i,j)\in\mathcal{G}_K\}\subset \Hh.$$ 
Then, for any $R, \eps>0$, and, for any two points $\mathbf{m}_0, \mathbf{m}_1\in S_K$, there exists $\delta>0$ such that the following holds:
\newline
 for any $\widetilde{\mathbf{m}}_0 \in 
\Big\{ \mathbf{m}: |\mathbf{m}- \mathbf{m}_0|_K \leq R \Big\}
$, let $\mathbf{m}(\cdot,\widetilde{\mathbf{m}}_0)$ be the solution of the Galerkin $\mathcal{G}_K$-approximation \eqref{s.inf.sys.g1}-\eqref{s.inf.sys.g3} with $\mathbf{m}(0,\widetilde{\mathbf{m}}_0)=\widetilde{\mathbf{m}}_0$ being the initial condition. Then, there holds
$$\mathbb{P}\Big(|\mathbf{m}(T,\widetilde{\mathbf{m}}_0)-\mathbf{m}_1|_K \leq \eps\Big)  >\delta. $$
\end{thm}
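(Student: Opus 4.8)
The plan is to deduce this support-type estimate from the exact controllability of the Galerkin approximation (Theorem~\ref{glo.cont.1}) by means of a Girsanov transformation together with a Wiener small--ball estimate, and then to upgrade the resulting pointwise positivity to a bound uniform over the compact ball $B_R:=\{\mathbf m\in S_K:|\mathbf m-\mathbf m_0|_K\le R\}$ by a finite covering. The fact that makes everything fit together is structural: in \eqref{sM1.N} the noise acts through the vector fields $V_{kj}(\mathbf m):=\mathbf m\times(\varphi_ke_j)$, $(k,j)\in\mathcal K^1$, which are precisely the directions through which a control \eqref{control.1} acts, since $\mathbf m\times\mathbf v=\sum_{(k,j)\in\mathcal K^1}v_k^j\,V_{kj}(\mathbf m)$; thus the $\mathcal G_K$--Galerkin SDE \eqref{s.inf.sys.g1}--\eqref{s.inf.sys.g3} is obtained from the controlled $\mathcal G_K$--Galerkin ODE \eqref{inf.sys.g1}--\eqref{inf.sys.g3} by replacing the deterministic control with the Stratonovich Brownian increments.

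First I would fix $\widetilde{\mathbf m}_0^{\ast}\in B_R$ and use Theorem~\ref{glo.cont.1}, applied to the $\mathcal G_K$--approximation, to obtain a control $\mathbf v^{\ast}=\sum_{(k,j)\in\mathcal K^1}v_k^j\varphi_ke_j$ with $v_k^j\in L^\infty(0,T)$ steering the deterministic Galerkin system from $\widetilde{\mathbf m}_0^{\ast}$ to $\mathbf m_1$ in time $T$. Writing $\mathbf y(\cdot\,;\mathbf z)$ for the $\mathbf v^{\ast}$--controlled trajectory started at $\mathbf z\in S_K$ (so $\mathbf y(T;\widetilde{\mathbf m}_0^{\ast})=\mathbf m_1$), continuous dependence of the smooth Galerkin flow on the initial datum, with $\mathbf v^{\ast}$ fixed and bounded, provides an open neighbourhood $U^{\ast}\ni\widetilde{\mathbf m}_0^{\ast}$ in $S_K$ with $|\mathbf y(T;\mathbf z)-\mathbf m_1|_K\le\eps/2$ for all $\mathbf z\in U^{\ast}$.

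Next, for $\widetilde{\mathbf m}_0\in U^{\ast}\cap B_R$, letting $\mathbf m(\cdot\,;\widetilde{\mathbf m}_0)$ solve \eqref{s.inf.sys.g1}--\eqref{s.inf.sys.g3} from $\mathbf m(0)=\widetilde{\mathbf m}_0$, I would apply Girsanov's theorem, valid since the $v_k^j$ are bounded: with $\mathrm d\mathbb Q/\mathrm d\mathbb P=\exp\bigl(\sum_{(k,j)\in\mathcal K^1}\int_0^Tv_k^j\,\mathrm d\beta_k^j-\tfrac12\sum_{(k,j)\in\mathcal K^1}\int_0^T|v_k^j|^2\,\mathrm ds\bigr)$, the processes $\widehat\beta_k^j(t):=\beta_k^j(t)-\int_0^tv_k^j\,\mathrm ds$ are independent $\mathbb Q$--Brownian motions, and, the Stratonovich correction being insensitive to the added finite--variation term, under $\mathbb Q$ the process $\mathbf m(\cdot\,;\widetilde{\mathbf m}_0)$ solves the $\mathbf v^{\ast}$--controlled Stratonovich equation driven by $\widehat\beta$, whose noiseless skeleton is exactly $\mathbf y(\cdot\,;\widetilde{\mathbf m}_0)$. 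The Stroock--Varadhan support theorem (equivalently, a direct Wong--Zakai/Gronwall comparison of this controlled SDE with its skeleton) then gives $\mathbb Q\bigl(\|\mathbf m(\cdot\,;\widetilde{\mathbf m}_0)-\mathbf y(\cdot\,;\widetilde{\mathbf m}_0)\|_{C([0,T];S_K)}\le\eps/2\bigr)>0$, with a lower bound that depends on $\widetilde{\mathbf m}_0$ only through the smooth Galerkin data, $\|\mathbf v^{\ast}\|_{L^\infty}$, $T$ and $\eps$, hence is bounded below uniformly over $B_R$. Transporting this back to $\mathbb P$ — on the above event intersected with a level set $\{\max_{(k,j)}|\int_0^Tv_k^j\,\mathrm d\widehat\beta_k^j|\le M\}$ of $\mathbb Q$--probability close to one, $\mathrm d\mathbb P/\mathrm d\mathbb Q$ is bounded below by a positive constant depending only on $\|\mathbf v^{\ast}\|_{L^\infty}$, $M$ and $T$ — and using $|\mathbf y(T;\widetilde{\mathbf m}_0)-\mathbf m_1|_K\le\eps/2$ on $U^{\ast}$, I would arrive at a constant $\delta^{\ast}>0$, independent of $\widetilde{\mathbf m}_0$, with $\mathbb P\bigl(|\mathbf m(T;\widetilde{\mathbf m}_0)-\mathbf m_1|_K\le\eps\bigr)\ge\delta^{\ast}$ for every $\widetilde{\mathbf m}_0\in U^{\ast}\cap B_R$.

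Finally, since $B_R\subset S_K$ is compact it is covered by finitely many such neighbourhoods $U^{(1)},\dots,U^{(N)}$ with lower bounds $\delta^{(1)},\dots,\delta^{(N)}$, and $\delta:=\tfrac12\min_{1\le i\le N}\delta^{(i)}>0$ does the job. The hard part is the uniformity asserted above: one must show that the controlled Stratonovich diffusion stays $\eps/2$--close to its skeleton with probability bounded below \emph{uniformly in the initial datum over the compact set $B_R$}, i.e., that the constants in the support--theorem/small--ball estimate are locally uniform in the initial condition. The remaining ingredients — continuous dependence of the Galerkin flow on the initial datum, boundedness of the steering controls furnished by Theorem~\ref{glo.cont.1} (so Novikov's condition holds), and the lower bound on $\mathrm d\mathbb P/\mathrm d\mathbb Q$ — are routine.
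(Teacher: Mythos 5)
Your proposal follows essentially the same route as the paper: steer the deterministic Galerkin system with a control furnished by Theorem~\ref{glo.cont.1}, absorb that control into the noise via a Girsanov change of measure, compare the shifted stochastic trajectory with the controlled deterministic skeleton by a Gronwall-type argument, and convert the resulting $\mathbb{Q}$-positivity back to $\mathbb{P}$. The two tactical differences are minor but worth recording. First, for the change of measure you bound $d\mathbb{P}/d\mathbb{Q}$ from below on a truncation event for the stochastic integral, whereas the paper uses the Cauchy--Schwarz inequality in the form $\mathbb{E}^{\mathbb{P}}[UV]\geq \bigl(\mathbb{E}^{\mathbb{P}}[U^{1/2}]\bigr)^2\big/\mathbb{E}^{\mathbb{P}}[V^{-1}]$ together with a Burkholder--Davis--Gundy moment bound on $\mathcal{Q}^{-1}(T)$; both are standard and each works. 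Second, and more substantively, your covering argument (continuous dependence of the Galerkin flow on the initial datum, plus compactness of the ball $\{|\mathbf{m}-\mathbf{m}_0|_K\leq R\}$) explicitly produces a $\delta$ that is uniform over $\widetilde{\mathbf{m}}_0$ in that ball; the paper's written proof fixes the pair $(\mathbf{m}_0,\mathbf{m}_1)$, steers from $\mathbf{m}_0$ itself, and never returns to the perturbed initial datum $\widetilde{\mathbf{m}}_0$, so your treatment actually supplies a step the statement requires but the paper leaves implicit. You correctly flag the remaining hard point, namely that the small-ball/support estimate for the controlled diffusion around its skeleton must hold with constants locally uniform in the initial condition; for the finite-dimensional Galerkin system with polynomial coefficients this is a standard (if tedious) Gronwall computation, and it is the same estimate the paper performs for $\u=\mathbf{m}^s_v-\mathbf{m}^c$ without tracking the dependence on the initial point.
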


\begin{thm}\label{s.L2.app.cont.1}
Let $\M$ be the solution of the system \eqref{sM1.N}.
Then, for any $T > 0,$ $\M_0,\M_1\in \mathbb{H}^1(0,2\pi;\mathbb{S}^2)$ and  $\epsilon>0$, there exists $\delta=\delta(\M_0,\M_1,\eps,T)>0$ such that
$$\mathbb{P}(|\M(T)-\M_1|_{\mathbb{L}^2}<\epsilon)>\delta.$$
\end{thm}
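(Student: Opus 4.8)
The plan is to deduce this support statement from the deterministic $\mathbb{L}^2$-approximate controllability (Theorem \ref{L2.app.cont.1}) by a Girsanov change of measure. The key structural observation is that the random forcing in \eqref{sM1.N} acts through \emph{exactly} the same finite family of Fourier modes $\mathcal{K}^1$ that is available to the deterministic control \eqref{control.1}. Consequently, a Girsanov shift by a suitable control turns \eqref{sM1.N} into the \emph{controlled} deterministic equation \eqref{M1.N} perturbed by a conservative (norm-preserving) Stratonovich noise, whose noiseless skeleton is a reachable state of the deterministic system; positivity of a small-ball probability around that skeleton then finishes the argument.

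\textbf{Steps 1--2 (reduction via Girsanov).} Fix $T>0$, $\M_0,\M_1\in\mathbb{H}^1(0,2\pi;\mathbb{S}^2)$ and $\eps>0$. First I would invoke Theorem \ref{L2.app.cont.1} (and the construction in its proof, which uses only controls of the degenerate form \eqref{control.1}) to obtain $\v(t,x)=\sum_{(k,j)\in\mathcal{K}^1}v_k^j(t)\varphi_k(x)e_j$ with $v_k^j\in L^\infty(0,T)$ such that the solution $\M^{\v}$ of \eqref{M1.N} with $\M^{\v}(0)=\M_0$ obeys $|\M^{\v}(T)-\M_1|_{\mathbb{L}^2}<\eps/2$. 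Since the $v_k^j$ are bounded, Novikov's condition holds and
\[
\mathcal{Z}_T=\exp\Big(\sum_{(k,j)\in\mathcal{K}^1}\int_0^T v_k^j(s)\,d\beta_k^j(s)-\tfrac12\sum_{(k,j)\in\mathcal{K}^1}\int_0^T|v_k^j(s)|^2\,ds\Big)
\]
defines a measure $\widetilde{\mathbb{P}}\sim\mathbb{P}$ on $\mathcal{F}_T$ under which $\widetilde\beta_k^j(t):=\beta_k^j(t)-\int_0^t v_k^j(s)\,ds$, $(k,j)\in\mathcal{K}^1$, are independent Brownian motions. Because $t\mapsto\int_0^t v_k^j$ has bounded variation, $(\M\times\varphi_k e_j)\circ d\beta_k^j=(\M\times\varphi_k e_j)\circ d\widetilde\beta_k^j+v_k^j\,(\M\times\varphi_k e_j)\,dt$, so under $\widetilde{\mathbb{P}}$ the solution $\M$ of \eqref{sM1.N} solves
\[
d\M=\big(\mu_1\M\times\M_{xx}-\mu_2\M\times(\M\times\M_{xx})+\M\times\v\big)\,dt+\sum_{(k,j)\in\mathcal{K}^1}(\M\times\varphi_k e_j)\circ d\widetilde\beta_k^j ,
\]
i.e.\ the controlled deterministic LLGEs \eqref{M1.N} perturbed by the conservative noise, whose deterministic skeleton is precisely $\M^{\v}$.

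\textbf{Step 3 (small-ball estimate for the perturbation).} It then remains to show $\widetilde{\mathbb{P}}\big(|\M(T)-\M^{\v}(T)|_{\mathbb{L}^2}<\eps/2\big)>0$. I would obtain this from: (i) the well-posedness and a priori bounds for \eqref{sM1.N}, using that $|\M(t,x)|=1$ is preserved and that the exchange-energy balance, under the conservative noise, yields a control of $\mathbb{E}\,\|\M(t)\|_{\mathbb{H}^1}^2$ on $[0,T]$; (ii) the energy identity for $Y:=\M-\M^{\v}$, which in Itô form (the Itô correction of the conservative noise being a bounded drift) gives $d\|Y\|_{\mathbb{L}^2}^2\le C\|Y\|_{\mathbb{L}^2}^2\,dt+g_t\,dt+dN_t$, with $N$ a martingale of quadratic variation $\le C\,dt$ and $g_t$ bounded; (iii) a small-ball estimate for stochastic integrals with bounded integrands, so that on an event of positive $\widetilde{\mathbb{P}}$-probability $\sup_{t\le T}|N_t|$ and $\int_0^T|g_t|\,dt$ are as small as we wish, whence Gronwall gives $\|Y(T)\|_{\mathbb{L}^2}<\eps/2$ there. (Equivalently, Step 3 is the Stroock--Varadhan support inclusion for \eqref{sM1.N}: the $\mathbb{L}^2$-support of the law of $\M(T)$ contains the $\mathbb{L}^2$-closure of the reachable set of \eqref{M1.N} under degenerate controls \eqref{control.1} — a Wong--Zakai approximation of the Stratonovich noise being the technical device — and that reachable set meets every $\eps$-ball around every $\M_1\in\mathbb{H}^1(0,2\pi;\mathbb{S}^2)$ by Theorem \ref{L2.app.cont.1}.)

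\textbf{Step 4 and the main obstacle.} Steps 1--3 together with the triangle inequality give $\widetilde{\mathbb{P}}(|\M(T)-\M_1|_{\mathbb{L}^2}<\eps)>0$; since $\widetilde{\mathbb{P}}\sim\mathbb{P}$ and $\mathcal{Z}_T^{-1}>0$ a.s.,
\[
\mathbb{P}\big(|\M(T)-\M_1|_{\mathbb{L}^2}<\eps\big)=\widetilde{\mathbb{E}}\big[\mathcal{Z}_T^{-1}\,\mathbbm{1}_{\{|\M(T)-\M_1|_{\mathbb{L}^2}<\eps\}}\big]>0,
\]
and one takes $\delta$ to be half of this number, which depends on $\M_0,\M_1,\eps,T$ through the control $\v$, the Girsanov density and the small-ball constant. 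The hard part will be Step 3: proving that the $\mathbb{L}^2$-valued solution of the nonlinear, infinite-dimensional, Stratonovich SPDE \eqref{sM1.N} stays within $\eps/2$ of its deterministic skeleton with positive probability. This needs the a priori $\mathbb{H}^1$-estimates for \eqref{sM1.N}, careful bookkeeping of the conservative noise (its Itô correction is bounded, but the martingale part must be estimated in the $\mathbb{L}^2$ norm together with the quadratic nonlinearity), and a small-ball / Wong--Zakai input; Steps 1, 2 and 4 are routine once Step 3 is established.
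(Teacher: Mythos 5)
Your proposal is correct and follows essentially the same route as the paper: use the deterministic $\mathbb{L}^2$-approximate controllability of \eqref{M1.N} to produce a degenerate control $\v$ of the form \eqref{control.1}, perform the Girsanov shift along $\v$, compare the shifted stochastic solution with the deterministic controlled trajectory via an $\mathbb{L}^2$-energy estimate (the paper does this with a Gronwall bound plus Chebyshev, as in its proof of Theorem \ref{s.glo.cont.1}), and transfer the positive probability back to $\mathbb{P}$ (the paper quantifies this last step with a Cauchy--Schwarz lower bound involving $\mathbb{E}^{\mathbb{P}}[\mathcal{Q}^{-1}(T)]$ rather than your appeal to $\mathcal{Z}_T^{-1}>0$ a.s., but the content is the same). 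The step you flag as the main obstacle is also the least detailed part of the paper's own argument, so no essential idea is missing from your outline relative to the source.
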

\subsection{Novelty of our work}
We prove the exact controllability of Galerkin approximations (system \eqref{inf.sys.g1}-\eqref{inf.sys.g3}) of the system
\eqref{M1.N}. As a next step, we obtain the $\mathbb{L}^2$-approximate controllability of the full system \eqref{M1.N}. To prove the exact controllability of the Galerkin approximated system \eqref{inf.sys.g1}-\eqref{inf.sys.g3}, we first verify the Lie-bracket generating property of the finite-dimensional system.
 The control acts on the system in such a way that while computing the iterated Lie-bracket, we must show that the system satisfies Lie-bracket generating property. Therefore, it is technically challenging to guess the vector fields (in our case $\mathcal{K}^1$, see Section \ref{cont.5}).  
 We prove support theorems for the system \eqref{sM1.N}.

\section{Wellposedness of the LLGEs}
In this section, we discuss the well-posedness results of the systems \eqref{M1.N} and \eqref{sM1.N}. We refer to \cite{BMM} and references cited therein for details about the existence and uniqueness of the solution of the systems \eqref{M1.N} and \eqref{sM1.N}.

\subsection{Required function spaces and operators}
Let us denote the space of Lebesgue measurable real valued square integrable
functions defined on $(0,2\pi)$ by $\mathrm{L}^2(0,2\pi)$ and  
$$\mathbb{L}^2:= (\mathrm{L}^2(0,2\pi))^3=\mathrm{L}^2(0,2\pi;\mathbb{R}^3).$$
Similarly, we denote
$$
\mathrm{H}^1(0,2\pi):=\Big\{u\in L^2(0,2\pi)| \,\, u'\in L^2(0,2\pi) \Big\}\quad \mbox{and}\quad
\mathbb{H}^1:=(\mathrm{H}^1(0,2\pi))^3=\mathrm{H}^1(0,2\pi; \mathbb R^3).
$$
We denote the Sobolev space $\mathbb{H}^1(0,2\pi;\mathbb{S}^2)$ by
\begin{equation}\label{eqn-H^1(D,S^2)}
\mathbb{H}^1(0,2\pi;\mathbb{S}^2):=\Bigl\{\M \in \mathbb{H}^1 \mbox{ such that }|\M(x)|_{\R3}=1  \mbox{ for a.e. } x \in (0,2\pi)  \Bigr\}.\end{equation}
In particular, $\mathbb{H}^1(\mathcal{O};\mathbb{S}^2)$ is the set of equivalence classes of all functions belonging to  the Sobolev space
$\mathbb{H}^1$ whose values are in the sphere. 
We define the Laplacian with the Neumann boundary conditions by
\begin{equation}
\label{op.n} \left\{
\begin{array}{ll}
D(\mathcal{A}) &:= \{ \M \in \mathbb{H}^2(0,2\pi;\mathbb{R}^3):\M_x(0)=\M_x(2\pi)=0 \},\cr
\mathcal{A}(\M)&:=- \M_{xx}, \quad \M\in D(\mathcal{A}).
\end{array}
\right.
\end{equation}
Let us consider the Gelfand triple
$$
D(\mathcal{A})\subset \Hh \subset \L\cong (\L)' \subset
(\Hh)' \subset (D(\mathcal{A}))',
$$
where  $(\mathbb{H}^1)'$ is the dual space of $\mathbb{H}^1$ with duality pairing
$\langle \cdot ,\cdot \rangle :=_{(\Hh)'}\langle\cdot ,\cdot \rangle_{\Hh}.$
%
 We note that $\A$ is self-adjoint and non-negative  operator in $\mathbb{L}^2.$ Define $\rA_1:=I+\mathcal{A}.$ We note that $\rV:=Dom(\rA_1^{1/2})$ when endowed with the graph norm coincides with $\mathbb{H}^1.$ 
\subsection{Wellposedness of the system \eqref{M1.N}}
In this subsection we recall the wellposedness of the system \eqref{M1.N}. We have given the definition of weak solution to the system \eqref{M1.N} and then mention the existence, uniqueness and continuous dependence of solution on given data.

\begin{defi} \label{defi.weak1.m} (Weak solution) Let $T>0,\,\v\in \mathrm{L}^2(0,T;\mathbb{L}^2) $ and $\M_0\in \Hh $ be fixed. An element $\M \in \mathrm{L}^2(0,T;D(\mathcal{A}))\cap \mathrm{L}^\infty(0,T;\Hh)$ with
$\M'\in \mathrm{L}^2(0,T;\mathbb{L}^2)$
 is said to be a strong solution of the system \eqref{M1.N} if the following items are satisfied:
\begin{enumerate}[(i)]
\item 
 We have
\begin{align} \label{exist.intm.defs}
\int_0^T |\M(t) \times \M_{xx}(t)|_{\mathbb{L}^2}^2 dt < \infty; 
\end{align}
\item \addbnew{$\M$ satisfies the following saturation condition
\begin{align} \label{eqn-m-saturations}
|\M(t,x)|_{\R3}=1 \quad \mbox{for a.e.}\,\, x \in \mathcal{O}, \mbox{ for all}\,\, t \in [ 0,T];
\end{align}}
\item For all $\phi \in \Hh,$ and for all $t \in [0,T]$ we have
\begin{align} \label{eq.weak1}
\langle \M(t), \phi \rangle_{\mathbb{L}^2}&=\langle \M(0), \phi\rangle_{\mathbb{L}^2}-\mu_1 \int_0^t \int_0^{2\pi} \langle \M_x(s,x), \phi_x(x) \times \M(s,x) \rangle_{\R3} dx \, ds \notag\\
&-\mu_2 \int_0^t \int_0^{2\pi} \langle \M_x(s,x), (\M \times \phi)_x(s,x) \times \M(s,x) \rangle_{\R3} dx\, ds\notag\\
&+\int_0^t \int_0^{2\pi} \langle \M(t,x) \times\v(s,x),\phi(x)\rangle_{\R3} dx\, ds.
\end{align}
\end{enumerate}
\end{defi}
 The following existence result is proved using the Faedo-Galerkin approximation in \cite[Theorem 3.2, Theorem 3.16, Lemma 5.2]{ BMM}, Ba{\~n}as et.al. \cite{BBP1, BBNP1}. 
\begin{thm}  [\cite{BMM}] \label{exist.thmw}
 Let $\M_0 \in \mathbb{H}^1(0,2\pi;\mathbb{S}^2)$ and $\v\in \mathrm{L}^2(0,T;\mathbb{L}^2).$ Then, there exists a weak solution $\M$ to the system \eqref{M1.N}. Moreover,
 we have  $\M \in C ([0,T];\Hh)$ and
 there exists a constant $C> 0$, depending on $T, \mu_1,\mu_2, |\M_0|_{\mathbb{H}^1}$ such that
\begin{align} \label{exist.mw}
\sup_{t \in [0,T]} |\M(t)|_{\mathbb{H}^1} \leq C,
\end{align}
and 
\begin{align} \label{exist.intmw}
\int_0^T |\M(t) \times \M_{xx}(t)|_{\mathbb{L}^2}^2 dt \leq C.
\end{align}
\end{thm}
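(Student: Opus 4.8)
The plan is a Faedo--Galerkin approximation followed by a compactness argument; the full details can be found in \cite{BMM} and in \cite{BBP1, BBNP1}. First I would fix the $\mathbb{L}^2$-orthonormal eigenfunctions $\{\varphi_k\}_{k\ge 0}$ of the Neumann Laplacian $\mathcal{A}$, set $H_n:=\mathrm{span}\{\varphi_k e_j:0\le k\le n,\ j=1,2,3\}$ with orthogonal projection $P_n:\mathbb{L}^2\to H_n$, and look for $\M^n(t)\in H_n$ solving the finite-dimensional system
\begin{equation*}
\M^n_t=P_n\Big(\mu_1\,\M^n\times\M^n_{xx}-\mu_2\,\M^n\times(\M^n\times\M^n_{xx})+\M^n\times\v\Big),\qquad \M^n(0)=P_n\M_0.
\end{equation*}
Its right-hand side is polynomial in the coordinates of $\M^n$ with an $\mathrm{L}^2(0,T)$-in-time forcing coming from $\v$, so Carath\'eodory's theorem yields a maximal solution, which the a priori bounds below extend to all of $[0,T]$.

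The heart of the matter is a pair of energy estimates for $\M^n$. Testing with $\M^n$ and using that $\M^n\times\M^n_{xx}$, $\M^n\times(\M^n\times\M^n_{xx})$ and $\M^n\times\v$ are pointwise orthogonal to $\M^n$ gives $\tfrac{d}{dt}|\M^n(t)|_{\mathbb{L}^2}^2=0$, hence a uniform $\mathbb{L}^2$ bound. Testing with $\mathcal{A}\M^n=-\M^n_{xx}\in H_n$, the gyromagnetic term drops because $\M^n\times\M^n_{xx}\perp\M^n_{xx}$, while the Gilbert term produces, through the Lagrange identity, the coercive quantity $\mu_2\,|\M^n(t)\times\M^n_{xx}(t)|_{\mathbb{L}^2}^2$, leaving
\begin{equation*}
\tfrac{1}{2}\tfrac{d}{dt}|\M^n_x(t)|_{\mathbb{L}^2}^2+\mu_2\,|\M^n(t)\times\M^n_{xx}(t)|_{\mathbb{L}^2}^2=-\langle\M^n\times\v,\M^n_{xx}\rangle_{\mathbb{L}^2}.
\end{equation*}
The right-hand side is controlled by the one-dimensional Agmon inequality $|\M^n|_{\mathbb{L}^\infty}^2\le C|\M^n|_{\mathbb{L}^2}|\M^n|_{\mathbb{H}^1}$ together with the $\mathbb{L}^2$ bound just obtained, after rewriting $\M^n\cdot\M^n_{xx}$ in terms of $(|\M^n|^2)_{xx}$ and $|\M^n_x|^2$ so that the top-order contribution can be absorbed; Gronwall then yields \eqref{exist.mw} uniformly in $n$ and, upon integrating in time, \eqref{exist.intmw}. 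Reinserting these bounds into the equation gives uniform bounds on $\M^n$ in $\mathrm{L}^2(0,T;D(\mathcal{A}))$ and on $\M^n_t$ in $\mathrm{L}^2(0,T;\mathbb{L}^2)$.

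From here, Banach--Alaoglu together with the Aubin--Lions--Simon lemma produces a subsequence with $\M^n\rightharpoonup\M$ weakly in $\mathrm{L}^2(0,T;D(\mathcal{A}))$, weakly-$*$ in $\mathrm{L}^\infty(0,T;\mathbb{H}^1)$, $\M^n_t\rightharpoonup\M_t$ weakly in $\mathrm{L}^2(0,T;\mathbb{L}^2)$, and $\M^n\to\M$ strongly in $C([0,T];\mathbb{L}^2)$ and in $\mathrm{L}^2(0,T;\mathbb{H}^1)$, hence uniformly on $[0,T]\times[0,2\pi]$ by the embedding $\mathbb{H}^1\embed\mathbb{L}^\infty$; this suffices to pass to the limit in the quadratic and cubic terms of the weak formulation \eqref{eq.weak1}, the projections $P_n$ disappearing against fixed test functions. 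Interpolating $\M\in\mathrm{L}^2(0,T;D(\mathcal{A}))$ with $\M_t\in\mathrm{L}^2(0,T;\mathbb{L}^2)$ gives $\M\in C([0,T];\mathbb{H}^1)$. Finally, in the limit $\M$ solves \eqref{M1.N} with $\M_t\in\mathrm{L}^2(0,T;\mathbb{L}^2)$ and every term on its right-hand side is pointwise orthogonal to $\M$, so $\partial_t|\M|^2=2\,\M\cdot\M_t=0$; together with $|\M_0|_{\R3}\equiv1$ this establishes the saturation condition \eqref{eqn-m-saturations}.

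The step I expect to be the main obstacle is closing the $\mathbb{H}^1$ a priori estimate in the second paragraph: the Galerkin truncation does not preserve $|\M^n|\equiv1$, so the super-quadratic term $|\M^n_x|^2\M^n$ hidden inside $\M^n\times(\M^n\times\M^n_{xx})$, as well as the forcing contribution $\langle\M^n\times\v,\M^n_{xx}\rangle$ (which is of top order and can only be absorbed after interpolation), must be handled without the constraint at one's disposal. It is precisely the embedding $\mathbb{H}^1\embed\mathbb{L}^\infty$ and the one-dimensional Gagliardo--Nirenberg inequalities that close the argument, which is why the statement is confined to the interval $(0,2\pi)$.
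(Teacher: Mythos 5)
The paper does not actually prove this theorem: it is quoted from \cite{BMM} with a pointer to the Faedo--Galerkin construction there, so your sketch is being measured against a citation rather than a written argument. Your overall route --- projection onto Neumann eigenmodes, the two energy identities, Aubin--Lions compactness, and recovery of the saturation constraint in the limit from pointwise orthogonality of the right-hand side to $\M$ --- is exactly the method the paper attributes to \cite{BMM} and to Ba\~nas et al., so at the level of strategy you are aligned.

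There is, however, one step in your second paragraph that would not close as written. After testing with $\mathcal{A}\M^n$, the only coercive top-order quantity available is $\mu_2|\M^n\times\M^n_{xx}|_{\mathbb{L}^2}^2$, which controls only the component of $\M^n_{xx}$ orthogonal to $\M^n$. An Agmon-type bound of $\langle\M^n\times\v,\M^n_{xx}\rangle_{\mathbb{L}^2}$ by $|\M^n|_{\mathbb{L}^\infty}|\v|_{\mathbb{L}^2}|\M^n_{xx}|_{\mathbb{L}^2}$ therefore cannot be absorbed, since $|\M^n_{xx}|_{\mathbb{L}^2}$ is not controlled at this stage (and with $\v$ only $\mathrm{L}^2$ in time there is no Gronwall structure to fall back on). The device that makes the forcing term harmless is the cyclic identity \eqref{eqn_A-3}: $\langle\M^n\times\v,\M^n_{xx}\rangle_{\mathbb{L}^2}=-\langle\v,\M^n\times\M^n_{xx}\rangle_{\mathbb{L}^2}\le\tfrac{\mu_2}{2}|\M^n\times\M^n_{xx}|_{\mathbb{L}^2}^2+\tfrac{1}{2\mu_2}|\v|_{\mathbb{L}^2}^2$, after which \eqref{exist.mw} and \eqref{exist.intmw} follow by direct integration in time, with no interpolation and no Gronwall at all. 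Relatedly, your claimed uniform $\mathrm{L}^2(0,T;D(\mathcal{A}))$ bound on $\M^n$ needs more care than you indicate: the component of $\M^n_{xx}$ parallel to $\M^n$ is precisely what the Gilbert term does not see, and recovering it requires the identity $\langle\M^n,\M^n_{xx}\rangle_{\mathbb{R}^3}=\tfrac12(|\M^n|^2)_{xx}-|\M^n_x|^2$ combined with one-dimensional Gagliardo--Nirenberg estimates; that is where the interpolation you invoke genuinely enters the argument, not in the treatment of the control term.
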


\begin{thm} [\cite{BMM}] \label{uniq_cont.thmw}
 Let  $\M_i \in \mathrm{L}^4(0,T;\Hh)$ be
solutions to  \eqref{M1.N} with $\M_i(0)=\M_{i0}\in \mathbb{H}^1(0,2\pi;\mathbb{S}^2)$ and $\v_i\in \mathrm{L}^2(0,T;\mathbb{L}^2)$ for i=1,2 .    Then, for $i=1,2,$ $\M_i^\prime \in \mathrm{L}^2(0,T;(\Hh)^\prime)$ and $\M_i$  solve the following equation
\begin{align} 
 (\M_i)_t(t)+\mu_2 \mathcal{A} \M_i(t)&=\mu_2 |\M_i(t)_x|_{\R3}^2 \M_i(t) +\mu_1 (\M_i(t) \times (\M_i(t)_{xx}))
+ \M_i(t,x) \times\v_i(t), \; \; t\in (0,T),\label{eqn.var1}\\
\M_{ix}(t,0) & =0=\M_{ix}(t,2\pi),\,\, t\in (0,T], \label{eqn.var2} \\
\M_i(0,x) & =\M_{i0}(x),\,\,x \in(0,2\pi), \label{eqn.var3}
\end{align}
in the weak sense with respect to the Gelfand triple $\Hh \subset \mathbb{L}^2 \subset (\Hh)^\prime$.
Moreover, there exists a constant $C>0$ and a function $\varphi_C\in L^1(0,T)$, depending on the norms $\|\M_i\|_{\mathrm{L}^4(0,T;\Hh)},\,i=1,2$ , such that the following estimate
\begin{align}\label{eqn.var1.est}
|\M_2(t)-\M_1(t)|^2_{\mathbb{L}^2} \leq \Big(|\M_2(0)-\M_1(0)|^2_{\mathbb{L}^2}+C\int_0^t|\v_2(s)-\v_1(s)|_{\mathbb{L}^2}^2ds\Big) e^{2\int_0^t \varphi_C(s) ds}, \; t\in [0,T]
\end{align}
holds.
\end{thm}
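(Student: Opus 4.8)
The statement consists of a reformulation of the equation and an energy estimate, which I would establish in that order. \textbf{Step 1 (the form \eqref{eqn.var1}).} The passage from the double cross-product form in \eqref{M1.N} to the quasilinear parabolic form \eqref{eqn.var1} uses only the saturation constraint $|\M_i(t,x)|_{\R3}=1$. Differentiating it in $x$ gives $\M_i\cdot(\M_i)_x=0$ and then $\M_i\cdot(\M_i)_{xx}=-|(\M_i)_x|_{\R3}^2$, so the vector identity $a\times(b\times c)=(a\cdot c)\,b-(a\cdot b)\,c$ gives
\[
\M_i\times\bigl(\M_i\times(\M_i)_{xx}\bigr)=\bigl(\M_i\cdot(\M_i)_{xx}\bigr)\M_i-|\M_i|_{\R3}^2\,(\M_i)_{xx}=-|(\M_i)_x|_{\R3}^2\,\M_i-(\M_i)_{xx},
\]
and plugging this into \eqref{M1.N} together with $\mathcal{A}=-\partial_{xx}$ yields \eqref{eqn.var1}. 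To see that \eqref{eqn.var1} is an identity in $\mathrm{L}^2(0,T;(\Hh)')$ — hence also that $\M_i'\in \mathrm{L}^2(0,T;(\Hh)')$ — I would bound each right-hand term tested against $\phi\in\Hh$ using the one-dimensional embedding $\Hh\embed \mathrm{L}^\infty$ and $|\M_i|_{\R3}=1$: $\M_i\times\v_i$ already belongs to $\mathbb{L}^2$; $|(\M_i)_x|^2\M_i$ lies in $\mathrm{L}^1\embed(\Hh)'$ with $(\Hh)'$-norm at most $C|(\M_i)_x|_{\mathbb{L}^2}^2$; $\mathcal{A}\M_i$ pairs as $\langle(\M_i)_x,\phi_x\rangle$; and $\M_i\times(\M_i)_{xx}$, after one integration by parts (boundary terms vanish by \eqref{eqn.var2}), pairs as $-\langle(\M_i)_x,\phi_x\times\M_i\rangle$, bounded by $|(\M_i)_x|_{\mathbb{L}^2}|\phi|_{\Hh}$. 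The hypothesis $\M_i\in\mathrm{L}^4(0,T;\Hh)$ then makes all of these square-integrable in time.

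\textbf{Step 2 (the estimate \eqref{eqn.var1.est}).} Put $\M:=\M_2-\M_1$ and $\v:=\v_2-\v_1$, subtract the two copies of \eqref{eqn.var1}, and test against $\M$ (legitimate by Step 1: it produces $\tfrac12\tfrac{d}{dt}|\M(t)|_{\mathbb{L}^2}^2$ together with the dissipative term $\mu_2|\M_x(t)|_{\mathbb{L}^2}^2$ on the left). I would split each nonlinear difference into a (coefficient built from $\M_1,\M_2$)$\,\times\,\M$ piece plus a (difference of the $\M$-coefficients)$\,\times\,\M_i$ piece; e.g.\ $\M_2\times(\M_2)_{xx}-\M_1\times(\M_1)_{xx}=\M_2\times\M_{xx}+\M\times(\M_1)_{xx}$, and $|(\M_2)_x|^2\M_2-|(\M_1)_x|^2\M_1=|(\M_2)_x|^2\M+\bigl(\M_x\cdot((\M_2)_x+(\M_1)_x)\bigr)\M_1$, and $\M_2\times\v_2-\M_1\times\v_1=\M_2\times\v+\M\times\v_1$. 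The pointwise orthogonalities $\bigl(\M\times(\M_1)_{xx}\bigr)\cdot\M=0$ and $(\M\times\v_1)\cdot\M=0$ annihilate the two most singular contributions. For the remaining top-order term I would integrate $\langle\M_2\times\M_{xx},\M\rangle$ by parts (boundary terms vanish by the Neumann conditions) and use $\M_x\cdot(\M_x\times\M_2)=0$ to reduce it to $-\langle\M_x,\M\times(\M_2)_x\rangle$. Everything left is then estimated with H\"older's inequality, the one-dimensional interpolation $|\M|_{\mathrm{L}^\infty}\le C|\M|_{\mathbb{L}^2}^{1/2}\bigl(|\M|_{\mathbb{L}^2}+|\M_x|_{\mathbb{L}^2}\bigr)^{1/2}$, $|\M_i|_{\R3}=1$, and Young's inequality, arranged so that a small multiple of $|\M_x(t)|_{\mathbb{L}^2}^2$ is absorbed into the dissipative term on the left. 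The outcome is
\[
\tfrac{d}{dt}|\M(t)|_{\mathbb{L}^2}^2\le 2\varphi_C(t)\,|\M(t)|_{\mathbb{L}^2}^2+C\,|\v(t)|_{\mathbb{L}^2}^2,\qquad \varphi_C(t):=C\bigl(1+|(\M_1)_x(t)|_{\mathbb{L}^2}^4+|(\M_2)_x(t)|_{\mathbb{L}^2}^4\bigr),
\]
where $\varphi_C\in \mathrm{L}^1(0,T)$, with $\mathrm{L}^1$-norm controlled by $\|\M_1\|_{\mathrm{L}^4(0,T;\Hh)}$ and $\|\M_2\|_{\mathrm{L}^4(0,T;\Hh)}$, precisely because $\M_i\in\mathrm{L}^4(0,T;\Hh)$. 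Gronwall's inequality applied to this differential inequality gives \eqref{eqn.var1.est}.

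\textbf{The main obstacle} is the top-order precession term $\mu_1\langle\M_2\times\M_{xx},\M\rangle$: it contains second derivatives of the difference $\M$, it is of the same differential order as the dissipative term but carries no sign, and the precession part of the LLGEs provides no parabolic smoothing. The resolution is that, after integration by parts, the cross-product cancellation $\M_x\cdot(\M_x\times\M_2)=0$ leaves only the first-order expression $-\langle\M_x,\M\times(\M_2)_x\rangle$, and then the power count in the interpolation/Young step makes the leftover $|\M_x|_{\mathbb{L}^2}^2$ genuinely absorbable into $\mu_2|\M_x|_{\mathbb{L}^2}^2$ at the price of an $\mathrm{L}^1(0,T)$ time-coefficient — which is exactly what forces the quartic integrability $\M_i\in\mathrm{L}^4(0,T;\Hh)$. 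By comparison, Step 1 is routine once the regularity of the solutions (in the sense of Definition \ref{defi.weak1.m}) is available.
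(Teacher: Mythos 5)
Your argument is correct and is the standard route to this result; note that the paper itself gives no proof of Theorem \ref{uniq_cont.thmw} but imports it from \cite{BMM}, where the proof proceeds exactly as you outline (reformulation via $a\times(b\times c)=(a\cdot c)b-(a\cdot b)c$ and the saturation constraint, then an $\mathbb{L}^2$ energy estimate for the difference with the cross-product cancellations, the one-dimensional interpolation $\mathbb{H}^1\embed\mathrm{L}^\infty$, absorption into the dissipative term, and Gronwall). Your identification of the quartic time-integrability $\M_i\in\mathrm{L}^4(0,T;\Hh)$ as precisely what makes $\varphi_C\in\mathrm{L}^1(0,T)$ is the right reading of the hypothesis.
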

As a consequence of Theorem \ref{uniq_cont.thmw}, we have the following result.
\begin{cor}\label{cont.2}
For given $(\M_0, \v)\in \Hh\times \mathrm{L}^2(0,T;\mathbb{L}^2)$, let $\M\in C([0,T];\Hh)$ be the unique solution of \eqref{M1.N}. Then, the mapping 
$$\mathcal{S}:\Hh\times \mathrm{L}^2(0,T;\mathbb{L}^2)\to 
C([0,T];\mathbb{L}^2)
\quad \hbox{defined by} \quad
 \mathcal{S}(\M_0, \v)=\M $$
is continuous.
\end{cor}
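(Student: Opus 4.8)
The plan is to read off the continuity of $\mathcal{S}$ from the quantitative stability estimate \eqref{eqn.var1.est} of Theorem \ref{uniq_cont.thmw}. The delicate point is that the constant $C$ and the integrable function $\varphi_C$ occurring in \eqref{eqn.var1.est} depend on the two solutions being compared, through $\|\M_i\|_{\mathrm{L}^4(0,T;\Hh)}$; so one first has to control these norms \emph{uniformly} on a neighbourhood of the base point, which is exactly what the a priori bound \eqref{exist.mw} of Theorem \ref{exist.thmw} provides. (Note that a weak solution enjoying the saturation property \eqref{eqn-m-saturations} forces its initial datum to take values in $\mathbb{S}^2$, so $\mathcal{S}$ is really defined on $\mathbb{H}^1(0,2\pi;\mathbb{S}^2)\times\mathrm{L}^2(0,T;\mathbb{L}^2)$, and continuity is meant with respect to the ambient $\Hh\times\mathrm{L}^2(0,T;\mathbb{L}^2)$-topology; I would in fact prove the stronger local Lipschitz property. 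That one lands only in $C([0,T];\mathbb{L}^2)$ rather than in $C([0,T];\Hh)$ is precisely because \eqref{eqn.var1.est} is an $\mathbb{L}^2$-estimate.)

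To carry this out I would fix $(\M_0,\v)$ in the domain and a radius $\rho>0$, and take an arbitrary $(\widetilde{\M}_0,\widetilde{\v})$ with $\widetilde{\M}_0\in\mathbb{H}^1(0,2\pi;\mathbb{S}^2)$ and $|\widetilde{\M}_0-\M_0|_{\Hh}\le\rho$. By \eqref{exist.mw}, with the constant there taken non-decreasing in $|\M_0|_{\Hh}$, the solutions $\M:=\mathcal{S}(\M_0,\v)$ and $\widetilde{\M}:=\mathcal{S}(\widetilde{\M}_0,\widetilde{\v})$ both satisfy $\sup_{t\in[0,T]}|\M(t)|_{\Hh}\le C_0$ and $\sup_{t\in[0,T]}|\widetilde{\M}(t)|_{\Hh}\le C_0$, with $C_0=C_0\bigl(T,\mu_1,\mu_2,|\M_0|_{\Hh}+\rho\bigr)$; hence $\|\M\|_{\mathrm{L}^4(0,T;\Hh)},\,\|\widetilde{\M}\|_{\mathrm{L}^4(0,T;\Hh)}\le T^{1/4}C_0$. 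Applying Theorem \ref{uniq_cont.thmw} to $\M$ and $\widetilde{\M}$, the constant $C$ and the function $\varphi_C$ --- depending only on these two $\mathrm{L}^4(0,T;\Hh)$-norms --- may be chosen depending only on $C_0$, hence uniformly over the $\rho$-ball; put $\Lambda:=\int_0^T|\varphi_C(s)|\,ds<\infty$. Taking the supremum over $t\in[0,T]$ in \eqref{eqn.var1.est}, bounding $e^{2\int_0^t\varphi_C(s)\,ds}\le e^{2\Lambda}$, and using the embedding $|\cdot|_{\mathbb{L}^2}\le|\cdot|_{\Hh}$, one obtains
\begin{equation*}
\bigl\|\mathcal{S}(\widetilde{\M}_0,\widetilde{\v})-\mathcal{S}(\M_0,\v)\bigr\|_{C([0,T];\mathbb{L}^2)}^{2}
\;\le\; e^{2\Lambda}\Bigl(\,|\widetilde{\M}_0-\M_0|_{\Hh}^{2}+C\,\|\widetilde{\v}-\v\|_{\mathrm{L}^2(0,T;\mathbb{L}^2)}^{2}\,\Bigr).
\end{equation*}
Letting $(\widetilde{\M}_0,\widetilde{\v})\To(\M_0,\v)$ in $\Hh\times\mathrm{L}^2(0,T;\mathbb{L}^2)$, the right-hand side goes to $0$; since $(\M_0,\v)$ was arbitrary, $\mathcal{S}$ is continuous (indeed locally Lipschitz).

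The step I expect to be the main obstacle is the claimed uniformity of $(C,\varphi_C)$ over the $\rho$-ball. This amounts to checking, by going back to the proof of Theorem \ref{uniq_cont.thmw} in \cite{BMM}, that both $C$ and $\varphi_C$ are built monotonically from $\|\M_1\|_{\mathrm{L}^4(0,T;\Hh)}$ and $\|\M_2\|_{\mathrm{L}^4(0,T;\Hh)}$ --- which is the case, since they arise from a Gronwall argument on the difference equation in which these norms enter only through the Ladyzhenskaya-type absorption of the nonlinear terms --- so that the uniform $\Hh$-bound of Theorem \ref{exist.thmw} transfers to a uniform choice of $C$ and $\varphi_C$. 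Once this bookkeeping is in place, the remainder is routine.
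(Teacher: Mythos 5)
Your proposal is correct and follows exactly the route the paper intends: the corollary is stated there as an immediate consequence of the stability estimate \eqref{eqn.var1.est}, combined with the uniform a priori bound \eqref{exist.mw} to make the constants $C$ and $\varphi_C$ uniform near the base point. Your write-up is in fact more careful than the paper, which omits the proof entirely, and your remarks on the domain of $\mathcal{S}$ and on the monotone dependence of $(C,\varphi_C)$ on the $\mathrm{L}^4(0,T;\Hh)$-norms are exactly the bookkeeping the paper leaves implicit.
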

\subsection{Wellposedness of the system \eqref{M1.N}}
In this subsection, we mention the wellposedness of the system \eqref{sM1.N}. 
The following existence result is proved using Faedo-Galerkin approximation in  \cite[Theorem 7.1, Lemma 5.2]{BMM}.
\begin{thm}[\cite{BMM}] \label{exist.thmws}
 Let $\M_0 \in \mathbb{H}^1(0,2\pi;\mathbb{S}^2)$ and $\v\in \mathrm{L}^\infty(0,T;\mathbb{L}^\infty).$ Then, there exists a weak solution $\M$ to the system \eqref{M1.N}. Moreover,  $\M \in C ([0,T];\Hh)$ and
 there exists a constant $C> 0$, depending on $T, \mu_1,\mu_2, |\M_0|_{\mathbb{H}^1}$ such that
\begin{align} \label{exist.mws}
\mathbb{E}\Big[\sup_{t \in [0,T]} |\M(t)|_{\mathbb{H}^1}\Big] \leq C,
\end{align}
and 
\begin{align} \label{exist.intmws}
\mathbb{E}\Big[\int_0^T |\M(t) \times \M_{xx}(t)|_{\mathbb{L}^2}^2 \, dt\Big]\leq C.
\end{align}
\end{thm}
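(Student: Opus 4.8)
The plan is to use the Faedo--Galerkin approximation together with a stochastic compactness argument, which is the scheme of \cite{BMM}; the computations are unchanged if an extra deterministic forcing $\M\times\v$, $\v\in\mathrm{L}^\infty(0,T;\mathbb{L}^\infty)$, is also present. Let $(\varphi_k)_{k\in\Na_0}$ be the $\mathrm{L}^2(0,2\pi)$-orthonormal basis of eigenfunctions of the Neumann Laplacian, set $H_n:=\big(\mathrm{span}\{\varphi_0,\dots,\varphi_n\}\big)^3$ and let $\Pi_n$ be the $\mathbb{L}^2$-orthogonal projection onto $H_n$. On $H_n$ I would solve the projected Stratonovich SDE
\[
d\M^n=\Pi_n\Big(\mu_1\,\M^n\times\M^n_{xx}-\mu_2\,\M^n\times(\M^n\times\M^n_{xx})\Big)dt+\sum_{(k,j)\in\mathcal{K}^1}\Pi_n\big(\M^n\times(\varphi_k e_j)\big)\circ d\beta^j_k,
\]
with $\M^n(0)=\Pi_n\M_0$. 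The drift is polynomial and the diffusion coefficients are linear on $H_n$, so a unique local strong solution exists. Since $\M^n_{xx}=-\A\M^n$ maps $H_n$ into itself and the cross product is pointwise orthogonal to its factors, both the drift and the diffusion are $\mathbb{L}^2$-orthogonal to $\M^n$, and the Stratonovich chain rule gives $d|\M^n(t)|_{\mathbb{L}^2}^2=0$, whence $|\M^n(t)|_{\mathbb{L}^2}=|\Pi_n\M_0|_{\mathbb{L}^2}\le|\M_0|_{\mathbb{L}^2}$. (The Stratonovich form is what makes this conservation hold; the Itô form would add a positive $\sum_{(k,j)}|\Pi_n(\M^n\times\varphi_k e_j)|_{\mathbb{L}^2}^2$ term.)

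I would then prove a priori bounds uniform in $n$. Converting the equation to Itô form adds a drift correction $\frac12\sum_{(k,j)}\Pi_n\big((\Pi_n(\M^n\times\varphi_k e_j))\times\varphi_k e_j\big)$, linear in $\M^n$. Applying Itô's formula to the exchange energy $E^n(t):=\frac12|\M^n_x(t)|_{\mathbb{L}^2}^2=\frac12\langle\A\M^n(t),\M^n(t)\rangle_{\mathbb{L}^2}$ and pairing the drift with $\A\M^n$: the gyroscopic term $\mu_1\langle\M^n\times\M^n_{xx},-\M^n_{xx}\rangle$ vanishes pointwise, the damping term yields $-\mu_2|\M^n\times\M^n_{xx}|_{\mathbb{L}^2}^2$, and both the Stratonovich correction and the Itô second-order term are bounded by $C(1+E^n)$ because the finitely many $\varphi_k$ with $(k,j)\in\mathcal{K}^1$ are smooth, so $|\M^n\times\varphi_k e_j|_{\mathbb{H}^1}\le C|\M^n|_{\mathbb{H}^1}$. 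Taking expectations, controlling the martingale part by Burkholder--Davis--Gundy and Young's inequality, and using Gronwall gives
\[
\sup_n\mathbb{E}\Big[\sup_{t\in[0,T]}|\M^n(t)|_{\mathbb{H}^1}^2\Big]+\sup_n\mathbb{E}\Big[\int_0^T|\M^n(t)\times\M^n_{xx}(t)|_{\mathbb{L}^2}^2\,dt\Big]\le C,
\]
together with higher moments $\mathbb{E}[\sup_{t}|\M^n(t)|_{\mathbb{H}^1}^{2p}]\le C_p$. From the equivalent quasilinear form $\M^n_t+\mu_2\A\M^n=\Pi_n(\mu_2|\M^n_x|^2\M^n+\mu_1\M^n\times\M^n_{xx})+(\text{noise})$, a parabolic bootstrap (testing with $\A\M^n$, one-dimensional interpolation, Young's inequality, and the moment bounds above) yields $\sup_n\mathbb{E}\|\M^n\|_{\mathrm{L}^2(0,T;D(\A))}^2\le C$, and a uniform fractional-in-time bound on the martingale part completes the estimates needed for tightness.

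Next I would pass to the limit. A stochastic compactness criterion of Aubin--Lions--Simon type (Aubin--Lions for the deterministic part plus tightness of the martingale part) gives tightness of $\mathrm{Law}(\M^n)$ on $C([0,T];\mathbb{L}^2)\cap\mathrm{L}^2(0,T;\Hh)$, retaining weak $\mathrm{L}^2(0,T;D(\A))$-information; by the Jakubowski--Skorokhod representation theorem there are, on a new probability space and along a subsequence, processes $\widetilde\M^n\to\widetilde\M$ a.s.\ in this path space with $\mathrm{Law}(\widetilde\M^n)=\mathrm{Law}(\M^n)$, together with converging Brownian motions. Strong $\mathrm{L}^2(0,T;\Hh)$-convergence and weak convergence $\widetilde\M^n_{xx}\rightharpoonup\widetilde\M_{xx}$ pass the bilinear term $\widetilde\M^n\times\widetilde\M^n_{xx}$ to the limit; for the non-coercive double cross product one first recovers the saturation $|\widetilde\M|_{\R3}=1$ a.e.\ (the Galerkin defect $\||\M^n|^2-1\|_{\mathrm{L}^1(0,2\pi)}\to0$, as in the deterministic theory) and then rewrites $\mu_2\,\widetilde\M\times(\widetilde\M\times\widetilde\M_{xx})=\mu_2\big(|\widetilde\M_x|^2\widetilde\M-\widetilde\M_{xx}\big)$; the stochastic integrals converge by the standard lemma on stochastic integrals under Skorokhod coupling. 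Hence $\widetilde\M$ is a weak (martingale) solution of \eqref{sM1.N} in the sense of the stochastic analogue of \eqref{eq.weak1}, with $\widetilde\M\in C([0,T];\Hh)$ by the usual parabolic-regularity argument, and the estimates \eqref{exist.mws}, \eqref{exist.intmws} follow from the $n$-uniform bounds by weak lower semicontinuity of the norms and Fatou's lemma.

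The main obstacle is the passage to the limit in the non-coercive term $\M\times(\M\times\M_{xx})$: because $\widetilde\M^n_{xx}$ converges only weakly, one must first establish the pointwise saturation $|\widetilde\M|=1$ in the limit and then recast the equation in quasilinear parabolic form before identifying the limit, exactly as in the deterministic Alouges--Soyeur-type argument. A secondary technical point is the energy estimate of the second step: the relevant martingale has quadratic variation controlled by the very quantity being bounded, so closing the $\sup_t$-estimate requires the Burkholder--Davis--Gundy/Young absorption (or a stopping-time argument), and the $D(\A)$-bound needs the higher moments of the energy.
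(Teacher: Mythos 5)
The paper offers no proof of this theorem: it is quoted verbatim from \cite{BMM}, with the remark that it is established there by Faedo--Galerkin approximation, and your sketch follows exactly that Faedo--Galerkin/energy-estimate/stochastic-compactness route, so it is essentially the same approach as the one the paper relies on. The only caveat worth noting is that the Jakubowski--Skorokhod step yields a martingale solution on a new probability space, so to obtain the solution on the originally given filtered space (as the statement and the later Girsanov argument implicitly require) one must invoke pathwise uniqueness and Yamada--Watanabe, which the cited reference provides in this one-dimensional setting.
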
 

\section{Controllability of the system \eqref{M1.N} by low mode forcing}\label{cont.5}
In this section, we prove the exact controllability of a Galerkin $\mathcal{G}$-approximation of the system \eqref{M1.N}. Using the Fourier decomposition, we write the system \eqref{M1.N} as an infinite-dimensional system of ordinary differential equation mode (Fourier) by mode (see Appendix \ref{fourierd}). Then, we consider a Galerkin $\mathcal{G}$-approximated system \eqref{inf.sys.g1}-\eqref{inf.sys.g3} of the system \eqref{M1.N} indexed by a finite subset $\mathcal{G}_K \subset \mathbb{N}_0\times \{1,2,3\}.$ The control is acting on the system through the vector fields given
by the set $\mathcal{K}^1:=\{(0,1),(0,2),(1,1)\}\subset \mathcal{G}_K.$ Then, we show that the vector fields corresponding to the set  $\mathcal{K}^1$ are Lie bracket generating (see Definition \ref{liegen} in the Appendix). We conclude our result by applying the Chow-Rashevskii Theorem (see Theorem \ref{ChowR}). Next, we prove $\mathbb{L}^2$-approximate controllability of the 
system \eqref{M1.N}.

\subsection{Galerkin approximations of LLGEs}\label{control.galerkin}
For simplicity of the exposition, we assume $\mu_1=1=\mu_2.$ For any $K\in\Na_0$, let us consider the set $$\mathcal{G}_K:=\{(k,l)\,|\,k\leq K,\, l=1,2,3\}\subset \mathbb{N}_0\times \{1,2,3\}.$$
 We introduce the Galerkin $\mathcal{G}_K$-approximation of
the system \eqref{inf.sys.o1}-\eqref{inf.sys.o3} by projecting the system \eqref{inf.sys.o1}-\eqref{inf.sys.o3} onto the linear subspace $$S_K:=\mbox{span}\{\varphi_ie_j|\,
(i,j)\in\mathcal{G}_K\}\subset \Hh.$$ 
Hence, the resulting $\mathcal{G}_K$-Galerkin control system can be written as (component wise), for $t\in (0,T),$
\begin{align}\label{inf.sys.g1}
\frac{d}{dt}m_i^1(t) & =\sum_{\substack{n+k=i,\\ |n-k|=i}}\lambda_k \left(m_n^2(t)m_k^3(t)-m_n^3(t)m_k^2(t)\right)\notag\\
& \quad - \sum_{\substack{n+k+l=i,\\ |n-k+l|=i,\\ |n+k-l|=i,\\|k+l-n|=i}}
\lambda_k \Big\{m_l^2(t)\left(m_n^1(t)m_k^2(t)-m_n^2(t)m_k^1(t)\right)-m_l^3(t)\left(m_n^3(t)m_k^1(t)-m_n^1(t)m_k^3(t)\right)\Big\}\notag \\
& \quad + \sum_{j=1}^3\sum_{\substack{n+k=i,\\|n-k|=i\\(k,l)\in \mathcal{K}^1}}v_k^l(t)m_n^j(t)\langle e_j\times e_l,e_1\rangle,
\end{align}
\begin{align}\label{inf.sys.g2}
\frac{d}{dt}m_i^2(t) & = \sum_{\substack{n+k=i,\\|n-k|=i}}\lambda_k \left(m_n^3(t)m_k^1(t)-m_n^1(t)m_k^3(t)\right)
\notag\\
& \quad - \sum_{\substack{n+k+l=i,\\|n-k+l|=i,\\ |n+k-l|=i,\\|k+l-n|=i}}
\lambda_k \Big\{m_l^3(t)\left(m_n^2(t)m_k^3(t)-m_n^3(t)m_k^2(t)\right)-m_l^1(t) \left(m_n^1(t)m_k^2(t)-m_n^2(t)m_k^1(t)\right)\Big\}\notag \\
& \quad + \sum_{j=1}^3\sum_{\substack{n+k=i,\\|n-k|=i\\(k,l)\in \mathcal{K}^1}}v_k^l(t)m_n^j(t)\langle e_j\times e_l,e_2\rangle,
\end{align}
and
\begin{align}\label{inf.sys.g3}
\frac{d}{dt}m_i^3(t) & = \sum_{\substack{n+k=i,\\|n-k|=i}}\lambda_k \left(m_n^1(t)m_k^2(t)-m_n^2(t)m_k^1(t)\right)
\notag\\
& \quad - \sum_{\substack{n+k+l=i,\\|n-k+l|=i,\\ |n+k-l|=i,\\|k+l-n|=i}}
\lambda_k  \left\{m_l^1(t)\left(m_n^3(t)m_k^1(t)-m_n^1(t)m_k^3(t)\right) - m_l^2(t) \left(m_n^2(t)m_k^3(t)-m_n^3(t)m_k^2(t)\right)\right\}\notag \\
& \quad + \sum_{j=1}^3\sum_{\substack{n+k=i,\\|n-k|=i\\(k,l)\in \mathcal{K}^1}}v_k^l(t)m_n^j(t)\langle e_j\times e_l,e_3\rangle,
\end{align}
for $(i,j),(k,l),(n,j)\in\mathcal{G}_K.$
We now show the exact controllability of the Galerkin approximated system.
We continue to verify the full Lie rank property for the system \eqref{inf.sys.g1}-\eqref{inf.sys.g3}.
\subsection{Lie bracket}
We now compute the Lie bracket generating property of the vector fields associated with the control $\v.$
Since we need to find an appropriate set $\mathcal{K}^1$ with minimum number of control modes such that the control vector fields have Lie bracket generating property. 
 We start with $\mathcal{K}^1=\{(0,1),(0,2),(1,1)\}$ and show that the control vector fields have Lie bracket generating property. The control forcing term ${{\bf{F}}(t, \M)}$ can be written as: for $t \in [0,T]$ and $\M\in S_K$
\begin{align*}
{\bf{F}}(t,\M):&= v_0^1(t){\bf{f}}^{0,1}(\M)+v_0^2(t){\bf{f}}^{0,2}(\M)+v_1^1(t){\bf{f}}^{1,1}(\M), \\
  &=  v_0^1(t)\begin{pmatrix}
          f_{i,1}^{0,1}(\M)\\ f_{i,2}^{0,1}(\M) \\
          f_{i,2}^{0,1}(\M) \end{pmatrix} +  v_0^2(t)\begin{pmatrix}
          f_{i,1}^{0,2}(\M)\\ f_{i,2}^{0,1}(\M) \\
          f_{i,2}^{0,2}(\M) \end{pmatrix} +  v_1^1(t) \begin{pmatrix}
          f_{i,1}^{1,1}(\M)\\ f_{i,2}^{1,1}(\M) \\
          f_{i,2}^{1,1}(\M) \end{pmatrix}
\end{align*}
where 
\begin{align}\label{cont_vec}
{\bf{f}}^{k,l}(\M)&= \sum_{(i,j)\in\mathcal{G}}\left(\int_0^{2\pi}\langle \M(t,x)\times \varphi_k(x)e_l,\varphi_i(x)e_j  \rangle_{\Rr^3},dx\right)\varphi_ie_j,\notag\\
 :&= \frac{\pi}{2}\sum_{(i,j)\in\mathcal{G}}f_{i,j}^{k,l}(\M) \varphi_ie_j
\end{align} 
is the control vector field corresponding to the vector field $\varphi_i\e_j$ and $$f_{i,j}^{k,l}(\M)=\sum_{p=1}^3 \left( m_{k+i}^p+ m_{|k-i|}^p\right)\langle e_p\times e_l,e_j\rangle.$$
 Therefore, we can write the $i$-th mode component of the control for $i\in \Na_0$ as
\begin{align*}
{\bf{F}}_i(t,\M):&=v_0^1(t)\sum_{p=1}^3 2m_i^p \begin{pmatrix}
      \langle e_p\times e_1,e_1\rangle\\ \langle e_p\times e_1,e_2\rangle \\
      \langle e_p\times e_1,e_3\rangle \end{pmatrix} + 
      v_0^2(t)\sum_{p=1}^3 2m_i^p \begin{pmatrix}
      \langle e_p\times e_2,e_1\rangle\\ \langle e_p\times e_2,e_2\rangle \\
      \langle e_p\times e_2,e_3\rangle \end{pmatrix}  \\
&\qquad\qquad\qquad \qquad\qquad +
      v_1^1(t)\sum_{p=1}^3 \left(m_{i+1}^p+ m_{|i-1|}^p\right) \begin{pmatrix}
      \langle e_p\times e_1,e_1\rangle\\ \langle e_p\times e_1,e_2\rangle \\
      \langle e_p\times e_1,e_3\rangle \end{pmatrix} \\ 
      & = 2v_0^1(t) \begin{pmatrix}
          0\\  m_i^3 \\
          ( -m_i^2) \end{pmatrix} + 
     2 v_0^2(t) \begin{pmatrix}
       (-m_i^3)\\  0 \\
       m_i^1 \end{pmatrix}   +
      v_1^1(t) \begin{pmatrix}
      0\\ m_{i+1}^3+ m_{|i-1|}^3 \\
      -m_{i+1}^2- m_{|i-1|}^2 \end{pmatrix}, \\ 
       :& =v_0^1(t){\bf{f}}_i^{0,1}(\M)+v_0^2(t){\bf{f}}_i^{0,2}(\M)+v_1^1(t){\bf{f}}_i^{1,1},\\
       :& =v_0^1(t)\begin{pmatrix}
          f_{i,1}^{0,1}(\M)\\ f_{i,2}^{0,1}(\M) \\
          f_{i,3}^{0,1}(\M) \end{pmatrix} +v_0^2(t)\begin{pmatrix}
          f_{i,1}^{0,2}(\M)\\ f_{i,2}^{0,2}(\M) \\
          f_{i,3}^{0,2}(\M) \end{pmatrix}+v_1^1(t)\begin{pmatrix}
          f_{i,1}^{1,1}(\M)\\ f_{i,2}^{1,1}(\M) \\
          f_{i,3}^{1,1}(\M) \end{pmatrix}.
\end{align*}
\begin{lem}\label{lie_brac}
The vector field $\{{\bf{f}}^{k,l}(\cdot)\,|\, (k,l)\in\mathcal{K}^1\}$ is Lie bracket generating, where ${\bf{f}}^{k,l}(\cdot)$ is defined in \eqref{cont_vec}.
\end{lem}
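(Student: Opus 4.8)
The plan is to exploit the fact that the three control vector fields are \emph{linear} in $\M$ and split as an action on Fourier modes tensored with an action on $\Rr^3$. Write $R_p\colon\Rr^3\to\Rr^3$ for $R_p\mathbf v=e_p\times\mathbf v$, so that $\{R_1,R_2,R_3\}$ is a basis of $\mathfrak{so}(3)$ and $[R_a,R_b]=\pm R_c$ whenever $(a,b,c)$ is a cyclic permutation of $(1,2,3)$. From the formulas preceding the statement one reads off
\[
\mathbf f^{0,1}(\M)=(I\otimes R_1)\M,\qquad \mathbf f^{0,2}(\M)=(I\otimes R_2)\M,\qquad \mathbf f^{1,1}(\M)=(T\otimes R_1)\M,
\]
where $I$ is the identity on the $(K+1)$-dimensional mode space and $T$ is the matrix of ``multiplication by $\varphi_1$ followed by projection onto $S_K$'', i.e.\ $(T\mathbf m)_i=\mathbf m_{i+1}+\mathbf m_{|i-1|}$ with the convention $\mathbf m_{K+1}:=0$. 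The first observation to record is that $T$ is self-adjoint for the $\L$ inner product and is a \emph{nearest-neighbour} (tridiagonal) matrix whose off-diagonal entries, read off from $\varphi_1\varphi_i=\tfrac12(\varphi_{i+1}+\varphi_{|i-1|})$, are all nonzero; hence $T$ is an unreduced Jacobi matrix, it has $K+1$ simple eigenvalues, and the constant mode is a cyclic vector for $T$.

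Next I would compute $\mathfrak g:=\mathrm{Lie}\{\mathbf f^{0,1},\mathbf f^{0,2},\mathbf f^{1,1}\}$. For linear fields $X(\M)=P\M$, $Y(\M)=Q\M$ one has $[X,Y](\M)=(QP-PQ)\M$, so Lie brackets are matrix commutators and respect the tensor structure: $[\,\pi_1(T)\otimes A_1,\ \pi_2(T)\otimes A_2\,]=(\pi_1\pi_2)(T)\otimes[A_1,A_2]$. Thus $[\mathbf f^{0,1},\mathbf f^{0,2}]$ yields $I\otimes R_3$ up to sign, so $I\otimes\mathfrak{so}(3)\subseteq\mathfrak g$; bracketing $\mathbf f^{1,1}=T\otimes R_1$ against $I\otimes R_2$ and then against $I\otimes R_1$ produces $T\otimes R_3$ and $T\otimes R_2$; bracketing $T\otimes R_a$ against $T\otimes R_b$ produces $T^2\otimes R_c$, and an induction on the degree gives $T^n\otimes R_p\in\mathfrak g$ for all $n\ge0$ and $p\in\{1,2,3\}$. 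Since $\Rr[T]\otimes\mathfrak{so}(3)$ is itself a Lie subalgebra containing the three generators, we conclude $\mathfrak g=\Rr[T]\otimes\mathfrak{so}(3)$; because the spectrum of $T$ is simple its minimal polynomial has degree $K+1$, so $\dim\mathfrak g=(K+1)\cdot3=\dim S_K$.

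It then remains to verify the pointwise full-rank condition $\{X(\M):X\in\mathfrak g\}=T_\M S_K$ for every $\M\in S_K$. Identifying $\M$ with the matrix $M\in\Rr^{(K+1)\times3}$ of its Fourier coefficients, the subspace in question is $\{\pi(T)\,M\,A:\pi\in\Rr[x],\ A\in\mathfrak{so}(3)\}$, and the claim is that this exhausts $\Rr^{(K+1)\times3}$. Decomposing the mode space into the eigenlines of $T$ reduces this to an elementary linear-algebra fact: no left eigenvector of the unreduced Jacobi matrix $T$ is orthogonal to all columns of a generic $M$, and cyclicity of $T$ lets $\Rr[T]$ reach every mode, while the $\mathfrak{so}(3)$-factor supplies the three spatial directions. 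The delicate case is that of \emph{degenerate} configurations — e.g.\ when all Fourier modes of $\M$ are collinear, so $M$ has rank $\le1$ — and I expect this, together with the boundary combinatorics of the triple products $\langle\varphi_k\varphi_i,\varphi_j\rangle$ entering $T$ and its powers, to be the main obstacle; at such points one supplements the control fields with the drift of \eqref{inf.sys.g1}--\eqref{inf.sys.g3}, whose iterated brackets with the $\mathbf f^{0,p}$ restore the missing directions, which is precisely what is needed for the Chow--Rashevskii step in the proof of Theorem \ref{glo.cont.1}.
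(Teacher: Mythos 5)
Your computation of the Lie algebra of vector fields is, up to notation, exactly the paper's: the paper derives $[\mathbf f^{0,1},\mathbf f^{0,2}]=\mathbf f^{0,3}$, $[\mathbf f^{1,1},\mathbf f^{0,2}]=\mathbf f^{1,3}$ and then the general relations \eqref{lie1}--\eqref{lie3}, $[\mathbf f^{p,1},\mathbf f^{q,2}]=\mathbf f^{|p-q|,3}+\mathbf f^{p+q,3}$ and cyclic permutations, which in your tensor language are precisely the product rule $T_pT_q=T_{p+q}+T_{|p-q|}$ for the cosine multiplication operators combined with $[R_a,R_b]=\pm R_c$. Your induction producing $T^n\otimes R_p$ is the paper's induction producing all $\mathbf f^{k,l}$, since the powers of $T=T_1$ generate the $T_k$ through the Chebyshev-type recurrence. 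Your packaging via $\Rr[T]\otimes\mathfrak{so}(3)$ and the unreduced-Jacobi, simple-spectrum argument is cleaner and makes the dimension count transparent, where the paper simply asserts it; but it is the same computation.

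Where the two arguments diverge is the final step, and you have put your finger on a real difficulty. The paper passes directly from ``the Lie algebra of vector fields contains all $\mathbf f^{k,l}$'' to ``$\dim \mathrm{Lie}_\M = \dim S_{\mathcal G}$'' without checking that the evaluation map at $\M$ is onto. Since every field in this algebra is linear in $\M$, they all vanish at $\M=0$, and when the Fourier coefficient matrix $M\in\Rr^{(K+1)\times 3}$ has rank $\le 1$ the evaluations span only a proper subspace; so the full Lie rank property of Definition \ref{liegen} cannot hold at such points using the control fields alone. You correctly isolate this degenerate locus, but your proposal stops at the conjecture that iterated brackets with the drift of \eqref{inf.sys.g1}--\eqref{inf.sys.g3} restore the missing directions; that computation is not carried out, so as written your proof is incomplete at exactly this point. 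To be fair, the paper's proof has the identical gap and does not even acknowledge it, so your write-up is the more careful of the two. Closing the lemma honestly requires either performing the drift-bracket computation you sketch, or restricting the pointwise rank claim to the nondegenerate configurations relevant for Theorem \ref{glo.cont.1} (for instance, states whose modal matrix $M$ has rank at least two, for which your eigenline argument for the unreduced Jacobi matrix $T$ does go through).
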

\begin{proof}
To compute the Lie bracket, we estimate 
\begin{align*}
\left[{\bf{f}}^{0,1}(\M), {\bf{f}}^{0,2}(\M)\right]_{i,1} & = \sum_{(k,l)\in\mathcal{G}}\left( f_{k,l}^{0,1}(\M)\frac{\partial f_{i,1}^{0,2}(\M)}{\partial m_k^l} - f_{k,l}^{0,2}(\M)\frac{\partial f_{i,1}^{0,1}(\M)}{\partial m_k^l} \right) \\
& =  \sum_{(k,l)\in\mathcal{G}} f_{k,l}^{01}(\M) (-\delta_{k,l}^{i,3}) -0    = -f_{i,3}^{0,1}(\M)\\
&= 2m_i^2,\\
\left[{\bf{f}}^{0,1}(\M), {\bf{f}}^{0,2}(\M)\right]_{i,2}&= \sum_{(k,l)\in\mathcal{G}}\left( f_{k,l}^{0,1}(\M)\frac{\partial f_{i,2}^{0,2}(\M)}{\partial m_k^l} - f_{k,l}^{0,2}(\M)\frac{\partial f_{i,2}^{0,1}(\M)}{\partial m_k^l} \right) \\
&= 0- \sum_{(k,l)\in\mathcal{G}} f_{k,l}^{0,2}(\M)\delta_{k,l}^{i,3}  =  -f_{i,3}^{0,2}(\M)    \\
&= - 2m_i^1,\\
\left[{\bf{f}}^{0,1}(\M), {\bf{f}}^{0,2}(\M)\right]_{i,3}&= \sum_{(k,l)\in\mathcal{G}}\left( {\bf{f}}_{k,l}^{0,1}(\M)\frac{\partial {\bf{f}}_{i,3}^{0,2}(\M)}{\partial m_k^l}-{\bf{f}}_{k,l}^{0,2}(\M)\frac{\partial {\bf{f}}_{i,3}^{0,1}(\M)}{\partial m_k^l} \right)
= 0.
\end{align*}
Therefore, by above computation we note that $\left[{\bf{f}}^{0,1}(\M), {\bf{f}}^{0,2}(\M)\right]={\bf{f}}^{0,3}(\M)$. Again we note that
\begin{align*}
\left[{\bf{f}}^{1,1}(\M), {\bf{f}}^{0,2}(\M)\right]_{i,1} & = \sum_{(k,l)\in\mathcal{G}}\left( f_{k,l}^{1,1}(\M)\frac{\partial f_{i,1}^{0,2}(\M)}{\partial m_k^l} - f_{k,l}^{0,2}(\M)\frac{\partial f_{i,1}^{1,1}(\M)}{\partial m_k^l} \right) \\
& =  \sum_{(k,l)\in\mathcal{G}} f_{k,l}^{1,1}(\M) (-\delta_{k,l}^{i,3})- 0 =- f_{i,3}^{1,1}(\M)\\
&= m_{i+1}^2+ m_{|i-1|}^2,\\
\left[{\bf{f}}^{1,1}(\M), {\bf{f}}^{0,2}(\M)\right]_{i,2}&= \sum_{(k,l)\in\mathcal{G}}\left(f_{k,l}^{1,1}(\M)\frac{\partial f_{i,2}^{0,2}(\M)}{\partial m_k^l} -  f_{k,l}^{0,2}(\M)\frac{\partial f_{i,2}^{1,1}(\M)}{\partial m_k^l} \right) \\
&= 0- \sum_{(k,l)\in\mathcal{G}} f_{k,l}^{0,2}(\M)(\delta_{k,l}^{i+1,3}+\delta_{k,l}^{|i-1|,3})  =  - f_{i+1,3}^{0,2}(\M)- f_{|i-1|,3}^{0,2}(\M)    \\
&= - m_{i+1}^1- m_{|i-1|}^1,
\end{align*}
and
\begin{align*}
\left[{\bf{f}}^{1,1}(\M), {\bf{f}}^{0,2}(\M)\right]_{i,3}= \sum_{(k,l)\in\mathcal{G}}\left( {\bf{f}}_{k,l}^{1,1}(\M)\frac{\partial {\bf{f}}_{i,3}^{0,2}(\M)}{\partial m_k^l}-{\bf{f}}_{k,l}^{0,2}(\M)\frac{\partial {\bf{f}}_{i,3}^{1,1}(\M)}{\partial m_k^l} \right)
= 0.
\end{align*}
Therefore, by above computation we note that $\left[{\bf{f}}^{1,1}(\M), {\bf{f}}^{0,2}(\M)\right]={\bf{f}}^{1,3}(\M)$. similarly,
we get $\left[{\bf{f}}^{1,1}(\M), {\bf{f}}^{0,3}(\M)\right]= -{\bf{f}}^{1,2}(\M)$. More generally, we can compute,
\begin{align}
\left[{\bf{f}}^{p,1}(\M), {\bf{f}}^{q,2}(\M)\right] & ={\bf{f}}^{|p-q|,3}(\M)+{\bf{f}}^{p+q,3}(\M),\label{lie1} \\
\left[{\bf{f}}^{p,2}(\M), {\bf{f}}^{q,3}(\M)\right] & ={\bf{f}}^{|p-q|,1}(\M)+{\bf{f}}^{p+q,1}(\M),\label{lie2}\\
\left[{\bf{f}}^{p,3}(\M), {\bf{f}}^{q,1}(\M)\right] & ={\bf{f}}^{|p-q|,2}(\M)+{\bf{f}}^{p+q,2}(\M)\label{lie3}.
\end{align}
Indeed,
\begin{align*}
\left[{\bf{f}}^{p,1}(\M), {\bf{f}}^{q,2}(\M)\right]_{i,1} & = \sum_{(k,l)\in\mathcal{G}}\left( f_{k,l}^{p,1}(\M)\frac{\partial f_{i,1}^{q,2}(\M)}{\partial m_k^l} -  f_{k,l}^{q,2}(\M)\frac{\partial f_{i,1}^{p,1}(\M)}{\partial m_k^l} \right), \\
&= \sum_{(k,l)\in\mathcal{G}} f_{k,l}^{p,1}(\M)\frac{\partial f_{i,1}^{q,2}(\M)}{\partial m_k^l} -0 , \\
& = -\sum_{(k,l)\in\mathcal{G}} f_{k,l}^{p,1}\left(\delta_{k,l}^{q+i,3}+\delta_{k,l}^{|q-i|,3}\right),\\
& = - f_{q+i,3}^{p,1}-f_{|q-i|,3}^{p,1},\\
& = m_{p+q+i}^2+m_{|p-q-i|}^2+m_{p+|q-i|}^2+m_{|p-|q-i||}^2,\\
& = m_{p+q+i}^2+m_{|p+q-i|}^2+m_{|p-q|+i}^2+m_{||p-q|-i|}^2,\\
& = f_{i,1}^{p+q,3}(\M)+ f_{i,1}^{|p-q|,3}(\M).
\end{align*}
Similarly,
\begin{align*}
\left[{\bf{f}}^{p,1}(\M), {\bf{f}}^{q,2}(\M)\right]_{i,2} & = \sum_{(k,l)\in\mathcal{G}}\left( f_{k,l}^{p,1}(\M)\frac{\partial f_{i,2}^{q,2}(\M)}{\partial m_k^l} -  f_{k,l}^{q,2}(\M)\frac{\partial f_{i,2}^{p,1}(\M)}{\partial m_k^l} \right), \\
& = -m_{p+q+i}^1-m_{|p+q-i|}^1-m_{|p-q|+i}^1-m_{||p-q|-i|}^1,\\
& = f_{i,2}^{p+q,}(\M)+ f_{i,2}^{|p-q|,3}(\M)
\end{align*}
and
\begin{align*}
\left[{\bf{f}}^{p,1}(\M), {\bf{f}}^{q,2}(\M)\right]_{i,3} & = \sum_{(k,l)\in\mathcal{G}}\left( f_{k,l}^{p,1}(\M)\frac{\partial f_{i,3}^{q,2}(\M)}{\partial m_k^l} -  f_{k,l}^{q,2}(\M)\frac{\partial f_{i,3}^{p,1}(\M)}{\partial m_k^l} \right), \\
& = 0,\\
& = f_{i,3}^{p+q,}(\M)+ f_{i,3}^{|p-q|,3}(\M)
\end{align*}
Hence, \eqref{lie1} follows, by similar calculation \eqref{lie2}, \eqref{lie3} can be proved.  
Therefore, starting with the vector fields $\{{\bf{f}}^{k,l}(\cdot)\,|\, (k,l)\in\mathcal{K}^1\}$ and using mathematical induction and the formulas \eqref{lie1}-\eqref{lie3}, we can show that for each $\M=(m_i^j)_{(i,j)\in\mathcal{G}},$ the finite dimensional vector space Lie$_M$ where  
\begin{align*}
\mbox{Lie}_\M &:=\mbox{span}\Bigg\{\left[{\bf{f}}^{0,1}(\M), {\bf{f}}^{0,2}(\M)\right], \left[{\bf{f}}^{1,1}(\M), {\bf{f}}^{0,2}(\M)\right], \left[{\bf{f}}^{0,1}(\M), \left[{\bf{f}}^{1,1}(\M), {\bf{f}}^{0,2}(\M)\right]\right], \\
&\qquad\qquad\qquad  \left[{\bf{f}}^{1,1}(\M),\left[{\bf{f}}^{0,1}(\M), {\bf{f}}^{0,2}(\M)\right]\right],\dots\Bigg\}
\end{align*}
is same as the finite dimensional subspace 
\begin{align*}
&\mbox{span}\Big\{ \left[{\bf{f}}^{p,1}(\M), {\bf{f}}^{q,2}(\M)\right], \left[{\bf{f}}^{p,2}(\M), {\bf{f}}^{q,3}(\M)\right], \left[{\bf{f}}^{p,3}(\M), {\bf{f}}^{q,1}(\M)\right]\,\Big|\\
&\qquad \qquad\qquad\qquad\qquad\qquad\qquad (p,j),(q,j)\in\mathcal{G},j=1,2,3\Big\},
\end{align*}
i.e. we have
\begin{align*}
\mbox{Lie}_\M  & =\mbox{span}\Big\{ \left[{\bf{f}}^{p,1}(\M), {\bf{f}}^{q,2}(\M)\right], \left[{\bf{f}}^{p,2}(\M), {\bf{f}}^{q,3}(\M)\right], \left[{\bf{f}}^{p,3}(\M), {\bf{f}}^{q,1}(\M)\right]\,\Big|\\
&\qquad \qquad\qquad\qquad\qquad\qquad\qquad (p,j),(q,j)\in\mathcal{G},j=1,2,3\Big\}.
\end{align*}

Hence, the dimension of Lie$_\M=$dimension of $S_{\mathcal{G}}.$ This proves that the vector fields $\{{\bf{f}}^{k,l}(\cdot)\,|\, (k,l)\in\mathcal{K}^1\}$ are Lie bracket generating.
\end{proof}
We, now prove Theorem \ref{glo.cont.1}.
\begin{proof}[Proof of Theorem  \ref{glo.cont.1}]
Since the Galerkin approximated system is Lie-Bracket generating and symmetric, by Chow-Rashevskii Theorem, see Theorem \ref{ChowR}, the Galerkin approximated system is time-$T$ globally controllable.
\end{proof}

\subsection{$\mathbb{L}^2$-approximate controllability}
In this subsection, we prove $\mathbb{L}^2$-approximate controllability of the system \eqref{M1.N}.

\begin{proof} [Proof of Theorem \ref{L2.app.cont.1}]
Let us fix $\M_0,\M_1\in \mathbb{H}^1(0,2\pi;\mathbb{S}^2)$ and any $\epsilon>0$. We will find a control $\v \in L^2(0,T;\mathbb{L}^2)$ such that the system steer from $\M_0$ to the $\epsilon$-neighbourhood of $\M_1$ in the norm of $\mathbb{L}^2$. Let us consider the Fourier series expansion of $\M_0,\M_1$. For $K\in\Na,$
let us consider the set $$\mathcal{G}_K:=\{(k,l)\,|\,k\leq K,\, l=1,2,3\}\subset \mathbb{N}_0\times \{1,2,3\}.$$  We denote the orthogonal projection $\Pi_K:\mathbb{H}^1\to \mathcal{S}_K,$ where 
$\mathcal{S}_K=\mbox{span}\{\varphi_ie_j\,|\,(i,j)\in \mathcal{G}_K \}$. We note that 
$$\Pi_K(\M_0)\to \M_0 \quad \hbox{and} \quad \Pi_K(\M_1)\to \M_1 \quad \hbox{in}\quad \mathbb{L}^2 \quad \hbox{as} \quad K\to\infty.$$ Hence, we choose $K\in\Na$ large enough such that 
$$|\Pi_K(\M_0)-\M_0|_{\mathbb{L}^2}\leq \epsilon/3 \quad \hbox{and} \quad |\Pi_K(\M_1)-\M_1|_{\mathbb{L}^2}\leq \epsilon/3$$ and set the control modes $\mathcal{K}^1$ with $\mathcal{K}^1\subset\mathcal{G}_K.$
 We coordinatize the elements in the space $S_K$ by $\mathbf{m}$ and its orthogonal complement by $\mathbf{m}^\perp$. By Theorem \ref{glo.cont.1}, we infer that there exists a control $\v_K$
that steers the component $\mathbf{m}$ from $\mathbf{m}(0)=\Pi_K(\M_0)$ to $\mathbf{m}(T_1)=\Pi_K(\M_1)$ in time $T_1>0.$ This control is defined on an interval of length
$T_1,$ which can be chosen arbitrarily small. On the other hand, we note that $\mathbf{m}$ satisfies the system
\begin{align}
\mathbf{m}_t(t,x) & =\mu_1 \mathbf{m}(t,x) \times \mathbf{m}_{xx}(t,x) -\mu_2 \mathbf{m}(t,x) \times (\mathbf{m}(t,x) \times \mathbf{m}_{xx}(t,x))+ \mathbf{m}(t,x) \times\v(t,x),  \notag\\
& \qquad \qquad\qquad\qquad\qquad\qquad\qquad\qquad \qquad\qquad\qquad t\in (0,T],\,\, x\in (0,2\pi), \label{M1.N.m} \\
\mathbf{m}_x(t,0) & =0=\mathbf{m}_x(t,2\pi),\,\, t\in (0,T], \label{M1.N.1.m} \\
\mathbf{m}(0,x) & =\mathbf{m}_0(x),\,\,x \in(0,2\pi) \label{M1.N.2.m},
\end{align}
and  $\mathbf{m}^\perp$ satisfies the system
\begin{align}
\mathbf{m}^\perp_t(t,x) & =\mu_1 \left\{\mathbf{m}(t,x) \times \mathbf{m}^\perp_{xx}(t,x)+\mathbf{m}^\perp(t,x) \times \mathbf{m}_{xx}(t,x)+\mathbf{m}^\perp(t,x) \times \mathbf{m}^\perp_{xx}(t,x)\right\}\notag\\
&\quad -\mu_2 \big\{\mathbf{m}(t,x) \times (\mathbf{m}(t,x) \times \mathbf{m}^\perp_{xx}(t,x))+\mathbf{m}(t,x) \times (\mathbf{m}^\perp(t,x) \times \mathbf{m}_{xx}(t,x))  \notag\\
& \quad +\mathbf{m}(t,x) \times (\mathbf{m}^\perp(t,x) \times \mathbf{m}^\perp_{xx}(t,x))+\mathbf{m}^\perp(t,x) \times (\mathbf{m}(t,x) \times \mathbf{m}_{xx}(t,x)) \notag \\
& \quad + \mathbf{m}^\perp(t,x) \times (\mathbf{m}(t,x) \times \mathbf{m}^\perp_{xx}(t,x)) +\mathbf{m}^\perp(t,x) \times (\mathbf{m}^\perp(t,x) \times \mathbf{m}_{xx}(t,x))\notag \\
&\quad +\mathbf{m}^\perp(t,x) \times (\mathbf{m}^\perp(t,x) \times \mathbf{m}^\perp_{xx}(t,x)\big\},\,\,\,\,
 t\in (0,T],\,\,\,\, x\in (0,2\pi), \label{M1.N.mperp} \\
\mathbf{m}^\perp_x(t,0) & =0=\mathbf{m}^\perp_x(t,2\pi),\,\, \,\,t\in (0,T], \label{M1.N.1.mperp} \\
\mathbf{m}^\perp(0,x) & =\mathbf{m}^\perp_0(x):=(I-\Pi_N)\M_0,\,\,\,\, x \in(0,2\pi) \label{M1.N.2.mperp}.
\end{align}
Using  the vector product identities $\lb a\times b,c\rb_{\Rr^3}=\lb b,c\times a\rb_{\Rr^3}$ and $\lb a\times (b\times c),d\rb_{\Rr^3}=\lb c,(d\times a)\times b\rb_{\Rr^3}$, integration by parts, the Neumann boundary conditions, the structural condition \eqref{eqn-m-saturations} and  the estimate \eqref{exist.mw},  we note that
\begin{align*}
\frac{1}{2}\frac{d}{dt}|\mathbf{m}^\perp(t)|_{\mathbb{L}^2}^2 & =
\mu_1\int_0^{2\pi}\langle \mathbf{m}(t,x) \times \mathbf{m}^\perp_{xx}(t,x),  \mathbf{m}^\perp(t,x)\rangle_{\Rr^3}\,dx  \\
&\quad - \mu_2\int_0^{2\pi}\langle \mathbf{m}(t,x)\times(\mathbf{m}(t,x) \times \mathbf{m}^\perp_{xx}(t,x)),  \mathbf{m}^\perp(t,x)\rangle_{\Rr^3}\,dx \\
&\quad - \mu_2\int_0^{2\pi}\langle \mathbf{m}(t,x)\times(\mathbf{m}^\perp(t,x) \times \mathbf{m}_{xx}(t,x)),  \mathbf{m}^\perp(t,x)\rangle_{\Rr^3}\,dx \\
&\quad - \mu_2\int_0^{2\pi}\langle \mathbf{m}(t,x)\times(\mathbf{m}^\perp(t,x) \times \mathbf{m}^\perp_{xx}(t,x)),  \mathbf{m}^\perp(t,x)\rangle_{\Rr^3}\,dx ,\\
& \leq (\mu_1+3\mu_2)\int_0^{2\pi}|\mathbf{m}^\perp_{x}(t,x)|^2\, dx 
= (\mu_1+3\mu_2)|\mathbf{m}^\perp_{x}(t)|_{\mathbb{L}^2}^2 \leq C.
\end{align*}
  This yields $|\mathbf{m}^\perp(T_1)|_{\mathbb{L}^2}^2\leq |\mathbf{m}^\perp(0)|_{\mathbb{L}^2}^2 + CT_1.$ Hence, for $T_1$ small we have 
$$|\mathbf{m}^\perp(T_1)|_{\mathbb{L}^2}
\leq |(I-\Pi_K)(\M_0)|_{\mathbb{L}^2} + \epsilon/3 \leq \epsilon/3+\epsilon/3 = 2\epsilon / 3 .$$
Therefore, 
\begin{align*}
|\M(T_1)-M_1 |_{\mathbb{L}^2} &= |\mathbf{m}(T_1)+\mathbf{m}^\perp(T_1)-\Pi_K(M_1)+(I-\Pi_K)(\M_1)|_{\mathbb{L}^2} ,\\
  & \leq |\mathbf{m}^\perp(T_1)|_{\mathbb{L}^2} + |(I-\Pi_K)(\M_1)|_{\mathbb{L}^2}, \\
  & \leq 2\epsilon/ 3 + \epsilon/3 = \epsilon.
\end{align*}
This completes the proof.
\end{proof}

\section{support theorems of stochastic LLGEs}
In this section, we prove the support theorems, Theorem \ref{s.glo.cont.1} and 
Theorem \ref{s.L2.app.cont.1}. In the spirit of Theorem 2.8 and Theorem 2.9 in \cite{Erika+Paul}, 
we employ Girsanov Theorem (see Appendix \ref{gir.1}) to prove this result. In \cite{Erika+Paul}, the authors considered the linear equation but in our case, it is a non-linear(quasi-linear) system.  
For $K\in\Na_0$, let us consider the set $$
 \mathcal{G}_K:=\{(k,l)\,|\,k\leq K,\, l=1,2,3\}\subset \mathbb{N}_0\times \{1,2,3\}
\quad{and} \quad  
S_K:=\mbox{span}\{\varphi_ie_j|\,
(i,j)\in\mathcal{G}_K\}\subset \Hh.$$ 
Hence, the resulting $\mathcal{G}_K$-Galerkin stochastic system can be written as (component wise), for $t\in (0,T).$
\begin{align}\label{s.inf.sys.g1}
d m_i^1(t) & =\sum_{\substack{n+k=i,\\ |n-k|=i}}\lambda_k \left(m_n^2(t)m_k^3(t)-m_n^3(t)m_k^2(t)\right)\,dt\notag\\
& \quad - \sum_{\substack{n+k+l=i,\\ |n-k+l|=i,\\ |n+k-l|=i,\\|k+l-n|=i}}
\lambda_k \Big\{m_l^2(t)\left(m_n^1(t)m_k^2(t)-m_n^2(t)m_k^1(t)\right)-m_l^3(t)\left(m_n^3(t)m_k^1(t)-m_n^1(t)m_k^3(t)\right)\Big\}\,dt\notag \\
& \quad + \sum_{j=1}^3\sum_{\substack{n+k=i,\\|n-k|=i\\(k,l)\in \mathcal{K}^1}} m_n^j(t)\langle e_j\times e_l,e_1\rangle d\beta_k^l(t),
\end{align}
\begin{align}\label{s.inf.sys.g2}
d m_i^2(t) & = \sum_{\substack{n+k=i,\\|n-k|=i}}\lambda_k \left(m_n^3(t)m_k^1(t)-m_n^1(t)m_k^3(t)\right)\,dt
\notag\\
& \quad - \sum_{\substack{n+k+l=i,\\|n-k+l|=i,\\ |n+k-l|=i,\\|k+l-n|=i}}
\lambda_k \Big\{m_l^3(t)\left(m_n^2(t)m_k^3(t)-m_n^3(t)m_k^2(t)\right)-m_l^1(t) \left(m_n^1(t)m_k^2(t)-m_n^2(t)m_k^1(t)\right)\Big\}\,dt\notag \\
& \quad + \sum_{j=1}^3\sum_{\substack{n+k=i,\\|n-k|=i\\(k,l)\in \mathcal{K}^1}}m_n^j(t)\langle e_j\times e_l,e_2\rangle d\beta_k^l(t),
\end{align}
and
\begin{align}\label{s.inf.sys.g3}
d m_i^3(t) & = \sum_{\substack{n+k=i,\\|n-k|=i}}\lambda_k \left(m_n^1(t)m_k^2(t)-m_n^2(t)m_k^1(t)\right)\,dt
\notag\\
& \quad - \sum_{\substack{n+k+l=i,\\|n-k+l|=i,\\ |n+k-l|=i,\\|k+l-n|=i}}
\lambda_k  \left\{m_l^1(t)\left(m_n^3(t)m_k^1(t)-m_n^1(t)m_k^3(t)\right) - m_l^2(t) \left(m_n^2(t)m_k^3(t)-m_n^3(t)m_k^2(t)\right)\right\}\,dt\notag \\
& \quad + \sum_{j=1}^3\sum_{\substack{n+k=i,\\|n-k|=i\\(k,l)\in \mathcal{K}^1}} m_n^j(t)\langle e_j\times e_l,e_3\rangle d\beta_k^l(t),
\end{align}
for $(i,j),(k,l),(n,j)\in\mathcal{G}_K.$

\begin{proof}[Proof of Theorem \ref{s.glo.cont.1}]
 Let $T > 0$, $K\in\Na_0$, $R$ and $\eps$ be given as in hypothesis. Let $\mathbf{m}_0, \mathbf{m}_1\in S_K$. We want to find a $\delta>0$ such that there holds
$$\mathbb{P}\Big(|\mathbf{m}(T,\widetilde{\mathbf{m}}_0)-\mathbf{m}_1|_K \leq \eps\Big)  >\delta, $$
for any $\widetilde{\mathbf{m}}_0 \in 
\Big\{ \mathbf{m}: |\mathbf{m}- \mathbf{m}_0|_K \leq R \Big\}
$.

Let us denote the solution of the stochastic system \eqref{s.inf.sys.g1}-\eqref{s.inf.sys.g3} by $\mathbf{m}^s$ and solution of the controlled deterministic system \eqref{inf.sys.g1}-\eqref{inf.sys.g3} by $\mathbf{m}^c.$ By Theorem \ref{glo.cont.1}, the finite dimensional system \eqref{inf.sys.g1}-\eqref{inf.sys.g3} is exactly controllable. Therefore, there exists a control $\v\in \mathbb{L}^2$ such that
\begin{align}\label{cont.6}
\mathbf{m}^c(0)=\mathbf{m}_0 \quad \hbox{and} \quad
\mathbf{m}^c(T)=\mathbf{m}_1.
\end{align}
 Let $\mathbb{Q}$ be the probability measure obtained by using the Girsanov transformation with $$\mathcal{Q}(T)=\frac{d\,\mathbb{Q}}{d\,\mathbb{P}},$$
 where
$$
\mathcal{Q}(T)=e^{- \sum_{(k,l)\in\mathcal{K}^1}\int_0^T v_k^l(t) d\beta_{k}^l(t)-\frac{1}{2}\sum_{(k,l)\in\mathcal{K}^1}\int_0^T |v_k^l(s)|^2\,ds}.
$$ 
By Girsanov Theorem, we can say that for each $(k,l) 
\in \mathcal{K}^1$, 
$$\beta_{k,\v}^l(t):=\beta_{k}^l(t)+ \int_0^t v_k^l(s)\,ds $$ is the Brownian motion with respect to the new measure $\mathbb{Q}.$ Let $\mathbf{m}^s_v$ be the solution of \eqref{s.inf.sys.g1}-\eqref{s.inf.sys.g3} where the Brownian motion $\beta_k^l(t)$ is replaced with $\beta_{k,\v}^l(t)$.
We have
\begin{align}\label{prob1.1}
\mathbb{P}\left(|\mathbf{m}^s(T)-\mathbf{m}_1|_{\mathbb{L}^2} \leq \eps \right)
=\mathbb{Q}\left(|\mathbf{m}^s_v(T)-\mathbf{m}_1|_{\mathbb{L}^2}\leq \eps \right)
& =\mathbb{E}^{\mathbb{Q}}\left[1_{\{|\mathbf{m}^s_v(T)-\mathbf{m}_1|_{\mathbb{L}^2}\leq \eps \}}\right]\notag\\
&=\mathbb{E}^{\mathbb{P}}\left[\mathcal{Q}(T)1_{\{|\mathbf{m}_\v^s(T)-\mathbf{m}_1|_{\mathbb{L}^2} \leq \eps \}}\right].
\end{align}
Therefore, to prove the result, we need to estimate the term  $\mathbb{E}^{\mathbb{P}}\left[\mathcal{Q}(T)1_{\{|\mathbf{m}_\v^s(T)-\mathbf{m}_1|_{\mathbb{L}^2} \leq \eps \}}\right].$
Let $U$ and $V$ be any two non-negative random variables with $ \mathbb{E}^{\mathbb{P}} \left[V^{-1} \right]<\infty$. Then, using H\"{o}lder's inequality we get
\begin{align}\label{eq pq0}
\mathbb{E}^{\mathbb{P}} \left[
(UV)^{\frac{1}{2}} V^{-\frac{1}{2}}  \right]
\leq \left( \mathbb{E}^{\mathbb{P}} \left[UV \right] \right)^{\frac{1}{2}}
\left( \mathbb{E}^{\mathbb{P}} \left[V^{-1} \right] \right)^{\frac{1}{2}},
\end{align}
which yields 
\begin{align}
\mathbb{E}^{\mathbb{P}} \left[
(UV)^{\frac{1}{2}} V^{-\frac{1}{2}}  \right]^2
\leq \left( \mathbb{E}^{\mathbb{P}} \left[UV \right] \right)
\left( \mathbb{E}^{\mathbb{P}} \left[V^{-1} \right] \right).
\end{align}
This implies 
\begin{align}\label{eq pq}
\mathbb{E}^{\mathbb{P}} \left[UV \right]
\geq \dfrac{\left( \mathbb{E}^{\mathbb{P}} \left[(UV )^{\frac{1}{2}} 
 V^{-\frac{1}{2}} \right] \right)^2}{\mathbb{E}^{\mathbb{P}} \left[V^{-1} \right]}
=  \dfrac{\left( \mathbb{E}^{\mathbb{P}} [U ^{\frac{1}{2}} ] \right)^2}{\mathbb{E}^{\mathbb{P}} \left[V^{-1} \right]}
 .
\end{align}
Hence, if we choose $U=1_{\{|\mathbf{m}_\v^s(T)-\mathbf{m}_1|\leq \eps\}}$ and $V=\mathcal{Q}(T)$ in \eqref{eq pq0}, then we have from \eqref{eq pq}
\begin{align}
\mathbb{E}^{\mathbb{P}}\left[1_{\{|\mathbf{m}_\v^s(T)-\mathbf{m}_1|_{\mathbb{L}^2} \leq \eps\}}\right]
&\geq \frac{\left(\mathbb{E}^{\mathbb{P}}\left[1_{\{|\mathbf{m}_\v^s(T)-\mathbf{m}_1|_{\mathbb{L}^2} \leq \eps\}}\right]\right)^2}{\mathbb{E}^{\mathbb{P}}\left[\left|\mathcal{Q}^{-1}(T)\right|\right]}.
\end{align}
It is easy to show that there exists $C>0$ such that
\begin{align}\label{prob2}
\mathbb{E}^{\mathbb{P}}\left[\left|\mathcal{Q}^{-1}(T)\right|\right]\leq C e^{C\sum_{(k,l)\in\mathcal{K}^1}\int_0^T |v_k^l(s)|^2\,ds}.
\end{align}
Indeed, the process  $\mathcal{Q}^{-1}(t)$ satisfies the stochastic differential equation
\begin{align}
\left\{\begin{array}{ll}
d \mathcal{Q}^{-1}(t)& = -\mathcal{Q}^{-1}(t)\sum_{(k,l)\in\mathcal{K}^1} v_k^l(t) d\beta_{k}^l(t)+ \mathcal{Q}^{-1}(t)\sum_{(k,l)\in\mathcal{K}^1} |v_k^l(t)|^2\,dt,\quad t\in (0,T), \\
 \mathcal{Q}^{-1}(0)& =1.
\end{array}\right.
\end{align}
Hence, for $t\in (0,T)$
\begin{align*}
 \mathcal{Q}^{-1}(t)& = 1+ \int_0^t\mathcal{Q}^{-1}(s)\sum_{(k,l)\in\mathcal{K}^1} v_k^l(s) d\beta_{k}^l(s)+ \int_0^t\mathcal{Q}^{-1}(s)\sum_{(k,l)\in\mathcal{K}^1} |v_k^l(s)|^2\,ds.
\end{align*}
We now take supremum followed by expectation raise to the power $p$. Then using the Burkholder-Davis-Gundy inequality and the Young inequality, we achieve
\begin{align*}
& \mathbb{E}^{\mathbb{P}}\Big[\sup_{0\leq t\leq T}\Big|\mathcal{Q}^{-1}(t)\Big|\Big]
 \\
 & \leq  1+ \mathbb{E}^{\mathbb{P}}\Big[\sup_{0\leq t\leq T}
 \Big|\int_0^t\mathcal{Q}^{-1}(s)\sum_{(k,l)\in\mathcal{K}^1} v_k^l(s) d\beta_{k}^l(s)\Big|\Big]
 + \mathbb{E}^{\mathbb{P}} \Big[\int_0^T\mathcal{Q}^{-1}(t)\sum_{(k,l)\in\mathcal{K}^1} |v_k^l(t)|^2\,dt\Big] \\
 & \leq  1+C \mathbb{E}^{\mathbb{P}}\Big[\int_0^T (\mathcal{Q}^{-1})^2(t)\sum_{(k,l)\in\mathcal{K}^1} |v_k^l(t)|^2\,dt \Big]^\frac{1}{2}
 + \mathbb{E}^{\mathbb{P}} \Big[\int_0^T\mathcal{Q}^{-1}(t)\sum_{(k,l)\in\mathcal{K}^1} |v_k^l(t)|^2\,dt \Big] \\
 & \leq  1+C \mathbb{E}^{\mathbb{P}}\Big[\Big(\sup_{0\leq t\leq T}\mathcal{Q}^{-1}(t)\Big)^\frac{1}{2}\Big(\int_0^T \mathcal{Q}^{-1}(t)\sum_{(k,l)\in\mathcal{K}^1} |v_k^l(t)|^2\,dt\Big)^\frac{1}{2} \Big] \\
 &\qquad\qquad+  \mathbb{E}^{\mathbb{P}} \Big[\int_0^T\mathcal{Q}^{-1}(t)\sum_{(k,l)\in\mathcal{K}^1} |v_k^l(t)|^2\,dt \Big] \\
 & \leq  1+ \frac{1}{2}\mathbb{E}^{\mathbb{P}}\Big[\sup_{0\leq t\leq T}\Big|\mathcal{Q}^{-1}(t)\Big|\Big]
 + C \mathbb{E}^{\mathbb{P}} \Big[\int_0^T\mathcal{Q}^{-1}(t)\sum_{(k,l)\in\mathcal{K}^1} |v_k^l(t)|^2\,dt \Big]. \end{align*}
 This yields 
 \begin{align*}
&\frac{1}{2} \mathbb{E}^{\mathbb{P}}\Big[\sup_{0\leq t\leq T}\Big|\mathcal{Q}^{-1}(t)\Big|\Big]
 \leq  1 + C \mathbb{E}^{\mathbb{P}} \Big[\int_0^T\mathcal{Q}^{-1}(t)\sum_{(k,l)\in\mathcal{K}^1} |v_k^l(t)|^2\,dt \Big].
\end{align*}
Finally, the Gronwall inequality yields us
\begin{align*}
 \mathbb{E}^{\mathbb{P}}\Big[\sup_{0\leq t\leq T}\Big|\mathcal{Q}^{-1}(t)\Big|\Big]
 \leq C e^{C\sum_{(k,l)\in\mathcal{K}^1}\int_0^T |v_k^l(s)|^2\,ds}.
\end{align*}
Using \eqref{cont.6}, we obtain
\begin{align*}
\mathbb{E}^{\mathbb{P}}\left[1_{\{|\mathbf{m}^s_v(T)-\mathbf{m}_1|_{\mathbb{L}^2} \leq \eps\}}\right]& = \mathbb{E}^{\mathbb{P}}\left[1_{\{|\mathbf{m}_\v^s(T)-\mathbf{m}^c(T)|_{\mathbb{L}^2}\leq \eps \}}\right] .
\end{align*}
We recall that $\mathbf{m}^s$ satisfies
\begin{align*}
d\,\mathbf{m}^s_v(t) &= \Big\{\mathbf{m}^s_v(t)\times (\mathbf{m}^s_v)_{xx}(t)-\mathbf{m}^s_v(t)\times \Big(\mathbf{m}^s_v(t)\times (\mathbf{m}^s_v)_{xx}(t)\Big)\Big\}\, dt + \mathbf{m}^s_v(t)\times d\,\pmb{\beta}_v(t)\\
& = \Big\{(\mathbf{m}^s_v)_{xx}(t)+\mathbf{m}^s_v(t)\times (\mathbf{m}^s_v)_{xx}(t)
+\mathbf{m}^s_v(t)\big|(\mathbf{m}^s_v)_{x}(t)\big|_{\Rr^3}^2 \Big\}\, dt+ \mathbf{m}^s_v(t)\times d\,\pmb{\beta}_v(t)
\end{align*}
where $\pmb{\beta}_{\v}(t)=\sum_{(k,l)\in\mathcal{K}^1}\varphi_ke_l\beta_{k,\v}^l(t)$
and
$\mathbf{m}^c$ satisfies
\begin{align*}
d\,\mathbf{m}^c(t)= \Big\{\mathbf{m}^c_{xx}(t)+\mathbf{m}^c(t)\times \mathbf{m}^c_{xx}(t)+\mathbf{m}^c(t)|\mathbf{m}^c_{x}(t)|_{\Rr^3}^2 \Big\}dt+ \mathbf{m}^c(t)\times \v(t)\, dt.
\end{align*}
Hence, $\u=\mathbf{m}^s_v-\mathbf{m}^c$ satisfies
\begin{align}\label{diffsc.1}
d\u(t)&=\Big\{\u_{xx}(t)+\mathbf{m}^s_v(t)\times \u_{xx}(t)+\u(t)\times \mathbf{m}^c_{xx}(t) \notag \\
& \quad + \u(t)|(\mathbf{m}_v^s)_x(t)|_{\Rr^3}^2 + \mathbf{m}^c(t)\Big(|(\mathbf{m}^s_v)_{x}(t)|_{\Rr^3}^2-|\mathbf{m}^c_{x}(t)|_{\Rr^3}^2 \Big)\Big\}\,dt \notag \\
&\quad + \u(t)\times \v(t)\,dt + \mathbf{m}^s(t) \times \,d\pmb{\beta}(t)
\end{align}
where $\pmb{\beta}_{\v}(t)=\pmb{\beta}(t)+\int_0^t\v(s)\,ds$. Applying the It\^{o} Lemma to the
function $\phi(x)=\dfrac{1}{2}|x|_{\mathbb{L}^2}^2$ and to the process $u(t),\,\,t \in [0,T]$, we obtain
\begin{align*}
\frac{1}{2} |u(T)|_{\mathbb{L}^2}^2 -\frac{1}{2} |u(0)|_{\mathbb{L}^2}^2
& =\int_0^T\int_0^{2\pi}
\Big\{-|u_x(t)|_{\Rr^3}^2 
 - \langle\mathbf{m}^s(t)\times\u_{xx}(t)),\u(t) \rangle_{\Rr^3} \\
 & \quad +|\u(t)|_{\Rr^3}^2|\mathbf{m}_x^s(t)|_{\Rr^3}^2 
 + \Big\langle\mathbf{m}^c(t)\left(|\mathbf{m}^s_{x}(t)|_{\Rr^3}^2-|\mathbf{m}^c_{x}(t)|_{\Rr^3}^2 \right),\u(t) \Big\rangle_{\Rr^3}
\Big\}\,dx\,dt \\
& \quad + \sum_{(k,l)\in\mathcal{K}^1}\int_0^T\int_0^{2\pi} \big\langle \mathbf{m}^s(t) \times \varphi_k(x)e_l,\u(t) \big\rangle_{\Rr^3} dx \,d\beta_{k,\v}^l(t)\\
& \quad +\frac{1}{2} \sum_{(k,l)\in\mathcal{K}^1}\int_0^T\int_0^{2\pi}|\mathbf{m}^s(t) \times \varphi_k(x)e_l|_{\Rr^3}^2\,dx\,dt.
\end{align*}
Using $u(0)=0$ and proceeding as in the proof of Lemma 5.2 of \cite{BMM}, we obtain
\begin{align}\label{gron P}
\mathbb{E}^{\mathbb{P}}|\u(T)|_{\mathbb{L}^2}^2 
& \leq   C \Big(\mathbb{E}^{\mathbb{P}}\int _0^T \left|\pmb{\beta}(t)\right|\, dt\Big) \, e^{\int_0^T\phi_C(s)ds}
\end{align}
for some integrable function $\phi_C$ depending on the spatial derivatives of $\mathbf{m}_v^s$ and $\mathbf{m}^c$.
By Chebyshev's inequality we have
\begin{align*}
\mathbb{P}\left(|\u(T)|_{\mathbb{L}^2}\geq \eps \right)\leq 
\frac{1}{\eps^2}\mathbb{E}^{\mathbb{P}}|\u(T)|_{\mathbb{L}^2}^2 .
\end{align*}
Consequently, we obtain for any $\epsilon>0$ large enough 
\begin{align}\label{prob3}
\mathbb{P}\left(|\u(T)|_{\mathbb{L}^2}\leq \eps \right)
\geq 1-\frac{1}{\eps^2}\mathbb{E}^{\mathbb{P}}|\u(T)|_{\mathbb{L}^2}^2>0.
\end{align}
Finally, using \eqref{prob2} and \eqref{prob3} we get from \eqref{prob1}  
$$
\mathbb{P}\left(|\mathbf{m}^s(T)-\mathbf{m}_1 |_{\mathbb{L}^2}\leq \eps\right) 
 >0.
$$
This completes the proof.
\end{proof}

\begin{proof}[Proof of Theorem \ref{s.L2.app.cont.1}]
Let $\mathbf{M}^s(\cdot)$ be the solution of the system \eqref{sM1.N} with $\mathbf{M}^s(0)=\mathbf{M}_0$. Since the system \eqref{M1.N} is approximately controllable, there exists a control $\v \in L^2(0,T;\mathbb{L}^2)$ of the form \eqref{control.1} such that the solution $\mathbf{M}^c$ of the system \eqref{M1.N} satisfies 
\begin{align}
|\mathbf{M}^c(T)- \mathbf{M}_1|_{\mathbb{L}^2} \leq \dfrac{\eps}{3}.
\end{align}
Proceeding as in the proof of Theorem \ref{s.glo.cont.1}, we consider the Girsanov transformation
 $$\mathcal{Q}(T)=\frac{d\,\mathbb{Q}}{d\,\mathbb{P}},$$
 where
$$
\mathcal{Q}(T)=e^{- \sum_{(k,l)\in\mathcal{K}^1}\int_0^T v_k^l(t) d\beta_{k}^l(t)-\frac{1}{2}\sum_{(k,l)\in\mathcal{K}^1}\int_0^T |v_k^l(s)|^2\,ds}.
$$ 
By Girsanov Theorem, we can say that for each $(k,l) 
\in \mathcal{K}^1$, 
$$\beta_{k,\v}^l(t):=\beta_{k}^l(t)+ \int_0^t v_k^l(s)\,ds $$ is the Brownian motion with respect to the new measure $\mathbb{Q}.$ Let $\mathbf{M}^s_v$ be the solution of \eqref{M1.N} where the Brownian motion $\beta_k^l(t)$ is replaced with $\beta_{k,\v}^l(t)$. As in the proof of Theorem \ref{s.glo.cont.1}, we will find
\begin{align}\label{prob1}
\mathbb{P}\left(|\mathbf{M}^s(T)-\mathbf{M}_1|_{\mathbb{L}^2} \leq \eps \right)
&\geq \frac{\left(\mathbb{E}^{\mathbb{P}}\left[1_{\{|\mathbf{m}_\v^s(T)-\mathbf{m}_1|_{\mathbb{L}^2} \leq \eps\}}\right]\right)^2}{\mathbb{E}^{\mathbb{P}}\left[\left|\mathcal{Q}^{-1}(T)\right|\right]}.
\end{align} 

\end{proof}
\begin{appendix}

\section{Geometric control theory and controllability of non-linear systems}\label{app geo cont}
In view of \cite{BoSi}, we present some basic facts about the controllability of non-linear finite dimensional systems. We introduce the concepts of Lie bracket and state a sufficient condition which guarantees the controllability of the the system.
For $N,r\in\mathbb{N}$, let us consider the control system in $\mathbb{R}^N,$
\begin{align}\label{controls.1}
\dot{u}(t)& =F(u(t),v(t)),\quad t>0;
\end{align}
\begin{itemize}
\item $u:[0,\infty)\to \mathbb{R}^N$ is the state of the system;
\item $V\subset \mathbb{R}^r$ is the set of control values and $v :[0, \infty)\to V$ is a control;
\item $F$ is a smooth function of its arguments and that $u$ is regular enough in such a way that equation \eqref{controls.1} with the initial condition $u(0) = u_0\in \mathbb{R}^N$ has an
unique solution in $[0,\infty)$.
\end{itemize}
Let $t\in [0,\infty)\mapsto u(t; u_0, v)$ be the solution of \eqref{controls.1} starting from $u_0 \in \mathbb{R}^N$ at $t = 0$ and corresponding to a control function $u.$
We recall the following definitions:
\begin{defi}
Let $u_0\in\mathbb{R}^N.$
\begin{itemize}
\item (Reachable set from $u_0$ at time $T\geq 0$): 
$$
\mathcal{A}(T, u_0 ) = \big\{u_1 \in \mathbb{R}^N\,| \,\,\exists\, v: [0, T ] \to V,\, u\left(T; u_0 , u\right) = u_1 \big\}.
$$
\end{itemize}
\end{defi}
\begin{defi}[Controllable]
The system \eqref{controls.1} is said to be
 controllable in time $T$ if for every $u_0 \in \mathbb{R}^N,\, \mathcal{A}(T,u_0 ) = \mathbb R^N$.
\end{defi}
\begin{defi}
A vector space $X$ over field $\mathbb{R}$ endowed with an operation $[\cdot,\cdot]: X \times X \to X$
is said to be a Lie algebra if it satisfies the following
\begin{itemize}
\item (bilinear): for every $\lambda_1 , \lambda_2 \in \mathbb{R},$ $f,g,f_1,f_2,g_1,g_2 \in X,$
\begin{align*}
[f, \lambda_1 g_1 + \lambda_2 g_2 ] = \lambda_1 [f, g_1 ] + \lambda_2 [f, g_2 ], \\
[\lambda_1 f_1 + \lambda_2 f_2 , g] = \lambda_1 [f_1 , g] + \lambda_2 [f_2 , g];
\end{align*}
\item (antisymmetric): for every $f,g \in X,$
$[f,g]=-[g,f];
$
\item (Jacobi identity): for every $f,g,h \in X$
$$[f, [g, h]] + [h, [f, g]] + [g, [h, f ]] = 0.$$
\end{itemize}
\end{defi}

Let Vec($\mathbb{R}^N $) be the vector space of all smooth vector fields in $\mathbb{R}^N $. Let us first define the Lie bracket
between two vector fields $f,g\in$Vec($\mathbb{R}^N $), as the vector field defined by
$$
[f, g](x) = Dg(x)f (x) - Df (x)g(x),\quad x\in \mathbb{R}^N.
$$

Here, for a given vector field $f = (f_1 ,\dots, f_N )^t\in \mathbb{R}^N$, $Df$ is the matrix of partial derivatives
of the components of $f$ given by
$$
\begin{pmatrix}
\partial_1 f_1 & \cdots & \partial_N f_1 \\
\cdot & \cdots & \cdot \\
\cdot & \cdots & \cdot \\
\partial_1 f_N & \cdots & \partial_N f_N
\end{pmatrix}.
$$
We note that that (Vec($\mathbb{R}^N$), [·, ·]) is a Lie algebra.

{\bf Example:} Let  $f(x)=Ax,\,g(x)=Bx,\,x\in\mathbb{R}^N$  for $A,B\in \mathbb{R}^{N\times N}$. 

Then, $f,g \in$Vec($\mathbb{R}^N)$ and $[f,g](x)=ABx-BAx,\,x\in\mathbb{R}^N$.
\begin{defi}
Let $\mathcal{F}$ be a family of vector fields in Vec($\mathbb{R}^N$) . We call Lie($\mathcal{F}$) the smallest 
sub-algebra of Vec($\mathbb{R}^N$) containing $\mathcal{F}$.

In particular, Lie($\mathcal{F}$) is the span of all vector fields of $\mathcal{F}$ and of their iterated Lie brackets of any order:
$$
   \mbox{Lie}(\mathcal{F})=\mbox{span}\big\{f_1 , [f_1 , f_2 ], [f_1 , [f_2 , f_3 ]], [f_1 , [f_2 , [f_3 , f_4 ]]], \dots, |\,\, f_1 , f_2 ,\dots\in\mathcal{F}\big\}.
$$
\end{defi}

\begin{defi}
The family $\mathcal{F}$ is said to be Lie bracket generating at a point $x\in\mathbb{R}^N$ if the
dimension of Lie$_x(\mathcal{F}):= \{f (x) |\, f \in \mbox{Lie}(\mathcal{F})\}$ is equal to $N.$

The family $\mathcal{F}$ is said to be Lie bracket generating if this condition is verified for every $x\in\mathbb{R}^N$.
\end{defi}

We note that in general Lie($\mathcal{F}$) is an infinite-dimensional subspace of Vec($\mathbb{R}^n$) , while
Lie$_x(\mathcal{F})$ is a subspace of $\mathbb{R}^N$.

\begin{defi}
A family of vector fields $\mathcal{F}$ is said to be symmetric if $f \in\mathcal{F}$ implies  $-f\in \mathcal{F}$.
\end{defi}

 Let us denote the collection of smooth and complete vector fields parametrized by elements in $V\subset \mathbb{R}^r$ by
$$
\mathcal{F}_V:=\big\{F(\cdot, v):\mathbb{R}^N\to \mathbb{R}^N ,\,\,  v\in V\big\}.
$$

\begin{thm}[Chow-Rashevskii, ] \label{ChowR}
If $\mathcal{F}$ is Lie bracket generating and symmetric,
then for every $u_0\in\mathbb{R}^N$ we have $\mathcal{A}^T_{\mathcal{F}_V}(u_0) =\mathbb{R}^N$
\end{thm}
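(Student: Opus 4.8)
The plan is to recover the classical orbit-theoretic proof of Chow--Rashevskii. Write $e^{tf}$ for the flow of a (complete) vector field $f$; since $\mathcal F$ is symmetric, $e^{-tf}=e^{t(-f)}$ is again the flow of a member of $\mathcal F$, so the set $\mathcal A(u_0):=\bigcup_{T>0}\mathcal A(T,u_0)$ of points reachable from $u_0$ by admissible trajectories of positive duration coincides with the \emph{orbit} of $u_0$, i.e. the set of all $\big(e^{t_mf_m}\circ\cdots\circ e^{t_1f_1}\big)(u_0)$ with $f_i\in\mathcal F$, $t_i\in\mathbb R$. I will show $\mathcal A(u_0)=\mathbb R^N$ by proving $\mathcal A(u_0)$ is open and then invoking connectedness of $\mathbb R^N$; the normalization to a fixed terminal time $T$ comes at the end from the fact that in the situation at hand $\mathcal F_V$ is a cone ($F(u,v)=\sum_i v_if_i(u)$, $V=\mathbb R^r$), so trajectories may be time-rescaled freely.

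The technical core is the following rank lemma: because $\mathcal F$ is bracket generating, $\mathcal A(u_0)$ has nonempty interior. Let $k$ be the maximal rank of the differential of a composition map $\Phi(t_1,\dots,t_m)=\big(e^{t_mf_m}\circ\cdots\circ e^{t_1f_1}\big)(u_0)$, taken over all $m$, all $f_1,\dots,f_m\in\mathcal F$, and all points $(t_1,\dots,t_m)$ with $t_i>0$; clearly $1\le k\le N$. Fix a choice realizing $k$ at a point $\bar t$ with $\Phi(\bar t)=y$; since the rank is locally maximal, $\Phi$ has constant rank $k$ near $\bar t$ and its image there is an embedded $k$-dimensional submanifold $\Sigma\ni y$. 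For any $g\in\mathcal F$ and small $s>0$ the point $e^{sg}(y)$ is reached by appending one more flow, so by maximality of $k$ it still lies on $\Sigma$; letting $s\to0$ gives $g(y)\in T_y\Sigma$. The same argument applies at every point of $\Sigma$, so every $g\in\mathcal F$ is tangent to $\Sigma$; since the vector fields tangent to a submanifold form a Lie subalgebra of $\mathrm{Vec}(\mathbb R^N)$, every iterated Lie bracket of elements of $\mathcal F$ is tangent to $\Sigma$ as well. Hence $\mathrm{Lie}_y(\mathcal F)\subseteq T_y\Sigma$, which has dimension $k$. The bracket generating hypothesis forces $k=N$, so $\Phi$ is a submersion near $\bar t$ and maps a neighbourhood of $\bar t$ onto an open neighbourhood of $y$ contained in $\mathcal A(u_0)$.

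To upgrade ``nonempty interior'' to ``open'': let $z\in\mathcal A(u_0)$ be arbitrary. Applying the rank lemma with base point $z$ yields a point $y$ reachable from $z$ together with an open set $W\ni y$, $W\subseteq\mathcal A(z)\subseteq\mathcal A(u_0)$. By symmetry there is an admissible trajectory from $y$ back to $z$, given by a composition of flows $\Xi=e^{s_qh_q}\circ\cdots\circ e^{s_1h_1}$, which is a diffeomorphism of $\mathbb R^N$; thus $\Xi(W)$ is an open neighbourhood of $z=\Xi(y)$, and each point of $\Xi(W)$ is reached from $u_0$ by first going to a point of $W$ and then applying the flows $h_1,\dots,h_q$, so $\Xi(W)\subseteq\mathcal A(u_0)$ and $z$ is interior. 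Hence $\mathcal A(u_0)$ is open. Symmetry also shows the sets $\mathcal A(w)$, $w\in\mathbb R^N$, are pairwise equal or disjoint (if they meet at a point $p$, then $p$ is reachable from both and, reversing trajectories, both are reachable from $p$, hence from each other), so $\{\mathcal A(w)\}$ partitions $\mathbb R^N$ into open sets; connectedness leaves only one, i.e. $\mathcal A(u_0)=\mathbb R^N$. For the terminal time: given $u_1$, reach it from $u_0$ in some time $S>0$; rescaling every vector field used along the way by the factor $S/T>0$ (legitimate since $\mathcal F_V$ is a cone here) reproduces the same trajectory run in time $T$, and alternatively one may keep all rates fixed and pad by inserting a back-and-forth $e^{-\tau g}\circ e^{\tau g}$ at $u_0$. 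Either way $u_1\in\mathcal A^T_{\mathcal F_V}(u_0)$, so $\mathcal A^T_{\mathcal F_V}(u_0)=\mathbb R^N$.

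The main obstacle is the rank lemma of the second paragraph: converting the purely algebraic bracket generating condition into the geometric statement that the reachable set has nonempty interior. Two points need care, namely the selection of a configuration realizing the maximal rank (the rank of $\Phi$ jumps up only on an open dense set of parameters, so the ``locally constant rank'' picture must be argued there) and the claim that tangency of all members of $\mathcal F$ to $\Sigma$ propagates to their iterated brackets --- this is where the Jacobi identity enters, through the fact that vector fields tangent to a submanifold are closed under bracket. Once the rank lemma is established, the openness/connectedness argument and the time reparametrization are routine.
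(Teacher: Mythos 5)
The paper itself offers no proof of Theorem \ref{ChowR}: it is quoted in the appendix as a classical fact of geometric control theory, with the reader referred to Boscain--Sigalotti \cite{BoSi}, Agrachev--Sarychev \cite{Agra_Sary.3} and Jurdjevic \cite{Jurd}. Your reconstruction is the standard orbit-theoretic argument (Krener's maximal-rank lemma, then openness of the reachable sets plus connectedness of $\mathbb{R}^N$), and it is essentially correct; in particular you correctly isolate the two delicate points, namely that lower semicontinuity of the rank forces constant rank near a configuration realizing the maximum (so the image is locally an embedded $k$-submanifold to which every appended flow, and hence every element of $\mathcal{F}$ and every iterated bracket, is tangent), and that the fixed-terminal-time conclusion $\mathcal{A}^T_{\mathcal{F}_V}(u_0)=\mathbb{R}^N$ is not a consequence of bracket generation and symmetry alone but of the additional fact that the control-affine family $\mathcal{F}_V$ used in this paper is a cone, which permits time reparametrization. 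Two cosmetic remarks: the closure of the tangent vector fields of a submanifold under bracket follows from the compatibility of the bracket with restriction to an invariant submanifold rather than from the Jacobi identity per se; and the theorem as printed conflates $\mathcal{F}$ with $\mathcal{F}_V$, a looseness of the source statement that your final paragraph in effect repairs.
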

\begin{defi}[Extension of control system]
A family $\mathcal{F}'$ of real analytic vector fields in $\RN$ is said to be an
extension of $\mathcal{F}$ if $\mathcal{F}\subset \mathcal{F}'$ and for all 
$u_0\in \RN,$ $\overline{\mathcal{A}^T_{\mathcal{F}}(u_0)}=\overline{\mathcal{A}^T_{\mathcal{F}'}(u_0)},$ where the over line denotes the closure of the set in $\RN.$ 
\end{defi}
\begin{defi}[Fixed time extension of control system]
A family $\mathcal{F}'$ of real analytic vector fields in $\RN$ is said to be a fixed time extension of $\mathcal{F}$ if $\mathcal{F}\subset \mathcal{F}'$ and for all $T>0$ and for all $u_0\in \RN,$ $\overline{\mathcal{A}_{\mathcal{F}}^T(u_0)}=\overline{\mathcal{A}_{\mathcal{F}'}^T(u_0)},$ where the over line denotes the closure of the set in $\RN.$ 
\end{defi}
\begin{lem}[Agrachev-Sarychev \cite{Agra_Sary.3}]
Let $\mathcal{F}'$ be an extension of $\mathcal{F}.$ If  $\mathcal{F}'$ is globally controllable in time $T$ from $u_0\in\RN,$ then $\overline{\mathcal{A}^T_{\mathcal{F}}(u_0)}=\RN.$
\end{lem}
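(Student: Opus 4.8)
The plan is to obtain the lemma by simply unwinding the two definitions involved; at this level no analytic estimate is required. First I would translate the hypothesis: saying that $\mathcal{F}'$ is globally controllable in time $T$ from $u_0$ means, by the definition of the reachable set and of controllability, that $\mathcal{A}^T_{\mathcal{F}'}(u_0)=\RN$. Passing to closures in $\RN$ gives $\overline{\mathcal{A}^T_{\mathcal{F}'}(u_0)}=\overline{\RN}=\RN$.

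Next I would invoke the defining property of an extension. Since $\mathcal{F}'$ is an extension of $\mathcal{F}$, by definition $\overline{\mathcal{A}^T_{\mathcal{F}}(w)}=\overline{\mathcal{A}^T_{\mathcal{F}'}(w)}$ for every $w\in\RN$; applying this with $w=u_0$ and combining with the previous display yields $\overline{\mathcal{A}^T_{\mathcal{F}}(u_0)}=\overline{\mathcal{A}^T_{\mathcal{F}'}(u_0)}=\RN$, which is the claim.

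Thus, strictly speaking, the statement is a tautological consequence of the definitions of ``controllable in time $T$'' and of ``extension'', and there is no genuine obstacle in this particular step. The real difficulty of the Agrachev--Sarychev scheme sits upstream and is hidden inside the hypotheses: (i) one must actually construct a useful extension $\mathcal{F}'$ of $\mathcal{F}$ --- typically by adjoining to $\mathcal{F}$ vector fields manufactured from the controlled drift via relaxation/convexification of the admissible velocities, together with a density argument showing that the enlarged dynamics leave the closure of the reachable set unchanged --- and (ii) one must verify that this $\mathcal{F}'$ is globally controllable, which in the finite-dimensional Galerkin situation is precisely the Lie bracket generating (full Lie rank) verification (cf.\ Lemma \ref{lie_brac}) followed by the Chow--Rashevskii theorem (Theorem \ref{ChowR}). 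Once those two ingredients are in place, the present lemma is exactly the bookkeeping statement that converts exact controllability of the extension $\mathcal{F}'$ into approximate (dense) controllability of the original family $\mathcal{F}$.
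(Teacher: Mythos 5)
Your argument is correct: with the paper's definitions of ``controllable in time $T$'' (namely $\mathcal{A}(T,u_0)=\RN$) and of ``extension'' (namely $\overline{\mathcal{A}^T_{\mathcal{F}}(u_0)}=\overline{\mathcal{A}^T_{\mathcal{F}'}(u_0)}$ for all $u_0$), the lemma is exactly the two-line definitional unwinding you give, and the paper itself offers no proof, simply citing Agrachev--Sarychev. Your closing remarks correctly locate where the actual work lives (constructing a controllable extension and verifying the Lie bracket generating property), so nothing is missing.
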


\begin{defi}(Control affine system)\cite[Section 13.2.3]{LaValle}
Let $N,r\in\Na,\, r\leq N.$ Let $u:[0,T]\to \RN$ be the state of the system, $v_j:[0,T]\to \Rr$ for $j=1,2,\dots,r$ be controls and $f,g_j$ for $j=1,2,\dots,r$ be
smooth vector fields in $\RN$. A non-linear control system of the form
\begin{align}\label{control.affine.d}
\dot{u}(t)&=f(u(t))+\sum_{j=1}^r g_j(u(t))v_j(t),\,\,t\in (0,T]\quad \hbox{with} \quad
 u(0)= u_0,
\end{align}
is called a control-affine system or affine-in-control system.
\end{defi}
Affine means `linear', and non-affine means `nonlinear'. Therefore, a nonlinear system in which the control appears linearly is called a control-affine nonlinear system (or simply control-affine system, where the nonlinearity with respect to the state is already there in the system), whereas a system that has nonlinearities both in the state and in the controls is called a control non affine nonlinear system (or simply control-non affine system).
The general form of an control-affine system is represented  as :
$$
\dot{u}(t) = f(u(t)) + g(u(t))v(t),\,\,t\in (0,T]
$$
whereas the general form of a system which is non affine in the controls is given as:
$$
\dot{u}(t) = f(u(t)) + g(u(t),v(t)),\,\,t\in (0,T].
$$
\subsection{Bracket generating, approximate controllability and global controllability}
Let us come back to the control affine system \eqref{control.affine.d} again.
\begin{defi} [Full Lie rank property] \label{liegen}
The control affine system \eqref{control.affine.d} is said to have the full Lie rank property (or Lie bracket generating) if for every $u_0\in\RN$ the iterated Lie brackets of the vector fields $f,g_j$ for $j=1,2,\dots, r$ evaluated at $u_0\in\RN$ span the whole space $\RN$.
\end{defi}
\begin{defi}[Normally Accessible,  Jurdjevic \cite{Jurd}, Chapter 3 ]
A point $u_0\in\RN$ is normally accessible from another point $u_1\in\RN$ by a family of vector fields $\mathcal{F}$ of $\RN$ if there exist elements $f_i\in\mathcal{F}$ for $i=1,\dots,p$ such that the function $F:\Rr^p\to \RN$ defined by
$$
F(t):=\exp (t_1f_1)\circ \exp (t_2f_2)\circ \cdots\circ  \exp (t_pf_p)(u_1),\quad t=(t_1,t_2,\dots,t_p)\in \Rr^p,
$$ 
satisfies the following:
\begin{enumerate}[i.]
\item there exists $\hat{t}\in \Rr^p$ such that $F(\hat{t})=u_0$,
\item rank of $DF(\hat{t})=N.$
\end{enumerate}
We shall say that $u_0$ is normally accessible from  $u_1$ in time 
$\sum_{i=1}^p\hat{t}_i.$
\end{defi}
If  $u_0\in\RN$ is normally accessible from $u_1\in\RN,$ then by Constant Rank Theorem, there exists a neighbourhood $U$ of $\hat{t}$ in $\Rr^p$ such that $F(U)$ is a neighbourhood $u_0$ in $\RN.$ Therefore, 
any point which is normally accessible from $u_0$ at time $T$ must be an interior of the attainable set $\mathcal{A}_{\mathcal{F}}(u_0).$

 Let us assume that a Galerkin approximated system has approximate controllability property. Now, to show that the Galerkin approximated system has exact controllability property, we use the following result of Agrachev-Sarychev \cite[Theorem 5.5]{Agra_Sary.3}, Jurdjevic \cite[Chapter 3, Theorem 1]{Jurd}.
\begin{thm}[Agrachev-Sarychev \cite{Agra_Sary.3}, Jurdjevic \cite{Jurd}]
Let the control system \eqref{control.affine.d} be
real-analytic. Then, it is accessible if and only if it is bracket generating.
Moreover, if the control system \eqref{control.affine.d} is $C^\infty$-smooth and is bracket generating, then, it is accessible. 
\end{thm}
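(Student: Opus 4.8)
The plan is to prove this classical criterion as two essentially independent implications, since it packages a fact that holds for merely $C^\infty$ systems (bracket generating $\Rightarrow$ accessible) together with its converse, which genuinely needs real-analyticity. Throughout I would read off from \eqref{control.affine.d} the family of vector fields $\mathcal{F}_V=\{f+\sum_{j=1}^{r}g_j v_j:\ v\in V\}$, observe that the Lie algebra it generates coincides with the one generated by $f,g_1,\dots,g_r$ (so that ``bracket generating'' in the sense of Definition \ref{liegen} is exactly the statement $\mathrm{Lie}_q(\mathcal{F}_V)=\RN$), and then work with the orbit $\mathcal{O}_{u_0}$ of $\mathcal{F}_V$ through a fixed point $u_0$.

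First I would treat the implication that survives without analyticity: bracket generating $\Rightarrow$ accessible. The key input is Sussmann's orbit theorem, which gives that $\mathcal{O}_{u_0}$ is an immersed connected submanifold of $\RN$ with $\mathrm{Lie}_q(\mathcal{F}_V)\subseteq T_q\mathcal{O}_{u_0}$ at every $q\in\mathcal{O}_{u_0}$; when the system is bracket generating this forces $T_q\mathcal{O}_{u_0}=\RN$, so the orbit is an open subset of $\RN$. It then remains to pass from ``the orbit is full-dimensional'' to ``the positive-time attainable set $\mathcal{A}_{\mathcal{F}_V}(u_0)$ has nonempty interior'', which is Krener's theorem: $\mathcal{A}_{\mathcal{F}_V}(u_0)\subseteq\overline{\mathrm{int}_{\mathcal{O}_{u_0}}\mathcal{A}_{\mathcal{F}_V}(u_0)}$, and once the orbit is open the relative interior is a genuine interior, so $u_0\in\mathcal{A}_{\mathcal{F}_V}(u_0)$ forces $\mathrm{int}\,\mathcal{A}_{\mathcal{F}_V}(u_0)\neq\emptyset$. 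Concretely this is the normal-accessibility construction already sketched in the appendix: choose $X_1\in\mathcal{F}_V$ with $X_1(u_0)\neq 0$ and inductively $X_{k+1}\in\mathcal{F}_V$ not tangent to the $k$-parameter piece of trajectory built so far — possible at each step precisely because bracket generation prevents the attainable set from being trapped inside a proper submanifold — so that after $N$ steps the map $(t_1,\dots,t_N)\mapsto \exp(t_N X_N)\circ\cdots\circ\exp(t_1 X_1)(u_0)$ has rank $N$ at some point with all $t_i>0$, and its image contains an open set. Since $u_0$ was arbitrary this gives accessibility, and since nothing here used analyticity it also proves the ``moreover'' clause.

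Next I would prove the converse in the analytic category: accessible $\Rightarrow$ bracket generating. This is where analyticity is indispensable, via Nagano's theorem, which in the real-analytic case upgrades the inclusion from the orbit theorem to the equality $T_q\mathcal{O}_{u_0}=\mathrm{Lie}_q(\mathcal{F}_V)$ for every $q$ on the orbit. If the system is accessible from $u_0$, then $\mathcal{A}_{\mathcal{F}_V}(u_0)$, and hence the orbit, has nonempty interior, so $\mathcal{O}_{u_0}$ is open and $T_q\mathcal{O}_{u_0}=\RN$; Nagano then gives $\mathrm{Lie}_q(\mathcal{F}_V)=\RN$ at every $q\in\mathcal{O}_{u_0}$, in particular at $u_0$. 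To upgrade ``on the orbit'' to ``on all of $\RN$'' I would use that for an analytic family the bad set $\Sigma=\{q:\dim\mathrm{Lie}_q(\mathcal{F}_V)<N\}$ is the common zero set of finitely many analytic functions (minors of a matrix whose columns are enough iterated brackets), hence either all of $\RN$ or a nowhere dense analytic subvariety; accessibility from every point excludes the latter alternative from containing any point, so $\Sigma=\emptyset$.

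The hard part is not any estimate but assembling the differential-geometric machinery with the correct regularity hypotheses — Sussmann's orbit theorem and Krener's theorem in the $C^\infty$ category, Nagano's theorem in the analytic one — and in particular the step that turns ``$\mathrm{Lie}_q=\RN$ on an open set'' into ``$\mathrm{Lie}_q=\RN$ everywhere''. That last step is exactly where real-analyticity cannot be dispensed with, which is also why only one implication is kept for smooth systems: a vector field supported in a ball produces a $C^\infty$ control system that is accessible yet whose Lie algebra degenerates outside that ball.
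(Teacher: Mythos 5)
The paper does not actually prove this statement: it is quoted as a black box from Agrachev--Sarychev and Jurdjevic, so there is no in-paper argument to compare yours against. Your proof is the standard one from exactly those references and is correct: the Sussmann orbit theorem plus Krener's theorem (via the normal-accessibility construction, which the appendix indeed sketches) give ``bracket generating $\Rightarrow$ accessible'' in the $C^\infty$ category, which settles the ``moreover'' clause and one direction of the equivalence; Nagano's theorem supplies the converse in the analytic category by upgrading the orbit-theorem inclusion $\mathrm{Lie}_q(\mathcal{F}_V)\subseteq T_q\mathcal{O}_{u_0}$ to an equality, and you correctly isolate real-analyticity as the hypothesis that cannot be dropped there (with the right counterexample, a compactly supported smooth field). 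Two minor remarks. First, your identification of $\mathrm{Lie}(\mathcal{F}_V)$ with the Lie algebra generated by $f,g_1,\dots,g_r$ tacitly uses that the control set affinely spans $\mathbb{R}^r$; this holds in the paper's setting since the $v_j$ are unconstrained, but it is worth saying. Second, the closing argument about the analytic ``bad set'' $\Sigma$ is redundant: since accessibility is assumed from every point, the Nagano step already yields $\mathrm{Lie}_{u_0}(\mathcal{F}_V)=\mathbb{R}^N$ at each $u_0$ directly, with no need to analyze the zero set of minors. Neither point is a gap.
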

 For a bracket generating system, the approximate controllability yields the exact controllability. We have 
 \begin{prop}[Jurdevic \cite{Jurd}] \label{cont.1}
 Let the system \eqref{control.affine.d} be bracket generating and 
$\overline{\mathcal{A}_{\mathcal{F}}^T(u_0)}=\RN$. Then $\mathcal{A}_{\mathcal{F}}^T(u_0)=\RN.$
\end{prop}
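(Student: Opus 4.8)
The plan is to prove the ``bracket generating $+$ approximate controllability $\Rightarrow$ exact controllability'' principle in three steps, following the circle of ideas in Jurdjevic \cite[Chapter 3]{Jurd}. First I would invoke the Accessibility Theorem recalled just above: in the situation of interest the vector fields in \eqref{control.affine.d} are real-analytic and $\mathcal{F}$ is bracket generating, hence the system is accessible. By the Constant Rank Theorem argument given above, the set $\mathcal{N}_T$ of points normally accessible from $u_0$ in time $T$ is a nonempty open subset of $\mathcal{A}_{\mathcal{F}}^T(u_0)$, and $\mathcal{N}_T\subset\operatorname{int}\mathcal{A}_{\mathcal{F}}^T(u_0)$; in particular $\operatorname{int}\mathcal{A}_{\mathcal{F}}^T(u_0)\neq\emptyset$.

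Second, I would upgrade this to $\mathcal{A}_{\mathcal{F}}^T(u_0)\subset\overline{\mathcal{N}_T}\subset\overline{\operatorname{int}\mathcal{A}_{\mathcal{F}}^T(u_0)}$. Given $u_1=u(T;u_0,v^{*})$, split the horizon as $[0,T-\eta]\cup[T-\eta,T]$ for small $\eta>0$; the map sending an initial state to the endpoint of the trajectory produced by the fixed control $v^{*}|_{[T-\eta,T]}$ is a diffeomorphism near $u(T-\eta;u_0,v^{*})$, so it is enough to approximate $u(T-\eta;u_0,v^{*})$ by points normally accessible from $u_0$ in time $T-\eta$. This is the standard needle-variation construction: one perturbs $v^{*}$ on $[0,T-\eta]$ along directions coming from iterated Lie brackets of $f,g_1,\dots,g_r$, and the full-rank requirement in the definition of normal accessibility is exactly bracket generation. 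Combining with the first step, $\overline{\operatorname{int}\mathcal{A}_{\mathcal{F}}^T(u_0)}=\overline{\mathcal{A}_{\mathcal{F}}^T(u_0)}$.

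Third, from the hypothesis $\overline{\mathcal{A}_{\mathcal{F}}^T(u_0)}=\RN$ this gives that $\operatorname{int}\mathcal{A}_{\mathcal{F}}^T(u_0)$ is dense in $\RN$, and I then want to close the remaining gap between density and attainment. Fix $y\in\RN$ and $T_1\in(0,T)$. The time-reversed system $\dot w=-f(w)-\sum_j\tilde v_j g_j(w)$ is again real-analytic and has the same Lie algebra as \eqref{control.affine.d} (a Lie algebra containing $-f$ contains $f$), hence is bracket generating; by accessibility applied to it, the set $\mathcal{B}:=\{z:\ y\in\mathcal{A}_{\mathcal{F}}^{T-T_1}(z)\}$, i.e.\ the time-$(T-T_1)$ reachable set of the reversed system from $y$, has nonempty interior. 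Since $\operatorname{int}\mathcal{A}_{\mathcal{F}}^{T_1}(u_0)$ is also dense in $\RN$ (by the same argument applied at time $T_1$, which is the point where real-analyticity is genuinely needed), it meets the nonempty open set $\operatorname{int}\mathcal{B}$ at some $z$; concatenating a trajectory steering $u_0$ to $z$ in time $T_1$ with one steering $z$ to $y$ in time $T-T_1$ gives $y\in\mathcal{A}_{\mathcal{F}}^T(u_0)$. As $y$ is arbitrary, $\mathcal{A}_{\mathcal{F}}^T(u_0)=\RN$.

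I expect the third step to be the real obstacle. Reachable sets of control-affine systems need not be closed (``control energy can run off to infinity''), so passing from the closure of the time-$T$ reachable set being everything to the set itself being everything is not formal, and making the concatenation above legitimate requires knowing that density of the interior persists at intermediate times $T_1\in(0,T)$ — a structural fact about analytic bracket-generating systems that I would cite from Jurdjevic \cite[Chapter 3]{Jurd} rather than reprove. By contrast, the needle-variation bookkeeping in the second step, although technical, is routine.
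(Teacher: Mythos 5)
The paper does not actually prove Proposition \ref{cont.1}: it is quoted from Jurdjevic \cite{Jurd} as a black box (the only preparation in the appendix is the remark that normally accessible points are interior points of the attainable set), so there is no in-paper argument to compare yours against. Your three-step architecture --- nonempty interior via normal accessibility, $\mathcal{A}_{\mathcal{F}}^T(u_0)\subset\overline{\operatorname{int}\mathcal{A}_{\mathcal{F}}^T(u_0)}$, then time-reversal plus concatenation --- is indeed the standard route in Jurdjevic's Chapter 3 and in Agrachev--Sarychev, and your observation that the time-reversed family generates the same Lie algebra is correct.

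There are, however, two genuine gaps. First, the step you describe as ``by the same argument applied at time $T_1$'' is not available: the hypothesis is $\overline{\mathcal{A}_{\mathcal{F}}^T(u_0)}=\RN$ for the one fixed time $T$, and density of the time-$T$ reachable set does not propagate backwards to $T_1<T$ (nor does your step 2 manufacture it, since step 2 only relates $\mathcal{A}_{\mathcal{F}}^{T_1}(u_0)$ to the closure of its own interior, not to $\RN$). You flag this yourself, but the fix is not a restatement of the same argument; it is precisely the content one must import from Jurdjevic, so as written the third step is circular at its crucial point. Second, throughout you use that bracket generation gives nonempty interior of \emph{exact-time} reachable sets (for the original system in step 1 and for the reversed system's set $\mathcal{B}$ in step 3). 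The Lie algebra rank condition only yields accessibility for the time-$\le T$ reachable set; for the fixed-time set one needs strong accessibility, i.e.\ full rank of the ideal generated by $g_1,\dots,g_r$ inside $\operatorname{Lie}(f,g_1,\dots,g_r)$. The system $\dot x=1$, $\dot y=v$ in $\mathbb{R}^2$ is bracket generating, yet $\mathcal{A}^T(0)$ is the line $\{x=T\}$ with empty interior. In the paper's actual application this is harmless because the control vector fields alone are bracket generating and the effective family is symmetric (this is what Chow--Rashevskii is applied to), but a proof of the Proposition as stated must either add the strong accessibility hypothesis or explain why it follows in the control-affine setting \eqref{control.affine.d}; your sketch does neither.
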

\section{Stochastic preliminaries}
Since to keep our article self-contained, we present here some tools from stochastic theory which will be used to prove Theorem 1.5 and Theorem 1.6.
\subsection{Girsanov Transformation}\label{gir.1}
In this section we will briefly discuss Girsanov Transformation see Girsanov\cite{Gir}.
See also Kuo \cite[Chapter 8, Page-141]{Kuo}, Da Prato-Zabczyk\cite[Chapter 10, Page-291]{DapratoZ}.

Let us consider a probability space $(\Omega, \mathcal{F}, \mathbb{P})$ together with a normal filtration $\{\mathcal{F}_t\}_{0\leq t\leq T}$. Let  $H$ and $U$ be two Hilbert spaces and $Q$ be a self-adjoint bounded non-negative operator on U. Let $W(t),\, 0\leq t\leq T$ be a $Q$-Wiener process  in another Hilbert space $U_1\supset U$ and $U_0=Q^\frac{1}{2}(U)$
be the Hilbert space with the inner product 
$$
\langle u, v\rangle_0 := \langle Q^{-\frac{1}{2}}u, Q^{-\frac{1}{2}} v\rangle,\quad u,v \in Q^\frac{1}{2}(U),
$$ and norm $|u|_0=(\langle u, u\rangle_0)^\frac{1}{2}$, where $\langle \cdot, \cdot\rangle$ is the inner product in $U$.
We are interested in the following question. 
\\
{\it Question}: Are there other stochastic processes $\varphi(t)$ for $0\leq t\leq T,$ such that 
$W(t)-\varphi(t)$ for $0\leq t\leq T,$ is a Brownian motion with respect to some probability measure ?

We will show the following: 
\newline
 If $$\varphi(t) = \int_0^t \psi(s) ds,\,\, 0 \leq t \leq T,\quad \hbox{with} \quad \psi \in \mathcal{L}_{ad}(\Omega,L^2(0, T;U_0))$$
satisfies the condition in Theorem 8.7.3, then $\widehat{W}(t) = W(t) - \varphi(t)$ is indeed a Wiener process with respect to the probability measure $d\widehat{\mathbb{P}} = \mathcal{E}_\psi(T)d\mathbb{P}$. Here, $\mathcal{E}_\psi(t)$
 is the exponential process given by $\psi$ as defined by 
 $$
\mathcal{E}_\psi(t):=e^{\int_0^t \left\langle \psi(s),dW(s) \right\rangle_0 - \frac{1}{2}\int_0^t|\psi(s)|_0^2\,ds},\quad t\in [0,T].
$$
The main idea in the proof of this important theorem, due to Girsanov \cite{Gir}, is the transformation of probability measures described in the previous section and L\'{e}vy’s characterization theorem of Brownian motion.

The following result is proved in \cite{Bens} and Kozlov \cite{Koz}. We also refer Da Prato-Zabczyk \cite[Page-291]{DapratoZ}.
\begin{thm}\label{Girt}
Let $\psi(t)$ be a $U_0$-valued $\mathcal{F}_t$-predicatble process such that 
\begin{align}\label{NSexp}
\mathbb{E} \left(\mathcal{E}_\psi(T)\right)=1.
\end{align}
Then, the process 
\begin{align}\label{NW}
\widehat{W}(t)=W(t)- \frac{1}{2}\int_0^t\psi(s)\,ds,\quad {t\in [0,T]}
\end{align}
is a $Q$-Wiener process with respect to $\{\mathcal{F}_t\}_{0\leq t\leq T}$
on the probability space $(\Omega, \mathcal {F}, \widehat{\mathbb{P}})$
where
\begin{align}\label{NP}
d\,\widehat{\mathbb{P}}=\mathcal{E}_\psi(T)\, d\,\mathbb{P}.
\end{align}
\end{thm}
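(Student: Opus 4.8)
The plan is to adapt the classical proof of Girsanov's theorem to the Hilbert-space setting, in three steps: first showing that $\mathcal{E}_\psi$ is a genuine $\mathbb{P}$-martingale and that $\widehat{\mathbb{P}}$ is a probability measure, then reducing the statement to L\'evy's characterization theorem, and finally carrying out the exponential-martingale computation that identifies $\widehat{W}$ as a $Q$-Wiener process under $\widehat{\mathbb{P}}$. For the first step, I would note that It\^o's formula applied to $x\mapsto e^{x}$ shows that $\mathcal{E}_\psi$ solves
\begin{align}\label{plan-expsde}
d\mathcal{E}_\psi(t) = \mathcal{E}_\psi(t)\,\langle \psi(t), dW(t)\rangle_0, \qquad \mathcal{E}_\psi(0) = 1,
\end{align}
so it is a continuous nonnegative local $\mathbb{P}$-martingale, hence, by Fatou's lemma, a supermartingale; a nonnegative supermartingale on $[0,T]$ whose terminal expectation equals its initial value is a uniformly integrable martingale, so the hypothesis \eqref{NSexp} upgrades $\mathcal{E}_\psi$ to a true $\mathbb{P}$-martingale on $[0,T]$. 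Then $\widehat{\mathbb{P}}$ defined by \eqref{NP} is a probability measure on $(\Omega,\mathcal{F}_T)$, equivalent to $\mathbb{P}$ because $\mathcal{E}_\psi(T)>0$ $\mathbb{P}$-a.s., and one records the Bayes rule $\mathbb{E}^{\widehat{\mathbb{P}}}[X\mid\mathcal{F}_s] = \mathcal{E}_\psi(s)^{-1}\,\mathbb{E}^{\mathbb{P}}[X\,\mathcal{E}_\psi(t)\mid\mathcal{F}_s]$ for $0\le s\le t\le T$ and bounded $\mathcal{F}_t$-measurable $X$; equivalently, a process $Z$ is a $\widehat{\mathbb{P}}$-(local) martingale if and only if $Z\,\mathcal{E}_\psi$ is a $\mathbb{P}$-(local) martingale.

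Since $\widehat{\mathbb{P}}\sim\mathbb{P}$, the process $\widehat{W}(t)=W(t)-\int_0^t\psi(s)\,ds$ still has $\widehat{\mathbb{P}}$-a.s. continuous paths and $\widehat{W}(0)=0$, so by the infinite-dimensional L\'evy characterization theorem (Da Prato--Zabczyk) it suffices to prove that, under $\widehat{\mathbb{P}}$, for every $a\in U$ the real process $t\mapsto\langle a,\widehat{W}(t)\rangle_U$ is a continuous local martingale and, for $a,b\in U$, the covariation satisfies $[\langle a,\widehat{W}\rangle_U,\langle b,\widehat{W}\rangle_U]_t = t\,\langle Qa,b\rangle_U$. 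The core step is the following computation: fixing $a\in U$ and putting $F^{a}_t := \exp\!\big(i\langle a,\widehat{W}(t)\rangle_U + \tfrac12 t\,\langle Qa,a\rangle_U\big)$, one applies the It\^o product rule to $F^{a}_t\,\mathcal{E}_\psi(t)$ using \eqref{plan-expsde} and checks that its finite-variation part vanishes. The Karhunen--Lo\`eve expansion of $W$ (equivalently, the Riesz representative $Qa\in U_0$ of the functional $\langle a,\cdot\rangle_U$ restricted to $U_0=Q^{1/2}(U)$) yields the cross variation $d\big[\langle a,W\rangle_U,\ \int_0^{\cdot}\langle\psi(s),dW(s)\rangle_0\big]_t = \langle a,\psi(t)\rangle_U\,dt$, which exactly cancels the drift contributed by the $-i\int_0^t\langle a,\psi(s)\rangle_U\,ds$ term of $\widehat{W}$, while the compensator $+\tfrac12 t\langle Qa,a\rangle_U$ absorbs the second-order It\^o term of $e^{i\langle a,W\rangle_U}$. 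Hence $F^{a}\mathcal{E}_\psi$ is a $\mathbb{P}$-local martingale, so $F^{a}$ is a $\widehat{\mathbb{P}}$-local martingale; and since $|F^{a}_t|=e^{\frac12 t\langle Qa,a\rangle_U}$ is bounded on $[0,T]$, a routine localization makes it a true $\widehat{\mathbb{P}}$-martingale. Running the same computation with $\alpha a+\beta b$ in place of $a$ (or polarizing) produces the joint structure.

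From the martingale property of $\{F^{a}\}_{a\in U}$ the classical argument---taking conditional expectations and differentiating in $a$ at the origin---gives that, under $\widehat{\mathbb{P}}$, the increment $\widehat{W}(t)-\widehat{W}(s)$ is independent of $\mathcal{F}_s$ and Gaussian with covariance operator $(t-s)Q$; combined with the path continuity inherited from $W$, this is precisely the definition of a $Q$-Wiener process with respect to $\{\mathcal{F}_t\}_{0\le t\le T}$ on $(\Omega,\mathcal{F},\widehat{\mathbb{P}})$, which is the assertion. I expect the main obstacle to be the infinite-dimensional It\^o/product computation in the core step: one has to make the stochastic integral $\int_0^{\cdot}\langle\psi(s),dW(s)\rangle_0$ and the pairing $\langle a,\cdot\rangle_U$ precise on $U_0=Q^{1/2}(U)$, identify the cross variation correctly through the Riesz representative, and justify passing from local to true martingales across the change of measure; by contrast the promotion of $\mathcal{E}_\psi$ to a martingale is immediate here only because \eqref{NSexp} is assumed (without it one needs a Novikov-type condition). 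We also remark that, for consistency with the definition of $\mathcal{E}_\psi$, the drift in \eqref{NW} should read $\int_0^t\psi(s)\,ds$.
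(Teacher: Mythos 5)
Your proposal is correct, but it follows a different route than the paper. The paper's proof does not pass through L\'evy's characterization or the complex exponential product $F^a\mathcal{E}_\psi$: instead it first assumes $\psi$ bounded, fixes a bounded deterministic $g:[0,T]\to U_0$, and computes the real exponential moment $\widehat{\mathbb{E}}\bigl(e^{\lambda\int_0^T\langle g(t),d\widehat{W}(t)\rangle_0}\bigr)$ directly from the explicit density $\mathcal{E}_\psi(T)$, using the unit expectation of exponential martingales with bounded integrands (reduced to a one-dimensional Brownian motion via Lemma \ref{r1}); it then extends this identity analytically in the exponent to obtain the characteristic function, repeats the computation against indicators of $A\in\mathcal{F}_t$ to get Gaussianity of $\int_t^T\langle g,d\widehat W\rangle_0$ together with independence from $\mathcal{F}_t$, and finally removes the boundedness assumption by approximating a general $\psi$ with bounded $\psi_N$ and passing to the limit. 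Your argument buys two simplifications: the supermartingale-plus-terminal-expectation step extracts the true martingale property of $\mathcal{E}_\psi$ directly from \eqref{NSexp}, and the It\^o product computation for $F^a\mathcal{E}_\psi$ (with the cross-variation $\langle a,\psi(t)\rangle_U\,dt$ cancelling the drift of $\widehat{W}$) handles a general square-integrable $\psi$ at once, so no truncation/limit step and no analytic continuation are needed; the price is the infinite-dimensional It\^o/product bookkeeping on $U_0=Q^{1/2}(U)$, which you correctly identify as the delicate point. The paper's route, by contrast, stays at the level of explicit Laplace/Fourier transforms and only needs the elementary scalar fact that a bounded-integrand exponential martingale has expectation one, at the cost of the two-stage (bounded, then general) structure. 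Your closing remark is also on target: the factor $\tfrac12$ in front of $\int_0^t\psi(s)\,ds$ in \eqref{NW} is inconsistent with the definition of $\mathcal{E}_\psi$ and with the way the transformation is used in the body of the paper (where the shifted Brownian motions are $\beta_{k}^l(t)+\int_0^t v_k^l(s)\,ds$), so the drift should indeed be $\int_0^t\psi(s)\,ds$.
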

\begin{proof}
Let us first assume that $\psi$ is bounded i.e., there exists $K>0$ such that  $|\psi(t)|_0 \leq K$ for all $t\in [0,T].$ Let $g:[0,T]\to U_0$ be bounded Borel measurable. Then, we note from the definition of $\widehat{W}$ that 
$$
\int_0^T \langle g(t), d\,\widehat {W}(t)\rangle_0 = \int_0^T \langle g(t), d\,W(t)\rangle_0 -\int_0^T \langle g(t), \psi(t)\rangle_0 d\,t.
$$
Let us denote the Expectation with respect to $\widehat{\mathbb{P}}$ by
$\widehat{\mathbb{E}}.$ Then, we compute
\begin{align}\label{exp.1}
\widehat{\mathbb{E}}\left(e^{\int_0^T \langle g(t), d\,\widehat {W}(t)\rangle_0}\right)& = \int_\Omega e^{\int_0^T \langle g(t), d\,\widehat {W}(t)\rangle_0} \,d\, \widehat{\mathbb{P}}
                 = \int_\Omega e^{\int_0^T \langle g(t), d\,W(t)\rangle_0 -\int_0^T \langle g(t), \psi(t)\rangle_0 d\,t} \mathcal{E}_\psi(T) \,d\, \mathbb{P} \notag \\
                & = \int_\Omega e^{\int_0^T \langle g(t), d\,W(t)\rangle_0 -\int_0^T \langle g(t), \psi(t)\rangle_0 d\,t + \int_0^T \left\langle \psi(s),dW(s) \right\rangle_0 - \frac{1}{2}\int_0^T|\psi(s)|_0^2\,ds }  \,d\, \mathbb{P},\\
                & = e^{\frac{1}{2}\int_0^T|\psi(s)|_0^2\,ds}\mathbb{E}\left( e^{\int_0^T \langle g(t)+\psi(t), d\,W(t)\rangle_0  - \frac{1}{2}\int_0^T|g(t)+\psi(t)|_0^2\,dt }   \right),\notag\\
                & = e^{\frac{1}{2}\int_0^T|\psi(t)|_0^2\,dt},
\end{align}
where we use that fact that $\mathbb{E}\left( e^{\int_0^T \langle g(t)+\psi(t), d\,W(t)\rangle_0  - \frac{1}{2}\int_0^T|g(t)+\psi(t)|_0^2\,dt }   \right)=1.$  This follows from Lemma \ref{r1} below, and
the fact that the process $\gamma(t)=|g(t)+\psi(t)|_0$ is bounded by a constant, and from the observation that for a bounded process $\gamma(t)$ and a real valued Wiener process $\beta$
$$
\mathbb{E}\left( e^{\int_0^T \gamma(t)d\,\beta (t)  - \frac{1}{2}\int_0^T\gamma(t)^2\,dt }   \right)=1.
$$
We can compute that 
$$
\widehat{\mathbb{E}}\left(e^{\lambda\int_0^T \langle g(t), d\,\widehat {W}(t)\rangle_0}\right) = e^{\frac{\lambda^2}{2}\int_0^T|\psi(t)|_0^2\,dt},
$$
for all $\lambda\in \mathbb{R}.$ Hence, considering 
$$
h(z)= \widehat{\mathbb{E}}\left(e^{z\int_0^T \langle g(t), d\,\widehat {W}(t)\rangle_0}\right),\quad z\in\mathbb{C}
$$
we note that $h$ is finite for all real numbers $z.$ It then follows that $h$ is well defined for all complex numbers and is continuously differentiable with respect to the complex variable $z.$ Therefore $h$ is analytic on $\mathbb{C}$ and
$$
h(z)=  e^{\frac{z^2}{2}\int_0^T|\psi(t)|_0^2\,dt},\quad z\in \mathbb{C}.
$$
Therefore,
$$
\widehat{\mathbb{E}}\left(e^{i\lambda\int_0^T \langle g(t), d\,\widehat {W}(t)\rangle_0}\mathbbm{1}_A\right) = e^{-\frac{\lambda^2}{2}\int_0^T|\psi(t)|_0^2\,dt}\widehat{\mathbb{P}}(A),
$$
for all $\lambda\in \mathbb{R}$ and $A\in \mathcal{F}_0.$ Therefore, Therefore random variables
$\int_0^T \langle g(t), d\,\widehat {W}(t)\rangle_0 $ are Gaussian with covariances $\int_0^T|\psi(t)|_0^2\,dt.$ By the same one can show that
\begin{align}\label{GaIn}
\widehat{\mathbb{E}}\left(e^{i\lambda\int_t^T \langle g(t), d\,\widehat {W}(t)\rangle_0}\mathbbm{1}_A\right) = e^{-\frac{\lambda^2}{2}\int_t^T|\psi(t)|_0^2\,dt}\widehat{\mathbb{P}}(A),
\end{align}
for all $\lambda\in \mathbb{R}$ and $A\in \mathcal{F}_t.$ 
Consequently, the random variables $\int_0^T \langle g(t), d\,\widehat {W}(t)\rangle_0 $ are independent of $\mathcal{F}_t.$ This way the proof of the theorem is complete under the condition that $\psi$ is a
bounded process. For a general process $\psi$ satisfying \eqref{NSexp} consider a sequence $\psi_N$ of bounded processes such that
$$
\lim_{N\to\infty}\int_0^T|\psi(t)-\psi_N(t)|_0^2\,d\,t=0,\quad \mathbb{P} \mbox{ a.s.}
$$
and define processes
$$
\widehat{W}_N(t)=W(t)- \frac{1}{2}\int_0^t\psi_N(s)\,ds,\quad t\in [0,T],\,\,N\in\Na.
$$
Using \eqref{NSexp} and \eqref{GaIn}, then passing to the limit $N\to\infty$
we note that \eqref{GaIn} holds for general case.
\end{proof}

\begin{lem}\label{r1}
Let $\psi(t)$ be a $U_0$-valued $\mathcal{F}_t$-predictable process such that 
\begin{align}\label{NSp}
\mathbb{P} \left(\int_0^T|\psi(t)|_0^2\,dt < \infty \right)=1.
\end{align}
Then, there exists a real valued Wiener process $\beta(t),\, t\in[0, T ],$ with respect to $\{\mathcal{F}_t\}_{0\leq t\leq T}$, normalized and such that $\mathbb{P}$-a.s.
$$
\int_0^t |\psi(s)|_0,d\beta(s)  = \int_0^t \left\langle \psi(s),dW(s) \right\rangle_0,\quad t\in [0,T].
$$
\end{lem}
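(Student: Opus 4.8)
The plan is to construct the required Brownian motion $\beta$ by L\'evy's characterisation theorem, after handling the set on which $\psi$ degenerates. First I would observe that, under the sole hypothesis \eqref{NSp}, the real-valued process $M(t):=\int_0^t\langle\psi(s),dW(s)\rangle_0$, $t\in[0,T]$, is a well-defined continuous local martingale with respect to $\{\mathcal F_t\}_{0\le t\le T}$ with quadratic variation $\langle M\rangle_t=\int_0^t|\psi(s)|_0^2\,ds$; a localising sequence is $\tau_n:=\inf\{t\in[0,T]:\int_0^t|\psi(s)|_0^2\,ds\ge n\}$, which increases to $T$ precisely because of \eqref{NSp}. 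Since \eqref{NSp} only guarantees almost sure finiteness, the whole argument has to stay at the level of local martingales, integrability being recovered only along the $\tau_n$.

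Next I would ``renormalise $dM$ by $|\psi|_0$'' wherever this is meaningful. Assuming $Q\ne0$ (otherwise $W\equiv0$ and the statement is trivial), fix $e\in U_0$ with $|e|_0=1$ and let $\widetilde W(t):=\int_0^t\langle e,dW(s)\rangle_0$, a standard real-valued $\{\mathcal F_t\}$-Brownian motion. Then I would set
\[ \beta(t):=\int_0^t\frac{\mathbbm{1}_{\{|\psi(s)|_0\ne0\}}}{|\psi(s)|_0}\,dM(s)+\int_0^t\mathbbm{1}_{\{|\psi(s)|_0=0\}}\,d\widetilde W(s),\qquad t\in[0,T]. \]
The first integrand is predictable and $\int_0^T|\psi(s)|_0^{-2}\mathbbm{1}_{\{|\psi(s)|_0\ne0\}}\,d\langle M\rangle_s\le T<\infty$, so both integrals are well-defined continuous local martingales and no enlargement of the probability space is needed. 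The cross-variation of the two terms vanishes because the indicators $\mathbbm{1}_{\{|\psi(s)|_0\ne0\}}$ and $\mathbbm{1}_{\{|\psi(s)|_0=0\}}$ have disjoint support; hence, using $d\langle M\rangle_s=|\psi(s)|_0^2\,ds$, one obtains $\langle\beta\rangle_t=\int_0^t\mathbbm{1}_{\{|\psi(s)|_0\ne0\}}\,ds+\int_0^t\mathbbm{1}_{\{|\psi(s)|_0=0\}}\,ds=t$. L\'evy's characterisation theorem then shows that $\beta$ is a normalised $\{\mathcal F_t\}$-Brownian motion.

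Finally I would verify the representation. The local martingale $\int_0^\cdot\mathbbm{1}_{\{|\psi(s)|_0=0\}}\,dM(s)$ has quadratic variation with density $\mathbbm{1}_{\{|\psi(s)|_0=0\}}|\psi(s)|_0^2\equiv0$, hence vanishes identically, so that $\int_0^\cdot\mathbbm{1}_{\{|\psi(s)|_0\ne0\}}\,dM(s)=M$; substituting $d\beta$ into $\int_0^t|\psi(s)|_0\,d\beta(s)$ cancels the factor $1/|\psi(s)|_0$ in the first term and annihilates the second, leaving $M(t)=\int_0^t\langle\psi(s),dW(s)\rangle_0$, which is exactly the asserted identity.

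I expect the delicate point to be the degeneracy set $\{|\psi|_0=0\}$: there $dM$ cannot be renormalised, so one must splice in fresh noise with the correct quadratic variation and then ensure the resulting $\beta$ is still Brownian with respect to $\{\mathcal F_t\}$ (so that all stochastic integrals invoked in the proof of Theorem \ref{Girt} keep their meaning) --- the disjoint-support observation is what makes this work without enlarging the space. If $\psi$ were known never to vanish, one would simply take $\beta(t)=\int_0^t|\psi(s)|_0^{-1}\,dM(s)$ and the argument would be essentially immediate. The remaining ingredients --- well-posedness of the stochastic integrals under \eqref{NSp} alone and the quadratic-variation bookkeeping --- are routine.
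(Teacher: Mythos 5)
The paper never actually proves Lemma \ref{r1}: it is stated without argument and is a known representation result (see Da Prato--Zabczyk, and the references to Bensoussan and Kozlov), so there is no in-text proof to compare against. Your argument is the standard proof of that result and it is correct: under \eqref{NSp} alone, $M(t)=\int_0^t\langle\psi(s),dW(s)\rangle_0$ is a continuous local martingale with $\langle M\rangle_t=\int_0^t|\psi(s)|_0^2\,ds$ (localising along your $\tau_n$); the integrand $\mathbbm{1}_{\{|\psi(s)|_0\neq0\}}/|\psi(s)|_0$ is predictable with $\int_0^T\mathbbm{1}_{\{|\psi(s)|_0\neq0\}}\,ds\leq T<\infty$, so the integral against $dM$ is well defined; the disjoint supports of the two indicators annihilate the cross-variation regardless of what $\langle M,\widetilde W\rangle$ is, giving $\langle\beta\rangle_t=t$; and L\'evy's characterisation plus the quadratic-variation argument for $\int_0^\cdot\mathbbm{1}_{\{|\psi(s)|_0=0\}}\,dM(s)\equiv0$ yield the identity. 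The one point where you genuinely depart from the textbook treatment is the source of the auxiliary noise on $\{|\psi|_0=0\}$: the classical statement permits an enlargement of the probability space, whereas you splice in $\widetilde W(t)=\int_0^t\langle e,dW(s)\rangle_0$ for a fixed $e\in U_0$ with $|e|_0=1$, which is indeed a standard $\{\mathcal F_t\}$-Brownian motion and lets you conclude on the original space --- a small but real improvement, and harmless for the use made of the lemma in the proof of Theorem \ref{Girt}. The only caveat is the degenerate case $Q=0$, where $U_0=\{0\}$ contains no unit vector: both sides of the asserted identity then vanish, but producing \emph{some} Brownian motion may still require enlarging the space, so your parenthetical ``the statement is trivial'' should be read as ``holds after the customary enlargement''; this is cosmetic and does not affect the proof.
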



\section{Fourier Series and Spectral method}\label{fourierd}
In this section, we discuss the Fourier decomposition of the system \eqref{M1.N}. This analysis is essential to write the system
\eqref{M1.N} as an infinite system of ordinary differential equations. Then, we can consider Galerkin approximations extracted from that infinite system of ordinary differential equations.
Let us consider the three dimensional Euclidean space $\mathbb{R}^3$ with the standard basis $e_1,e_2,e_3$ which are given by by 
$$e_1=\begin{pmatrix}
      1\\0\\0
     \end{pmatrix},\quad e_2=\begin{pmatrix}
      0\\1\\0
     \end{pmatrix},\quad e_3=\begin{pmatrix}
      0\\0\\1
     \end{pmatrix}.
$$
We assume that the vector cross product $\times$ is defined in space $\mathbb{R}^3$. Then, by the properties of cross product in $\mathbb{R}^3$, we have that
\begin{align*}
e_1\times e_1&=e_2\times e_2=e_3\times e_3=0\in\mathbb{R}^3;\\
e_1\times e_2&=e_3=-e_2\times e_1\,,\,\, e_2\times e_3=e_1=-e_3\times e_2\,,\,\,e_3\times e_1=e_2=-e_1\times e_3.
\end{align*}
We know that $\left\{\lambda_n:=-n^2, \phi_n:=\cos(n\cdot)\right\}_{n\in \mathbb{N}_0}$ are the eigen pairs of the Neumann Laplace operator $-\mathcal{A}:=\Delta$ ($\mathcal{A}$ is defined above in \ref{op.n}) in the Hilbert space $\mathbb{L}^2(0,2\pi).$ Therefore,
Fourier expansion of solution $\M(t,x)$ can be written as
\begin{align}\label{f.expan.n}
\M(t,x)=\sum_{j=1}^3\sum_{n\in \mathbb{N}_0} m_n^j(t)\cos(nx)e_j.
\end{align}
Then,
\begin{align*}
\M_{xx}(t,x)=\sum_{j=1}^3\sum_{n\in \mathbb{N}_0}\lambda_n
 m_n^j(t)\cos(nx)e_j.
\end{align*}
We compute
\begin{align*}
&\M(t,x)\times \M_{xx}(t,x) \\
& =\left(\sum_{j=1}^3\sum_{n\in \mathbb{N}_0} m_n^j(t)\cos(nx)e_j\right)\times \left(\sum_{i=1}^3\sum_{k\in \mathbb{N}_0} m_k^j(t)\lambda_k \cos(kx)e_i\right),\\
& = \sum_{n\in \mathbb{N}_0} \sum_{k\in \mathbb{N}_0}\lambda_k \cos(nx)\cos(kx)
\left[\left(m_n^1(t)e_1+m_n^2(t)e_2+m_n^3(t)e_3\right)\times \left(m_k^1(t)e_1+m_k^2(t)e_2+m_k^3(t)e_3\right)\right],\\
& = \sum_{n\in \mathbb{N}_0} \sum_{k\in \mathbb{N}_0}\lambda_k \cos(nx)\cos(kx) \\
& \qquad
\left[\left(m_n^2(t)m_k^3(t)-m_n^3(t)m_k^2(t)\right)e_1
+\left(m_n^3(t)m_k^1(t)-m_n^1(t)m_k^3(t)\right)e_2
+ \left(m_n^1(t)m_k^2(t)-m_n^2(t)m_k^1(t)\right)e_3 \right].\\
\end{align*}
Similarly, we note that
\begin{align*}
&\M(t,x)\times(\M(t,x)\times \M_{xx}(t,x)) \\
& = \sum_{l\in \mathbb{N}_0}\sum_{n\in \mathbb{N}_0} \sum_{k\in \mathbb{N}_0}\lambda_k \cos(lx) \cos(nx)\cos(kx)  \\
& \qquad\qquad
\Big[\left\{m_l^2(t)\left(m_n^1(t)m_k^2(t)-m_n^2(t)m_k^1(t)\right)-m_l^3(t)\left(m_n^3(t)m_k^1(t)-m_n^1(t)m_k^3(t)\right)\right\}e_1 \\
& \qquad\qquad + \left\{m_l^3(t)\left(m_n^2(t)m_k^3(t)-m_n^3(t)m_k^2(t)\right)-m_l^1(t) \left(m_n^1(t)m_k^2(t)-m_n^2(t)m_k^1(t)\right)\right\}e_2 \\
& \qquad\qquad + \left\{m_l^1(t)\left(m_n^3(t)m_k^1(t)-m_n^1(t)m_k^3(t)\right) - m_l^2(t) \left(m_n^2(t)m_k^3(t)-m_n^3(t)m_k^2(t)\right)\right\}e_3 \Big]
\end{align*}
and
\begin{align*}
\M(t,x) \times \v(t,x)=\sum_{n\in\mathbb{N}_0}\sum_{j=1}^3\sum_{(k,l)\in \mathcal{K}^1}v_k^l(t)m_n^j(t)\varphi_i(x)\varphi_k(x)( e_j\times e_l).
\end{align*}
We use the following trigonometric identities
\begin{align*}
\int_0^{2\pi}\cos(nx)\cos(rx) dx & =\begin{cases}
                             2\pi,\,\, n=0=r, \\
                             \pi,\,\, n=r \neq 0,\\
                             0 ,\,\,\mbox{elsewhere},
                             \end{cases} \\
\int_0^{2\pi}\cos(nx)\cos(kx)\cos(rx)dx & \begin{cases}
                             2\pi, \,\, n+k=r=0, \\ 
                             \dfrac{\pi}{2},\,\, n+k=r \neq 0,\mbox{ or }  |n-k|=r\\
                             0 ,\,\,\mbox{elsewhere},
                             \end{cases} \\
\int_0^{2\pi}\cos(lx)\cos(nx)\cos(kx)\cos(rx)dx & =\begin{cases}
                             2\pi, \,\, n+k+l=r=0, \\ 
                             \dfrac{\pi}{4},\,\, n+k+l=r \neq 0, \mbox{ or } 
                             |n+k-l|=r,\mbox{ or } 
                             |n-k+l|=r,\\
                             \quad\mbox{ or } |k+l-n|=r, \\
                             0 ,\,\,\mbox{elsewhere}.
                             \end{cases} \\     
\end{align*}
Using the trigonometric identities above, we can write the control system \eqref{M1.N} as an infinite dimensional system of ordinary differential equations given below.
\begin{align}\label{inf.sys.o1}
\frac{d}{dt}m_i^1(t) & =\sum_{\substack{n+k=i,\\ |n-k|=i}}\lambda_k \left(m_n^2(t)m_k^3(t)-m_n^3(t)m_k^2(t)\right)\notag\\
& \quad - \sum_{\substack{n+k+l=i,\\ |n-k+l|=i,\\ |n+k-l|=i,\\|k+l-n|=i}}
\lambda_k \left\{m_l^2(t)\left(m_n^1(t)m_k^2(t)-m_n^2(t)m_k^1(t)\right)-m_l^3(t)\left(m_n^3(t)m_k^1(t)-m_n^1(t)m_k^3(t)\right)\right\}\notag\\
& \quad + \tilde{\mu}_1 \sum_{j=1}^3\sum_{\substack{n+k=i,\\|n-k|=i\\(k,l)\in \mathcal{K}^1}}v_k^l(t)m_n^j(t)\langle e_j\times e_l,e_1\rangle,
\end{align}
\begin{align}\label{inf.sys.o2}
\frac{d}{dt}m_i^2(t) & = \sum_{\substack{n+k=i,\\|n-k|=i}}\lambda_k \left(m_n^3(t)m_k^1(t)-m_n^1(t)m_k^3(t)\right)
\notag\\
& \quad - \sum_{\substack{n+k+l=i,\\|n-k+l|=i,\\ |n+k-l|=i,\\|k+l-n|=i}}
\lambda_k \left\{m_l^3(t)\left(m_n^2(t)m_k^3(t)-m_n^3(t)m_k^2(t)\right)-m_l^1(t) \left(m_n^1(t)m_k^2(t)-m_n^2(t)m_k^1(t)\right)\right\}\notag \\
& \quad + \tilde{\mu}_1 \sum_{j=1}^3\sum_{\substack{n+k=i,\\|n-k|=i\\(k,l)\in \mathcal{K}^1}}v_k^l(t)m_n^j(t)\langle e_j\times e_l,e_2\rangle,
\end{align}
and
\begin{align}\label{inf.sys.o3}
\frac{d}{dt}m_i^3(t) & = \sum_{\substack{n+k=i,\\|n-k|=i}}\lambda_k \left(m_n^1(t)m_k^2(t)-m_n^2(t)m_k^1(t)\right)
\notag\\
& \quad - \sum_{\substack{n+k+l=i,\\|n-k+l|=i,\\ |n+k-l|=i,\\|k+l-n|=i}}
\lambda_k  \left\{m_l^1(t)\left(m_n^3(t)m_k^1(t)-m_n^1(t)m_k^3(t)\right) - m_l^2(t) \left(m_n^2(t)m_k^3(t)-m_n^3(t)m_k^2(t)\right)\right\}\notag \\
& \quad + \tilde{\mu}_1\sum_{j=1}^3\sum_{\substack{n+k=i,\\|n-k|=i\\(k,l)\in \mathcal{K}^1}}v_k^l(t)m_n^j(t)\langle e_j\times e_l,e_3\rangle,
\end{align}
for $i,k,l,n\in\mathbb{N}_0.$


\section{Some Algebraic Identities}
 \label{app-A}
Here we list some algebraic identities regarding the scalar and vector products in $\mathbb{R}^3$ used in this paper. Assume that $a,b,c,d\in\mathbb{R}^3$. Then, the following identities hold
\begin{eqnarray} \label{eqn_A-1}
 a\times b&=&-b \times a,
\\
\label{eqn_A-2}
\lb a\times (b\times c),d\rb&=&\lb c,(d\times a)\times b\rb,
\\
\label{eqn_A-3}
\lb a\times b,c\rb&=&\lb b,c\times a\rb,
\\
\label{eqn_A-4}
\lb a\times b,b\rb&=&0, \\
\label{eqn_A-5}
 -\lb a\times b,c\rb&=&\lb b,a\times c\rb,\\
\label{eqn_A-6}
a\times(b\times c)&=&\lb a,c\rb b- \lb a,b\rb c,
\\
\label{eqn_A-7}
\vert a\times b\vert &\leq & \vert a\vert \vert b\vert.
\end{eqnarray}
In particular, if $\lb a,b\rb=0$, then $(a\times b)\times b= b \times(b\times a)=\lb b,a\rb b-\lb b,b\rb a=-\vert b\vert^2 a$ and
$a\times (a\times b)=\lb a,b\rb a-\lb a,a\rb b=-\vert a\vert^2 b$, i.e.
\begin{eqnarray}
\label{eqn_A-9}
(a\times b) \times b&=&-\vert b\vert^2 a, \, \text{if}\, \lb a,b\rb=0,
\\
\label{eqn_A-9b}
a\times (a\times b)&=& -\vert a\vert^2 b, \, \text{if}\, \lb a,b\rb=0.
\end{eqnarray}
Applying \eqref{eqn_A-3} and then \eqref{eqn_A-1} one can prove the following identity
\begin{eqnarray}
\lb a \times (a\times b),b\rb&=&-\vert a\times b\vert^2.
\end{eqnarray}

\end{appendix}


\nocite{*}


\begin{thebibliography}{1}
 \bibitem{AKSS}
 Agrachev, A., Kuksin, S., Sarychev, A., Shirikyan, A. (2007). On finite-dimensional projections of distributions for solutions of randomly forced 2D Navier-Stokes equations. {\em Ann. Inst. H. Poincaré Probab. Statist.} 43 {\bf 4} 399–415.

\bibitem{Agra_Sary.1}
Agrachev, A. A., Sarychev, A. V. (1987). On reduction of smooth control system, {\em Math. USSR Sbornik} {\bf 58} 15-30.

\bibitem{Agra_Sary.1.1}
Agrachev, A. A., Sarychev, A. V. (2001). {\em Lectures on Geometric Control Theory}, SISSA, Trieste.

\bibitem{Agra_Sary.2}
Agrachev, A. A., Sarychev, A. V. (2004). Control theory from the geometric view point, {\em Encyclopaedia of Mathematical Sciences}, {\bf 87}, Springer, xiv+412pp.


\bibitem{Agra_Sary.3}
 Agrachev, A. A., Sarychev, A. V. (2005). Navier-Stokes Equations: Controllability
by Means of Low Modes Forcing, {\em J. math. fluid mech.} {\bf 7}, 108-152. 

\bibitem{AB}
Alouges, F., Beauchard, K. (2009). Magnetization switching on small ferromagnetic ellipsoidal samples. {\em ESAIM - Control, Optimisation and Calculus of Variations,} 15 
{\bf 3} 676 - 711.


\bibitem{Applebaum}
Applebaum, D. (2009). {\em L\'{e}vy Processes and Stochastic Calculus},
Cambridge University Press, 2nd edition, xxx+460pp. 


\bibitem{BBP1}
Ba{\~n}as, L, Brze{\'z}niak, Z., Prohl, A. (2013). Computational studies for the stochastic Landau-Lifshitz-Gilbert equation.  {\em SIAM J. Sci. Comput.} 35 {\bf 1}, B62–B81.

\bibitem{BBNP1}
Ba{\~n}as, L; Brze{\'z}niak, Z.; Neklyudov, M.; Prohl, A., (2014). Stochastic ferromagnetism. Analysis and numerics. De Gruyter Studies in Mathematics, 58. De Gruyter, Berlin, 2014. vi+242pp.

\bibitem{Bens}
Bensoussan, A. (1971). Filtrage Optimal des Syst\`{e}mes Lin\'{e}aires, Dunod.

\bibitem{Ba}
Bass, R. F.. (1988). Probability estimates for multiparameter Brownian processes, {\em Ann. Probab.} {\bf 16} , no. 1, 251-264.

\bibitem{BoSi}
Boscain, U., Sigalotti, M. (2019). Introduction to Controllability
of Nonlinear Systems, Chapter 4, Contemporary Research in Elliptic PDEs and Related Topics, {\em Springer INdAM Series } 203-219.

\bibitem{Brown}
Brown, W. F. (1963). Thermal Fluctuations of a Single-Domain Particle, {\em Physical Review} {\bf 130} (5), 1677--1686.

\bibitem{BMM}
Brze{\'z}niak, Z., Manna, U., Mukherjee, D. (2019). Wong-zakai approximation for the stochastic landau-lifshitz-gilbert equations. {\em Journal of Differential Equations} { \bf 267} no.2, 776-825.


\bibitem{Car1}
Carbou, G., Labb\'{e}, S., (2012). Stabilization of walls for nano-wires of finite length. ESAIM, {\em Control Optim. Calc. Var.} (France), 18(1):1 - 21.

\bibitem{Car2}
Carbou, G., Labb\'{e}, S., Tr\'{e}lat, E., (2008). Control of travelling walls in a ferromagnetic
nanowire. {\em Discrete and Continuous Dynamical Systems. Series S,} 1(1):51–59.

\bibitem{Car3}
Carbou, G., Labb\'{e}, S., Tr\'{e}lat, E., (2009).  Smooth control of nanowires by means of a
magnetic field. {\em Communications on Pure and Applied Analysis,} 8(3):871–879

\bibitem{DapratoZ}
Da Prato, G., Zabczyk, J., (2014). Stochastic Equations in Infinite Dimensions, Cambridge University Press, 2nd Edition,
xvii+493pp.

\bibitem{DV}
 Donsker, M.D., Varadhan,  S. R. S., (1975). Asymptotic evaluation of certain Markov process
expectations for large time. I. II, {\em Comm. Pure Appl. Math.}, {\bf 28 } , 1-47; ibid. {\bf 28} , 279-301.


\bibitem{Gil}
 Gilbert, T. L. (1955). A Lagrangian formulation of the gyromagnetic equation of the magnetization field, {\em Phys.
Rev.}, {\bf 100}, 1243.

\bibitem{Gir}
Girsanov, I. V. (1960). On transforming a certain class of stochastic processes by
absolutely continuous substitution of measures; {\em Theory of Probability and
Applications} {\bf 5},  285-301.


\bibitem{Erika+Paul}
Hausenblas, E., Razafimandimby, P. A. (2015). Controllability and qualitative properties of the solutions to SPDEs driven by boundary Lévy noise. {\em Stoch. Partial Differ. Equ. Anal. Comput.} {3}{\bf 2}, 221--271.

\bibitem{IkedaW}
 Ikeda, N. and Watanabe, S. (1989). {\em Stochastic Differential Equations and Diffusion Processes.} Second edition, North-Holland/Kodansha, xvi+555pp.

\bibitem{Jurd}
Jurdjevic, V. (1997). Geometric Control Theory, {\em Cambridge Studies in Advanced
Mathematics} {\bf 51}, Cambridge University Press, xviii+492pp.

\bibitem{KaratS}
Karatzas, I. and Shreve, S. E.(1998). {\em Brownian Motion and Stochastic Calculus.} Second edition, Springer-Verlag, Graduate Text in Mathematics {\bf 113}, xxiii+470pp.

\bibitem{Koz}
Kozlov, S. M., (1977). Equivalence of measures for linear stochastic It\^{o} equations with partial derivatives, West. Moscow University, {\em Sov. Math. Mech.}, 4, 47-52 (in Russian).


\bibitem{KS1}
Kuksin, S., Shirikyan, A. (2001). Ergodicity for the randomly forced 2D Navier-Stokes equations. {\em Math. Phys. Anal. Geom.} 4 (2001), no. 2, 147-195.

\bibitem{Kuo}
Kuo, H.-H. (2006). Introduction to Stochastic Integration. Springer-Verlag New York. xiii+279 pp.



\bibitem{LanLif}
Landau, L., Lifshitz, E. (1935). On the theory of the dispersion of magnetic permeability in ferromagnetic bodies. {\em Phys. Z.
Sowjetunion } 8  153, reproduced in: Collected Papers of L.D. Landau, Pergamon Press, New York , 1965,
pp. 101-114.

\bibitem{LaValle}
LaValle S. M. (2006).
Planning Algorithms {\em}
Cambridge University Press, 2006, xvi+1007pp.



\bibitem{LL}
 Li, L. (2013).
 A study of stochastic Landau-Lifschitz equations, {\em PhD Thesis, University of York}, http://etheses.whiterose.ac.uk/id/eprint/5437.
 
\bibitem{LS} 
Li, W .V.,  Shao, Q.-M. (2001). Gaussian processes: inequalities, small ball probabilities and
applications, Stochastic processes: theory and methods, Handbook of Statist., {\bf 19},
North-Holland, Amsterdam,  533-597.

\bibitem{Lot}
Lototsky, S.V., (2017). Small ball probabilities for the infinite-dimensional Ornstein-Uhlenbeck process in Sobolev spaces, {\em Stoch. Partial Differ. Equ. Anal. Comput.} 5 {\bf 2},
192-219.

\bibitem{Noh}
 Noh, S. J.,  Miyamoto, Y., Okuda,  M., Hayashi,  N., Kim,  Y. K. (2012). Control of magnetic domains in co/pd multilayered nanowires with perpendicular magnetic anisotropy. volume 12, pp 428 - 432.

\bibitem{Oksendal}
$\varnothing$ksendal, B.(2003). {\em Stochastic Differential Equations.} 6th edition, Springer, xxvii+379pp .

\bibitem{Tal}
Talagrand M. (1994). The small ball problem for the Brownian sheet, {\em  Ann. Probab.} {\bf 22}  no. 3, 1331-1354.


\bibitem{Visintin_1985}
Visintin, A. (1985). On Landau-Lifshitz’ equations for ferromagnetism, {\em Japan J. Appl. Math.} {\bf 2} (1), 69--84.



\bibitem{Weiss_1907} 
Weiss, P. (1907).
 L'hypoth{\'e}se du champ moleculaire et la propriet{\'e}
ferromagn{\'e}tique, {\em J.~Phys.}~{\bf 6},  661--690.

\bibitem{Wies}
 Wieser, R., Vedmedenko,  E. Y., Wiesendanger, R. ( 2011). Indirect Control of Antiferromagnetic Domain Walls with Spin Current. {\em Physical Review Letters,} 106 {\bf 6}.

\end{thebibliography}
\end{document}